\tikzset{negated/.style={
        decoration={markings,
            mark= at position 0.5 with {
                \node[transform shape] (tempnode) {\tiny$\backslash$};
            }
        },
        postaction={decorate}
    }
}
\setlist{labelindent=0.5\parindent, leftmargin=*}
\newcommand{\ract}{\varrightarrow\mapsfromchar}
\DeclareFontFamily{U} {MnSymbolC}{}
\DeclareFontShape{U}{MnSymbolC}{m}{n}{
  <-6> MnSymbolC5
  <6-7> MnSymbolC6
  <7-8> MnSymbolC7
  <8-9> MnSymbolC8
  <9-10> MnSymbolC9
  <10-12> MnSymbolC10
  <12-> MnSymbolC12}{}
\DeclareFontShape{U}{MnSymbolC}{b}{n}{
  <-6> MnSymbolC-Bold5
  <6-7> MnSymbolC-Bold6
  <7-8> MnSymbolC-Bold7
  <8-9> MnSymbolC-Bold8
  <9-10> MnSymbolC-Bold9
  <10-12> MnSymbolC-Bold10
  <12-> MnSymbolC-Bold12}{}
\DeclareSymbolFont{MnSyC} {U} {MnSymbolC}{m}{n}
\DeclareMathSymbol{\medstar}{\mathbin}{MnSyC}{130}
\DeclareMathOperator{\fun}{Fun}
\DeclareMathOperator*{\colim}{colim}
\DeclareMathOperator{\seg}{Seg}
\newcommand{\spshf}[1]{\mathrm{s}\widehat{\mathbf{#1}}}
\newcommand{\freecyc}{\mathsf{C}}
\newcommand{\freemod}{\mathsf{M}}
\newcommand{\actrm}{\mathrm{act}}
\newcommand{\intrm}{\mathrm{int}}
\newcommand{\elrm}{\mathrm{el}}
\newcommand{\oprm}{\mathrm{op}}
\newcommand{\finset}{\mathbf{FinSet}}
\newcommand{\wproperad}{\mathbf{WPpd}}
\newcommand{\properad}{\mathbf{Ppd}}
\newcommand{\modular}{\mathbf{ModOp}}
\newcommand{\cyc}{\mathbf{Cyc}}
\newcommand{\augcyc}{\mathbf{Cyc}^+}
\newcommand{\diop}{\mathbf{DiOp}}
\newcommand{\operad}{\mathbf{Opd}}
\newcommand{\set}{\mathbf{Set}}
\newcommand{\cat}{\mathbf{Cat}}
\newcommand{\gcat}{\mathbf{U}}
\newcommand{\gcatemb}{\gcat_{\mathrm{int}}}
\newcommand{\gcatact}{\gcat_{\actrm}}
\newcommand{\gcatcyc}{\gcat_{\mathrm{cyc}}}
\newcommand{\gcatnought}{\gcat_{0}}
\newcommand{\opshf}{\mathfrak{o}}
\newcommand{\ocat}{\mathbf{O}}
\newcommand{\pgcat}{\mathbf{G}}
\newcommand{\pgcatsc}{\mathbf{O}_{0}}
\newcommand{\dendcat}{\mathbf{\Omega}}
\newcommand{\simpcat}{\mathbf{\Delta}}
\DeclareMathOperator{\id}{id}
\DeclareMathOperator{\nbhd}{nb}
\newcommand{\emb}{\operatorname{Emb}}
\newcommand{\inp}{\operatorname{in}}
\newcommand{\out}{\operatorname{out}}
\newcommand{\ssub}{\operatorname{sSb}}
\newcommand{\rat}{\rightarrowtail}
\newcommand{\vertfunc}{\textup{V}}
\newcommand{\mydef}[1]{\textbf{#1}}
\newcommand{\CC}{\mathbf{C}}
\newcommand{\DD}{\mathbf{D}}
\newcommand{\fstarop}{\mathbf{F}_\ast^\oprm}
\newcommand{\finsetstarop}{\finset_\ast^\oprm}
\newcommand{\lr}[1]{\langle #1 \rangle}
\newcommand{\PP}{\mathbf{P}}
\tikzset{
  act /.tip = >|
}
\newcommand{\hphi}{\hat\varphi}
\newtheorem{theorem}{Theorem}[section]
\newtheorem{lemma}[theorem]{Lemma}
\newtheorem{proposition}[theorem]{Proposition}
\newtheorem{corollary}[theorem]{Corollary}
\theoremstyle{definition}
\newtheorem{definition}[theorem]{Definition}
\newtheorem{construction}[theorem]{Construction}
\newtheorem{remark}[theorem]{Remark}
\newtheorem{example}[theorem]{Example}
\newtheorem{question}[theorem]{Question}
\newtheorem{slogan}[theorem]{Slogan}
\title{Segal conditions for generalized operads}
\author{Philip Hackney}
\address{Department of Mathematics, University of Louisiana at Lafayette, USA}
\email{philip@phck.net} 
\urladdr{http://phck.net}
\thanks{This work was supported by a grant from the Simons Foundation (\#850849)}
\subjclass[2020]
{18M85, 
18F20,
18M60, 
55P48, 
55U10, 
05C20} 
\begin{document}
\begin{abstract}
This note is an introduction to several generalizations of the \emph{dendroidal sets} of Moerdijk--Weiss.
Dendroidal sets are presheaves on a category of rooted trees, and here we consider indexing categories whose objects are other kinds of graphs with loose ends.
We examine the \emph{Segal condition} for presheaves on these graph categories, which is one way to identify those presheaves that are a certain kind of generalized operad (for instance wheeled properad or modular operad).
Several free / forgetful adjunctions between different kinds of generalized operads can be realized at the presheaf level using only the left Kan extension / restriction adjunction along a functor of graph categories.
These considerations also have bearing on homotopy-coherent versions of generalized operads, and we include some questions along these lines.
\end{abstract}

\maketitle

\section{Introduction}
A simplicial set $X$ is said to \emph{satisfy the Segal condition} if the maps
\[ \begin{tikzcd}[row sep=0, column sep=large]
X_2 \rar["{d_2, d_0}"] & X_1 \times_{X_0} X_1 \\ 
X_3 \rar["{d_2 d_3, d_0 d_3, d_0 d_0}"] & X_1 \times_{X_0} X_1 \times_{X_0} X_1 \\[-0.2cm] 
\vdots & \vdots \\
X_n \rar & X_1 \times_{X_0} X_1 \times_{X_0} \dots  \times_{X_0} X_1
\end{tikzcd} \]
are bijections for every $n \geq 2$.
Such simplicial sets are the same thing as \emph{categories}: $X_0$ is the set of objects, $X_1$ is the set of morphisms, and $d_1 \colon X_2 \to X_1$ provides a composition operation $X_1 \times_{X_0} X_1 \to X_1$ (see \cite[Proposition 4.1]{Grothendieck:TCTGA3}).
This situation can be fruitfully generalized to simplicial spaces by replacing bijections with weak equivalences, leading toward a model for $(\infty,1)$-categories due to Rezk \cite{Rezk:MHTHT}.
This approach was extended by Cisinski and Moerdijk to provide a model for $(\infty,1)$-operads in \cite{CisinskiMoerdijk:DSSIO}.

In this note, we discuss further extensions the Segal condition (in both the weak and strict sense) from categories and (colored) operads to other sorts of operadic structures.
Though generalizations of operads have appeared in many guises \cite{YauJohnson:FPAM,KaufmannWard:FC,BataninMarkl:OCDDC,BataninBerger:HTAPM,Getzler:OR}, we take here an example-driven approach.
Our focus will be on colored operads, dioperads/symmetric polycategories \cite{Gan:KDD,Garner:PPDL}, cyclic operads \cite{GetzlerKapranov:COCH}, properads \cite{Vallette:KDP,Duncan:TQC}, wheeled properads \cite{MarklMerkulovShadrin}, modular operads \cite{GetzlerKapranov:MO}, and props \cite{MacLane:CA}.
Note that the generalizations of operads in the above sources are usually focused on the monochrome case (or at least the case where morphisms fix the color set), while we consider the `category-like' version where operations are only composable if they have compatible colors/types.
This setting is useful for applications, see for instance \cite{FBSD:ComplexSystem,Spivak:OWD,VagnerSpivakLerman:AODSOWD}, and is formally clarifying.

Given a kind of generalized operad, the Segal condition is then formulated using some category of graphs; for categories, we can regard the simplicial category $\simpcat$ as a category of linear trees, and, for operads, the Moerdijk--Weiss dendroidal category $\dendcat$ is a category of rooted trees.
We will present appropriate graph categories for each of the kinds of operadic structures we are focusing on (with the exception of props; see \cref{question disconnected}).
Each such graph category has a natural notion of Segal presheaf.
Additionally, these graph categories are generalized Reedy categories in the sense of \cite{bm_reedy}, which has model-categorical implications for the homotopy-coherent situation.

This note is, in part, an expansion of the material from my talk ``Interactions between different kinds of generalized operads'' in the AMS \emph{Special Session on Higher Structures in Topology, Geometry and Physics} in March 2022, and in part a distillation of the core ideas from \cite{Hackney:CGOS}.
That paper introduced a new description of morphisms for several graph categories (most of which were introduced earlier by the author in joint work with Robertson and Yau), and was structured to allow efficient comparison with pre-existing definitions.
In contrast, the present paper is designed as an overview for someone coming to the subject for the first time. 
Our goals are to give basic definitions of graph categories in elementary terms, explain the Segal condition for presheaves, and discuss the connections between the directed and undirected contexts.
Pointers are provided to the relevant literature, and we highlight several interesting questions near the end.
For particular graph categories, there are other introductory accounts, each having a different focus: \cite{Weiss:FODS}, \cite{Moerdijk:LDS}, and the first part of \cite{HeutsMoerdijk:SDHT} are good starting points for the dendroidal theory (the book is a comprehensive account of the theory), \cite{MR3792530} is focused on $\infty$-properads, while the forthcoming \cite{BorghiRobertson:MIOGT} concerns modular $\infty$-operads.

\subsection{What is a generalized operad?}
This is a question we don't wish to answer too directly or precisely.
As a first approximation, `generalized operad' should include all of the mathematical objects in MSC2020 code \texttt{18M85},\footnote{\textbf{18M85} Polycategories/dioperads, properads, PROPs, cyclic operads, modular operads}
as well as some closely related objects.
At its core, a generalized operad should consist of a collection of \emph{operations}, each of which comes equipped with some boundary information (for instance coloring by some set, an `input' or `output' tag, or so on), as well as specified ways of connecting (compatible) boundaries of operations to form new operations.
See \cref{fig operations}.

\begin{figure}
\includegraphics{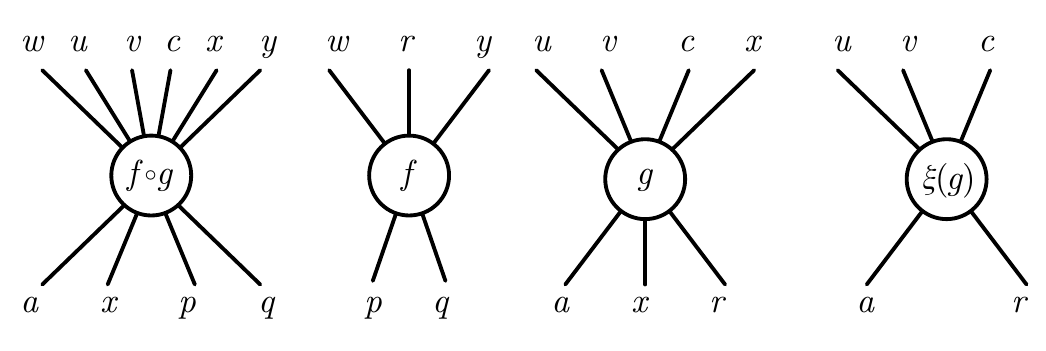}
\caption{Operations $f\colon w,r,y \to p,q$ and $g\colon u,v,c,x \to a,x,r$ along with composition $f\circ_r g$ and contraction $\xi_x(g)$.}
\label{fig operations}
\end{figure}

In a moment we'll give a brief list of several of the structures that will be discussed, but first let's note some general features.
Operad-like structures will always be \emph{colored} or \emph{typed} (by a set or involutive set), and, as with functors between categories, maps between them are not expected be the identity on color sets.
Each color will have an associated \emph{identity} operation.
We will always be discussing the \emph{symmetric} version of concepts, as for many structures (e.g., properads, modular operads) the non-symmetric version is quite different. 
Some structures will be \emph{directed} in the sense that there is a strong distinction between inputs and outputs of operations, while others will be \emph{undirected} and may be considered to just have `puts' (or `ports').

In the following list of examples, we'll begin with directed operadic structures, roughly in order of increasing complexity, before turning to undirected operadic structures.
Several references to precise definitions in the literature will be given just below the list.

\begin{description}[wide=0pt]
\item[Category] Morphisms in a category are of the form $f\colon c\to d$.
Given two morphisms $f$ and $g$, there are at most two ways to compose the morphisms: $g\circ f$ and $f\circ g$, provided they exist.
\item[Operad] An operad is a generalization of a category, where operations now can have multiple (or no) inputs, they are of the form $f\colon c_1, \dots, c_n \to d$.
There are compositions
\[
	P(c_1, \dots, c_n; d) \times P(d_1, \dots, d_m; c_i) \xrightarrow{\circ_i} P(c_1, \dots, c_{i-1}, d_1, \dots, d_m, c_{i+1}, \dots, c_n; d)
\]
which may give many ways to compose two given operations.
\item[Dioperad] A dioperad, also called a polycategory, expands operads to allow multiple outputs as well. 
To compose two operations, one chooses a single input of the first which agrees with a single input of the second, and connects them.
The relevant structure map when $c_i = b_j$ is $\vphantom{\circ}_i\circ_j$ below.
\[
\begin{tikzcd}[sep=small]
	P(c_1, \dots, c_n; d_1, \dots, d_m) \times P(a_1, \dots, a_k;b_1, \dots, b_\ell) \dar{ \vphantom{\circ}_i\circ_j} \\
	 P(c_1, \dots, c_{i-1}, a_1, \dots, a_k, c_{i+1}, \dots, c_n; b_1, \dots, b_{j-1}, d_1, \dots, d_m , b_{j+1}, \dots, b_\ell )
\end{tikzcd}
\]
\item[Properads] Arguably this is the most complicated structure on this list. 
The underlying operation sets are the same as those of dioperads, but now one can connect one \emph{or more} inputs of the first operation to the same number of outputs of the second operation.
\item[Props] These are like properads, but now one can connect \emph{zero or more} inputs of the first operation to the same number of inputs of the second operation. 
More simply, props are just those symmetric monoidal categories whose set of objects is a free monoid under tensor.
The set of colors is the free generating set for the monoid of objects.
\item[Wheeled properads]
A wheeled properad has additional \emph{contractions} that act on a single operation. They take the form
\[
	P(c_1, \dots, c_n; d_1, \dots, d_m) \to P(c_1, \dots, \hat c_i, \dots, c_n; d_1, \dots, \hat d_j, \dots, d_m)
\]
where $c_i = d_j$.
The properadic compositions in a wheeled properad all decompose into simpler ones.
Namely, a composition connecting $k > 0$ inputs of one operation to $k$ outputs of another composes as a dioperadic composition, followed by $k-1$ contractions.
Wheeled properads could instead be called `wheeled dioperads.'
\item[Involutive category] A \emph{dagger category} is a category $C$ equipped with an identity-on-objects functor $C \to C^\oprm$ so that $C \to C^\oprm \to (C^{\oprm})^{\oprm} = C$ is $\id_C$. 
That is, every $f \colon c\to d$ has an associated $f^\dagger \colon d \to c$ (with $(f^\dagger)^\dagger = f$), and this allows you to exchange the role of input and output of morphisms.
More generally, an \emph{involutive category} relaxes the requirement that $C \to C^\oprm$ is the identity-on-objects, so every $f\colon c \to d$ has an associated $f^\dagger \colon d^\dagger \to c^\dagger$.
\item[Cyclic operad] A cyclic operad $P$ is an operad equipped with an involution $\dagger$ on its set of colors, along with additional structure maps
\[
	P(c_1, \dots, c_n; c_0) \xrightarrow{\cong} P(c_2, \dots, c_n, c_0^\dagger; c_1^\dagger)
\]
that combine into a $\mathbb{Z}/(n+1)\mathbb{Z}$-action on operations of arity $n$. 
This allows one to exchange the roles of inputs and outputs.
If the underlying operad $P$ is just a category, this notion coincides with that of being an involutive category.
\item[Augmented cyclic operad] If one can exchange the roles of inputs and outputs, is there any real distinction between the two?
Operations in an augmented cyclic operad dispense with this, and regard inputs and outputs on entirely the same footing.
Operation sets take the form $P(c_1, \dots, c_n)$ (for $n\geq 0$), and compositions take the form
\[
\begin{tikzcd}[sep=small]
	P(c_1, \dots, c_n) \times P(d_1, \dots, d_m) \dar{ \vphantom{\circ}_i\circ_j} \\
	 P(c_1, \dots, c_{i-1}, d_{j+1}, \dots, d_{m}, d_0, \dots, d_{j-1}, c_{i+1}, \dots, c_n)
\end{tikzcd}
\]
whenever $c_i = d_j^\dagger$.
The real difference is that in a cyclic operad you cannot compose two operations which only have outputs but no inputs, while in an augmented cyclic operad this is possible, and the result will land in the set $P(\,\,)$ which has no analogue for cyclic operads.
\item[Modular operads]
These are augmented cyclic operads, together with contraction operations
\[
	P(c_1, \dots, c_n) \to P(c_1, \dots, \hat c_i, \dots, \hat c_j, \dots, c_n)
\]
for $i < j$ with $c_i = c_j^\dagger$.
\end{description}

Definitions in these terms for the directed structures may be found, for instance, in Chapter 11 of \cite{YauJohnson:FPAM}; see also \cite[\S2.2]{GarnerHirschowitz:SMAF}, \cite[\S6.1]{Duncan:TQC}, and \cite{HackneyRobertson:OCP}.
For cyclic operads see \cite[Definition 7.4]{Shulman:2Chu} and \cite[Definition 3.3]{ChengGurskiRiehl:CMMAM}, for augmented cyclic operads see \cite[Definition 2.3]{DrummondColeHackney:DKHTCO}, and for modular operads see \cite[Definition 1.24]{Raynor:DLCSM}.
But please don't go and look all of this up: according to the contents of \cref{sec segal}, many of these operadic structures could instead be defined as \emph{Segal presheaves} over the graph categories we introduce here, just as in the case of categories.
There are of course a number of other interesting and useful operadic structures that do not appear on the above list, some of which can even fit into the frameworks we lay out below.

\section{Graphs}\label{sec: graphs}

In operadic contexts, the correct notion of graphs to use is one with `loose ends.' 
There are a number of different combinatorial models for this, and we now recall one that has a simple associated notion of morphism, due to Joyal and Kock \cite[\S3]{JoyalKock:FGNTCSM}. 
\begin{definition}[Undirected graphs]\label{def jk graphs}
An \mydef{undirected graph} $G$ is a diagram of finite sets
\[\begin{tikzcd}[column sep=small] A_G \arrow[loop left,"\dagger"] & \lar[hook'] D_G \rar{t} & V_G \end{tikzcd}\]
with $\dagger$ a fixpoint-free involution and $D_G \to A_G$ a monomorphism.
We call $A_G$ the set of \mydef{arcs} and $V_G$ the set of \mydef{vertices}.
\end{definition}
A corresponding encoding of \emph{directed} graph with loose ends was introduced by Kock as Definition 1.1.1 of \cite{Kock:GHP}.
(Graphs of this type have recently been used to give new perspectives on Petri nets \cite{BaezGenoveseMasterShulman,Kock:WGPNP}.)

\begin{definition}[Directed graphs]\label{def dir graph}
A \mydef{directed graph} $G$ is a diagram of finite sets of the form
\[
\begin{tikzcd}[column sep=small]
	E_G & I_G \rar\lar[hook'] & V_G & O_G \rar[hook] \lar & E_G,
\end{tikzcd}
\]
where the two end maps are monomorphisms.
We call elements of $E_G$ the \mydef{edges} of the graph.
\end{definition}

Every directed graph determines a canonical associated undirected graph, with $A_G \coloneqq E_G \amalg E_G$ together with the swap involution, and $D_G \coloneqq I_G \amalg O_G$.
We will give important examples in a moment, but for now let us introduce some subsidiary notions.

\begin{definition} Suppose $G$ is an undirected graph.
\begin{itemize}
\item
If $v\in V_G$ is a vertex, then $\nbhd(v) \coloneqq t^{-1}(v) \subseteq D_G$ is the \mydef{neighborhood} of $v$.
	\item 
The \mydef{boundary} of $G$ is the set $\eth(G) = A_G \backslash D_G$.
\item 
The set of \mydef{edges} is the set of $\dagger$-orbits.
\[
	E_G = \{ [a,a^\dagger] \mid a \in A_G\} \cong A_G / {\sim}
\]
\item
An edge $[a,a^\dagger]$ is an \mydef{internal edge} if neither of $a,a^\dagger$ are elements of $\eth(G)$. 
\end{itemize}
Suppose $G$ is a directed graph.
\begin{itemize}
\item If $v\in V_G$ is a vertex, then $\inp(v) \subseteq I_G$ is the preimage of $v$ under $I_G \to V_G$, called the set of \mydef{inputs of} $v$. Likewise, $v$ has a set of \mydef{outputs} $\out(v) \subseteq O_G$.
\item The set of \mydef{input edges of} $G$ is defined to be $\inp(G) \coloneqq E_G \backslash O_G$ and the set of \mydef{output edges of} $G$ is $\out(G) \coloneqq E_G \backslash I_G$. 
All other edges are \textbf{internal edges}.
\end{itemize}
\end{definition}

Each undirected graph $G$ determines a topological pair $\mathscr{X}_G = (X_G, V_G)$, called the \mydef{geometric realization}. Each arc in $A_G$ gets an open interval, each element of $D_G$ gets a half-open interval, each vertex gets a point, and these are glued together in the manner described by the diagram for $G$ (e.g., if $t\in (0,1)$ is in the interval associated to $a$, then it will be identified with $1-t\in (0,1)$ in the interval associated to $a^\dagger$).
We mention this mostly in that we will draw pictures of our graphs, and we will frequently exaggerate the size of the vertices. 
We freely use the terms `connected' and `simply-connected' to refer to properties of the topological space $X_G$ (though these can also be described combinatorially). 

\begin{example}\label{example undirected graph}
\Cref{fig graph example} gives a picture of an undirected graph $G$ with loose ends.
The arc set $A_G = \{n,n^\dagger \mid 1 \leq n \leq 9\}$ has eighteen elements, together with the evident involution swapping $n$ and $n^\dagger$, while the vertex set is $V_G = \{ u, v, w, x \}$.
The set $D_G$ is given below
\[
D_G = 
\{ 
\underbrace{1^\dagger}_{\nbhd(u)}, 
\underbrace{3^\dagger, 4, 6^\dagger, 8, 7}_{\nbhd(v)},
\underbrace{4^\dagger, 5, 5^\dagger, 6}_{\nbhd(w)},
\underbrace{8^\dagger, 9}_{\nbhd(x)}
 \}
\]
and the indicated partition specifies the function $t\colon D_G \to V_G$.
The boundary set is $\eth(G) = \{1, 2, 2^\dagger, 3, 7^\dagger, 9^\dagger \}$, which has six elements.
\end{example}

\begin{figure}
\labellist
\small\hair 2pt
 \pinlabel {$1$} [ ] at 24 118
 \pinlabel {$1^\dagger$} [ ] at 59 94
 \pinlabel {$2$} [ ] at 72 138
 \pinlabel {$2^\dagger$} [ ] at 130 140
 \pinlabel {$3$} [ ] at 112 87
 \pinlabel {$3^\dagger$} [ ] at 143 89
 \pinlabel {$4$} [ ] at 157 101
 \pinlabel {$4^\dagger$} [ ] at 193 146
 \pinlabel {$5$} [ ] at 221 160
 \pinlabel {$5^\dagger$} [ ] at 242 142
 \pinlabel {$6$} [ ] at 227 114
 \pinlabel {$6^\dagger$} [ ] at 187 92
 \pinlabel {$7$} [ ] at 148 59
 \pinlabel {$7^\dagger$} [ ] at 138 33
 \pinlabel {$8$} [ ] at 174 56
 \pinlabel {$8^\dagger$} [ ] at 196 33
 \pinlabel {$9$} [ ] at 241 39
 \pinlabel {$9^\dagger$} [ ] at 260 60
 \pinlabel {$u$} [ ] at 64.5 68.5
 \pinlabel {$v$} [ ] at 164 77
 \pinlabel {$w$} [ ] at 215 136
 \pinlabel {$x$} [ ] at 221 26
\endlabellist
\centering
\includegraphics[scale=0.5]{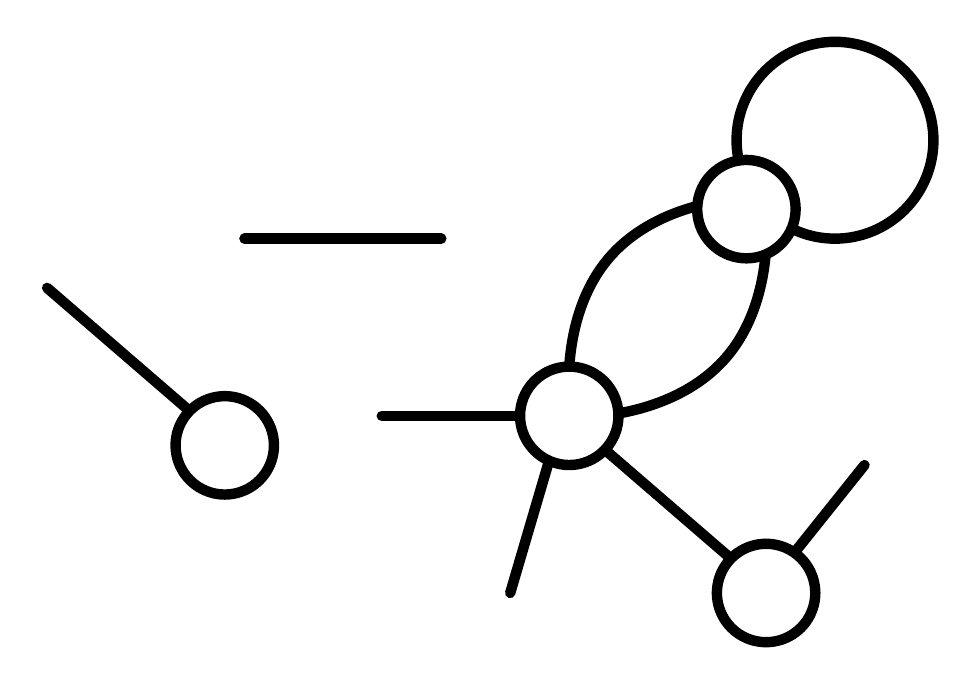}
\caption{An undirected graph from \cref{example undirected graph}}
\label{fig graph example}
\end{figure}

\begin{remark}
A different combinatorial definition of (undirected) graph with loose ends is common in the literature (see, e.g., \cite[6.6.1]{KontsevichManin:GWCQCEG} or \cite[Definition 1.1]{BehrendManin:SSMGWI}).
This is simply a diagram of finite sets
\[\begin{tikzcd}[column sep=small] F \arrow[loop left] \rar & V \end{tikzcd}\]
where the self-map of $F$ is an involution.
The fixed points of the involution represent the loose ends of the graph, and the free orbits represent edges between vertices.
Given such a graph, we can form a graph as in \cref{def jk graphs} by setting $D \coloneqq F$, and to give $A$ we double up the fixed points of the involution on $F$ to turn it into a free involution.
However, not all undirected graphs arise from this process: we never see graphs which have edges that float free. 
See \cite[\S15.3]{BataninBerger:HTAPM} for more details.
\end{remark}

\begin{figure}
\includegraphics[scale=0.3]{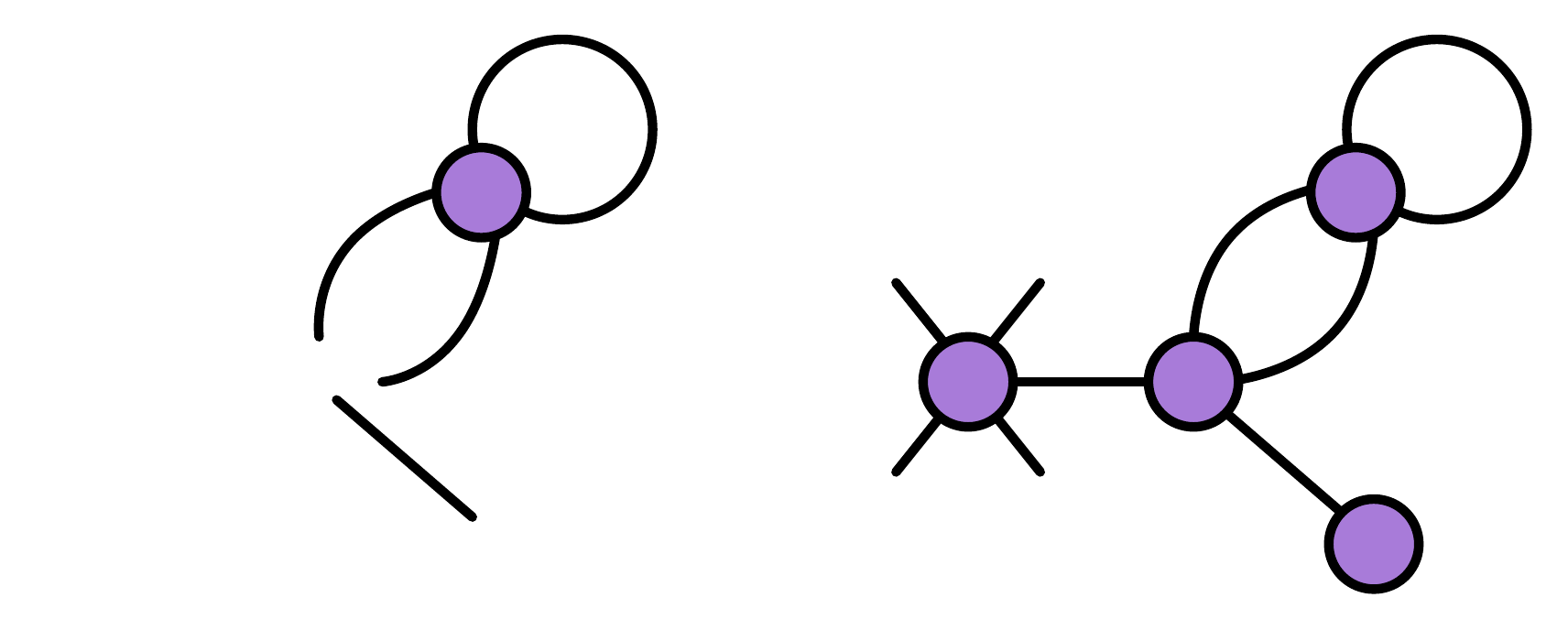}
\caption{Subgraph determined by one vertex and four edges}\label{fig subgraph}
\end{figure}

\begin{definition}[Subgraph]\label{def subgraph}
Suppose $G$ is a graph, either directed or undirected.
A \mydef{subgraph} of $G$ is determined by a pair of subsets $E_H \subseteq E_G$ and $V_H \subseteq V_G$ subject to a single condition.
\begin{itemize}
\item Suppose $G$ is undirected, and let $A_H \subseteq A_G$ denote the set of all arcs appearing in the edges in $E_H$; this is naturally an involutive subset of $A_G$.
Further, let $D_H \coloneqq t^{-1}(V_H) \subseteq D_G$ be the set of those elements mapping into $V_H$.
Then $(E_H,V_H)$ determines a subgraph just when $D_H \to D_G \to A_G$ lands in the subset $A_H$.
\begin{equation}\label{eq undirected subgraph} \begin{tikzcd}
A_H\dar[hook]  \arrow[loop left,"\dagger"] & \lar[hook', dashed] D_H\dar[hook]  \rar & V_H\dar[hook]  \\
A_G \arrow[loop left,"\dagger"] & \lar[hook'] D_G \rar & V_G
\end{tikzcd} \end{equation}
\item Suppose $G$ is directed, and let $I_H \subseteq I_G$ and $O_H \subseteq O_G$ be the preimages of the subset $V_H$ along the functions $I_G \to V_G$ and $V_G \leftarrow O_G$. 
Then $(E_H,V_H)$ determines just when the dashed arrows in the following diagram exist.
\begin{equation}\label{eq directed subgraph}
\begin{tikzcd}
E_H  \dar[hook] & I_H \rar\lar[hook', dashed]  \dar[hook]  & V_H  \dar[hook]& O_H \rar[hook, dashed] \lar  \dar[hook] & E_H \dar[hook] \\
E_G & I_G \rar\lar[hook'] & V_G & O_G \rar[hook] \lar & E_G	
\end{tikzcd}
\end{equation}
\end{itemize}
The top row in \eqref{eq undirected subgraph} or \eqref{eq directed subgraph} exhibits $H$ as an undirected or directed graph.
\end{definition}
Of course we could write these requirements in other ways, such as declaring that a subgraph is a triple or quadruple of subsets satisfying some properties.
But we prefer to emphasize that subgraphs are always determined by a set of edges and a set of vertices.
Notice also that a subgraph of a directed graph is nothing but a subgraph of its underlying undirected graph. 

We now describe two kinds of important graphs, which we term \mydef{elementary}.
For a non-negative integer $n$, we will write $\mathbf{n}$ for the set $\{1,2,\dots,n\}$.

\begin{example}[Edge]\label{ex: edge}
The canonical \mydef{undirected edge}, here denoted $\updownarrow$, is 
\[\begin{tikzcd}[column sep=small] \mathbf{2} \arrow[loop left,"\dagger"] & \lar[hook'] \mathbf{0} \rar{t} & \mathbf{0}. \end{tikzcd}\]
That is, this graph has no vertices, and all arcs are in the boundary.
The canonical \mydef{directed edge}, here denoted $\downarrow$, is 
\[
\begin{tikzcd}[column sep=small]
	\mathbf{1} & \mathbf{0} \rar\lar[hook'] & \mathbf{0} & \mathbf{0} \rar[hook] \lar & \mathbf{1}.
\end{tikzcd}
\]
This graph has no vertices, and a single edge which is both an input and an output of the graph.
We also refer to any connected graph without vertices as an edge.
\end{example}

\begin{example}[Stars or corollas]\label{ex: star}
Let $n,m \geq 0$ be integers.
The $n$-star, $\medstar_n$, is the following undirected graph
\[\begin{tikzcd}[column sep=small] \mathbf{n}+\mathbf{n} \arrow[loop left,"\dagger"] & \lar[hook'] \mathbf{n} \rar{t} & \mathbf{1}, \end{tikzcd}\]
which has a single vertex with an $n$-element neighborhood, and $\eth(\medstar_n) = \mathbf{n}$.
The $n,m$-star, $\medstar_{n,m}$, is the following \emph{directed} graph
\[
\begin{tikzcd}[column sep=small]
	\mathbf{n}+\mathbf{m} & \mathbf{n} \rar\lar[hook'] & \mathbf{1} & \mathbf{m} \rar[hook] \lar & \mathbf{n}+\mathbf{m}.
\end{tikzcd}
\]
The star $\medstar_{n,m}$ has a single vertex with $n$ inputs and $m$ outputs, and $\inp(\medstar_{n,m}) = \mathbf{n}$, $\out(\medstar_{n,m}) = \mathbf{m}$.
We will also refer to any connected graph with one vertex and no internal edges as a star.
\end{example}

Every edge and star (including graphs isomorphic to these canonical forms) will be called \mydef{elementary}.
The elementary graphs are precisely those graphs with no internal edges.
Two examples of stars are given in \cref{fig star example}; we will always follow the convention that in a directed graph, the flow goes from top to bottom (in graphs with loops, this rule will apply locally at each vertex).

\begin{figure}
\includegraphics[scale=0.3]{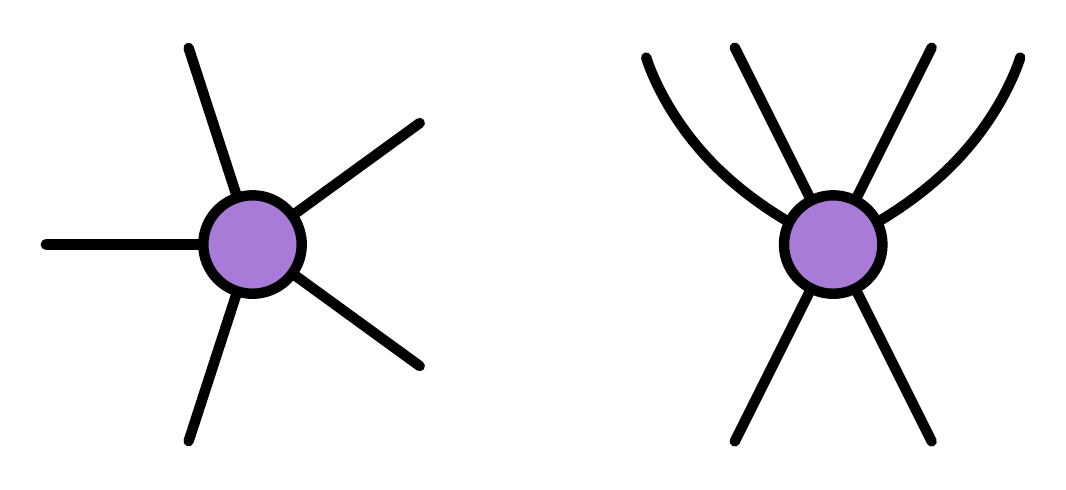}
\caption{The stars $\medstar_5$ and $\medstar_{4,2}$}\label{fig star example}
\end{figure}

If $e\in E_G$ is any edge in a graph $G$, we may consider $e$ as a subgraph as in \cref{def subgraph}.
But vertices in a graph need \emph{not} span a star-shaped subgraph in the same way. 
For example, in \cref{fig graph example}, the smallest subgraph containing the vertex $w$ has one vertex and arc set $\{4,4^\dagger,6,6^\dagger,5,5^\dagger\}$, and is not a star (the other three vertices all span star-shaped subgraphs).
If $v$ is a vertex in a graph $G$, we will write $\medstar_v$ for a star with $D_{\medstar_v} \coloneqq \nbhd(v)$ (when $G$ is undirected) or $I_{\medstar_v} \coloneqq \inp(v)$ and $O_{\medstar_v} \coloneqq \out(v)$ (when $G$ is directed).
Despite the fact that $\medstar_v$ might not be a subgraph of $G$, there is always a map (see \cref{def etale} below) $\iota_v \colon \medstar_v \to G$.

\begin{example}[Linear graphs]\label{ex linear graphs}
The linear directed graph $L_n$ with $n$ vertices (see \cref{fig linear graphs}) is presented as follows.
\[
\begin{tikzcd}
	\{0, 1, \dots, n\} & \mathbf{n} \rar{\id} \lar[hook',"-1"'] & \mathbf{n} & \mathbf{n} \rar[hook,"+0"] \lar["\id"'] & \{0, 1, \dots, n\}
\end{tikzcd}
\]
We have $\inp(L_n) = \{0\}$, $\out(L_n) = \{n\}$, while the vertex $k$ has $\inp(k) = \{k-1\}$ and $\out(k) = \{k\}$.
We also call the associated undirected graph a linear graph.
\end{example}

\begin{figure}
\labellist
\small\hair 2pt
\pinlabel $0$ [l] at 240 254
\pinlabel $1$ [l] at 240 197
\pinlabel $2$ [l] at 240 140
\pinlabel $3$ [l] at 240 83
\pinlabel $4$ [l] at 240 26
\pinlabel $1$ [l] at 255 225.5
\pinlabel $2$ [l] at 255 168.5
\pinlabel $3$ [l] at 255 111.5
\pinlabel $4$ [l] at 255 54.5
\endlabellist
\centering
\includegraphics[scale=0.3]{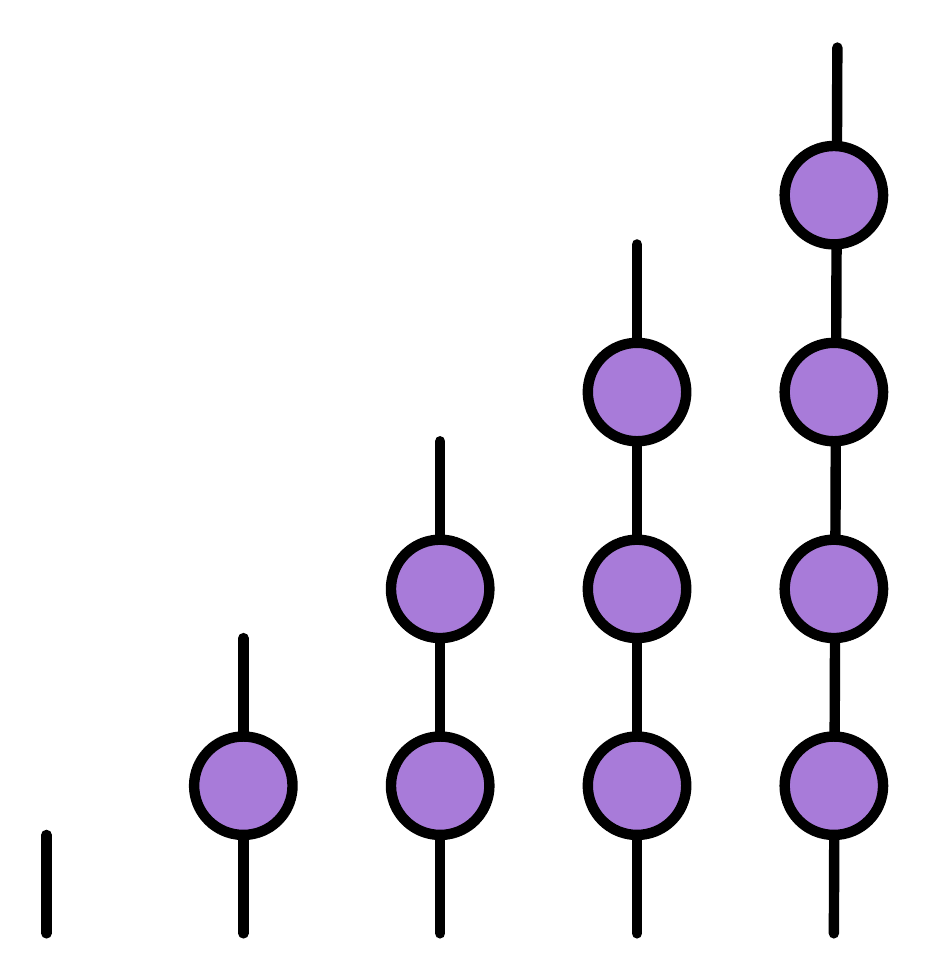}
\caption{The first five linear graphs $L_0, L_1, L_2, L_3, L_4$}
\label{fig linear graphs}
\end{figure}

\begin{definition}[paths, cycles, trees]\label{def paths trees} Let $G$ be an undirected graph.
\begin{itemize}
\item A \mydef{weak path} in $G$ is a finite alternating sequence of edges and vertices of $G$ so that if $e$ and $v$ are adjacent then some arc of $e$ appears in $\nbhd(v)$, and so that the pattern $vev$ can only appear in the path if \emph{both} arcs of $e$ are in $\nbhd(v)$.
\item A \mydef{path} in $G$ is a finite alternating sequence of arcs and vertices of $G$ so that if $av$ appears in the sequence then $t(a) = v$, and if $va$ appears in the sequence then $t(a^\dagger) = v$.
\item A \mydef{cycle} is a path of length strictly greater than one that begins and ends at the same arc or same vertex.
\item The graph $G$ is a \mydef{tree} if and only if it is connected and does not have any paths which are cycles.
\end{itemize}
\end{definition}

Each path gives a weak path by replacing the arcs with the edges they span, and each weak path has at least one associated path. 
The only ambiguity is how to handle the patterns of the form $vev$. 
In a tree, there is no real difference between paths and weak paths.
The property of being connected is equivalent to having a weak path between every pair of distinct elements of $E_G \amalg V_G$. 

\begin{definition}\label{def directed paths acyclic} Let $G$ be a directed graph.
\begin{itemize}
	\item An \mydef{undirected path} in $G$ is a weak path in the associated undirected graph.
	\item A \mydef{directed path} in $G$ is a weak path in the associated undirected graph, so that the following hold:
	\begin{itemize}
		\item If $ev$ appears in the path, then $e \in \inp(v)$.
		\item If $ve$ appears in the path, then $e \in \out(v)$.
	\end{itemize}
	\item A \mydef{directed cycle} is a directed path of length strictly greater than one that begins and ends at the same edge or vertex.
	\item A directed graph $G$ is \mydef{acyclic} if it is connected  and does not contain any directed cycles.
\end{itemize}
\end{definition}

\subsection{\'Etale maps and embeddings}
\'Etale maps are certain functions between graphs that preserve structure and preserve arity of vertices (that is, the cardinality of $\nbhd(v)$ and $\nbhd(f(v))$ coincide, and similarly for the directed case).
The following definitions appear in \cite{JoyalKock:FGNTCSM} and \cite[1.1.7]{Kock:GHP}.

\begin{definition}\label{def etale}
An \mydef{\'etale map} $G \to G'$ between undirected graphs is a triple of functions $A_G \to A_{G'}$, $D_G \to D_{G'}$, and $V_G\to V_{G'}$ so that the diagram below left commutes with right hand square a pullback.
\[ \begin{tikzcd}[sep=small]
A_G \dar & A_G\dar  \lar["\dagger"'] & \lar[hook'] D_G\dar \ar[dr, phantom, "\lrcorner" very near start]  \rar & V_G\dar  
&[+1cm] 
E_G  \dar & I_G \rar\lar[hook']  \dar \ar[dr, phantom, "\lrcorner" very near start] & V_G  \dar& O_G \rar[hook] \lar  \dar \ar[dl, phantom, "\llcorner" very near start]& E_G \dar
\\ 
A_{G'} & A_{G'} \lar["\dagger'"'] & \lar[hook'] D_{G'} \rar & V_{G'}
&[+1cm] 
E_{G'} & I_{G'} \rar\lar[hook'] & V_{G'} & O_{G'} \rar[hook] \lar & E_{G'}	
\end{tikzcd} \]
Similarly, an \'etale map $G\to G'$ between \emph{directed} graphs is a quadruple of functions $E_G \to E_{G'}$, $I_G\to I_{G'}$, $O_G\to O_{G'}$, and $V_G\to V_{G'}$ so that the diagram above right commutes and both middle squares are pullbacks.
\end{definition}

\'Etale maps induce local homeomorphisms between geometric realizations.
If $G$ is an undirected graph, then the set of \'etale maps ${\updownarrow} \to G$ is in bijection with $A_G$; likewise if $G$ is directed then $\{ {\downarrow} \to G \} \cong E_G$.
Similarly, \'etale maps from stars (modulo isomorphism in the domain) classify vertices of a graph.
Every \'etale map between directed graphs induces an \'etale map between the associated undirected graphs.
It is immediate that every subgraph in the sense of \cref{def subgraph} determines an \'etale map.
But the latter are strictly more general.

\begin{figure}
\labellist
\small\hair 2pt
\pinlabel $1$ at 2 102
\pinlabel $3$ at 160 142
\pinlabel $2$ at 50 127
\pinlabel $4$ at 115 127
\pinlabel $\rightarrow$ at 187 102
\pinlabel $1$ at 220 142
\pinlabel $2$ at 290 142
\pinlabel $\leftarrow$ at 316 102
\pinlabel $1$ at 348 142
\pinlabel $2$ at 418 142
\pinlabel $4$ at 416 65
\pinlabel $H$ at 82.5 -10
\pinlabel $G$ at 255 -10
\pinlabel $K$ at 383 -10
\endlabellist
\centering
\includegraphics[scale=0.3]{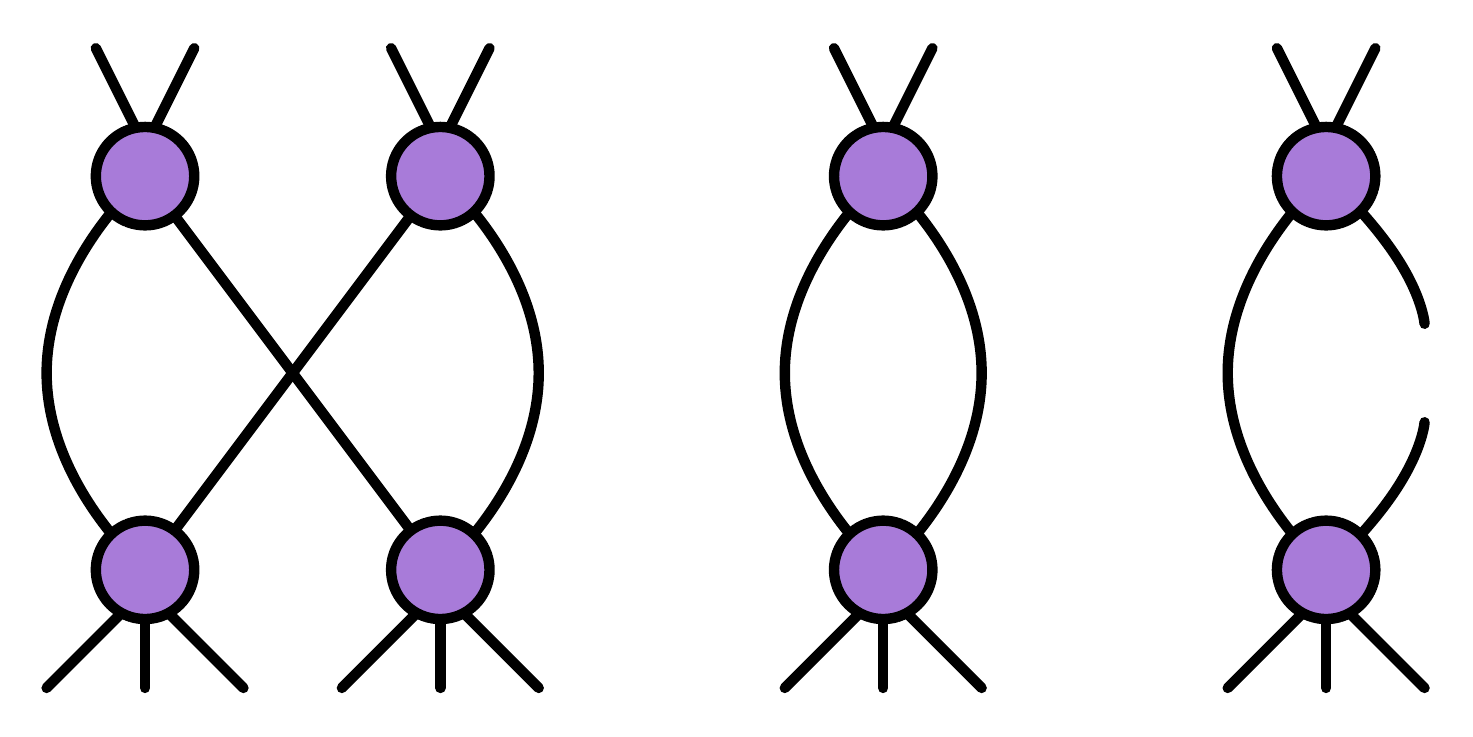}
\caption{Example (directed) \'etale maps $H \rightarrow G \leftarrow K$}
\label{fig dir etale}
\end{figure}

\begin{example}\label{ex dir etale}
In this example, we will refer to two \'etale maps $H  \rightarrow G \leftarrow K$ between directed graphs appearing in \cref{fig dir etale}.
Following our convention, the graphs have vertices with two inputs and two outputs, and also vertices with two inputs and three outputs.
The edges $2$ and $4$ in $H$ do not touch, and this graph is only drawn this way to make the \'etale map easier to see.
Since $G$ has only one $(2,2)$ vertex and one $(2,3)$ vertex, the actions of the maps on vertices are forced.
The actions on the labelled middle edges are given by preservation of parity, and the action on the inputs and outputs of the graphs are the visually expected ones.
We could likewise consider these as \'etale maps of undirected graphs.
These examples show that \'etale maps do not need to be injective on vertices, and even if they are, they do not need to be injective on edges or arcs.
\end{example}

\begin{remark}\label{remark where naive map would be helpful}
\'Etale maps between undirected graphs take paths to paths (one does not even need the pullback condition for this).
It follows that $G \to G'$ is an \'etale map between undirected graphs with $G'$ a tree, then each connected component of $G$ is a tree as well.
Likewise, since \'etale maps between directed graphs preserve directed paths, if $G \to G'$ is such a map with $G'$ acyclic, then each connected component of $G$ is acyclic as well. 
Of course in this case if $G'$ is simply-connected, then so is each component of $G$, by passing to the associated undirected graphs.
\end{remark}

We are mainly interested in connected graphs in what follows, and we make the following definition.
\begin{definition}\label{def embedding}
An \mydef{embedding} between connected graphs is an \'etale map which is injective on vertices.
We denote an embedding by $G \rat G'$.
\end{definition}

Since monomorphisms are stable under pullback, if $G \rat G'$ is an embedding between undirected graphs, then $D_G \to D_G'$ is a monomorphism (and likewise for $I_G \to I_{G'}$ and $O_G \to O_{G'}$ in the directed case).
Embeddings between connected graphs yield injective local homeomorphisms between geometric realizations.
Every subgraph inclusion in the sense of \cref{def subgraph} is an embedding, but embeddings are strictly more general. 
For instance, \cref{fig embedding} indicates two consecutive embeddings (from left to right), where the second indicates the kind of clutching behavior possible in embeddings, which can identify some arcs.
The first is simply a subgraph inclusion, while the second is a bijection on $D$ and $V$, but on arcs is not a monomorphism.

\begin{figure}
\includegraphics[scale=0.3]{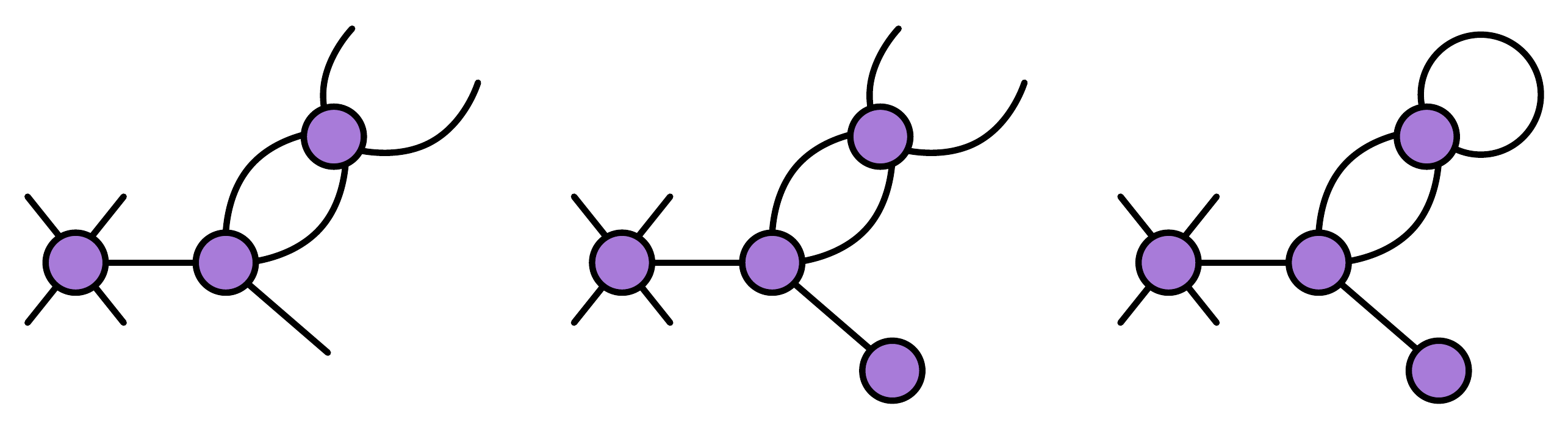}
\caption{Two embeddings}\label{fig embedding}
\end{figure}

Any \'etale map between trees is a monomorphism (see, for instance, \cite[Corollary 4.23]{Raynor:DLCSM}), hence an embedding. 
For acyclic directed graphs the situation is more complicated: there can be \'etale maps between such which are not embeddings, and embeddings which are not injective, as we saw in \cref{ex dir etale} / \cref{fig dir etale}.

\begin{construction}[Lifting embeddings]\label{constr embedding directed}
Suppose $G$ is an undirected graph and $H$ is a directed graph, and temporarily let $U(H)$ be the undirected graph associated to $H$.
If $f\colon G \rat U(H)$ is an embedding, then there is a directed graph $G'$ and an embedding $f' \colon G' \rat H$, so that $G \cong U(G') \rat U(H)$ is the map $f$.
Moreover, the graph $G'$ and embedding $f'$ are essentially unique with respect to this property.
We leave most details to the reader.
Our starting data is the map $f$, as below.
\[ \begin{tikzcd}[sep=small]
A_G \dar & A_G\dar  \lar["\dagger"'] & \lar[hook'] D_G\dar \ar[dr, phantom, "\lrcorner" very near start]  \rar & V_G\dar  
\\ 
E_H \amalg E_H & E_H \amalg E_H \lar["\text{swap}"'] & \lar[hook'] I_H\amalg O_H \rar & V_H
\end{tikzcd} \]
We set $E_{G'} \coloneqq E_G$ and $V_{G'} \coloneqq V_G$, and define $I_{G'}$ and $O_{G'}$ as subsets of $D_G$ fitting into the following pullbacks.
\[ \begin{tikzcd}[sep=small]
I_{G'} \ar[dr, phantom, "\lrcorner" very near start] \dar \rar[hook] & D_G\dar \ar[dr, phantom, "\lrcorner" very near start]  \rar & V_G\dar 
& D_G \lar \dar \ar[dl, phantom, "\llcorner" very near start] & O_{G'} \dar \lar[hook'] \ar[dl, phantom, "\llcorner" very near start]
\\ 
I_H \rar[hook] & I_H\amalg O_H \rar & V_H & \lar I_H\amalg O_H & \lar[hook'] O_H
\end{tikzcd} \]
Using that $f$ is an embedding, one can show that $G'$ is a directed graph (that is, $I_{G'} \to E_{G'}$ and $O_{G'} \to E_{G'}$ are injective).
Further, $f' \colon G' \rat H$ is an embedding. 
\end{construction}

We now introduce a set associated to graphs which will be used to describe all manner of graphical maps below.

\begin{definition}
If $G$ is an directed or undirected connected graph, we define $\emb(G)$ to be the set of embeddings with codomain $G$, modulo isomorphisms in the domain.
\end{definition}
There is a partial order on $\emb(G)$ determined by factorization of embeddings. 
As a partially-ordered set, $\emb(G)$ has a unique maximal element given by the identity of $G$, and each edge determines a minimal element (if $G$ has at least one edge, that is, if it is not $\medstar_0$, then the set of minimal elements is precisely $E_G$).

In light of \cref{constr embedding directed}, if $G$ is a directed graph and $U(G)$ is its associated undirected graph, then $\emb(G) \to \emb(U(G))$ is an isomorphism.
For this reason we do not distinguish notationally between the two: $\emb(G)$ is fundamentally a set associated to an undirected graph.

Every connected subgraph (in the sense of \cref{def subgraph}) of $G$ determines an element of $\emb(G)$, and if $G$ is a tree, then $\emb(G)$ coincides with the set of connected subgraphs.
Thus if $T$ is a subtree of a tree $G$, we will usually write $T\in \emb(G)$ for the class of the associated inclusion $[T \rat G]$.

For an arbitrary graph $G$, a vertex $v$ determines a subgraph $\medstar_v$ just when there are no loops at the vertex $v$, but a vertex always determines an embedding $\iota_v \colon \medstar_v \rat G$ picking out the vertex ($\medstar_v$ is a star with $D_{\medstar_v} = \nbhd(v) \subseteq D_G$).
There are canonical inclusions
\begin{align*}
E_G &\hookrightarrow \emb(G) &
V_G &\hookrightarrow \emb(G)
\end{align*}
of edges and vertices into the embedding set, which we regard as subset inclusions.

\begin{definition}\label{def a union and vertex disjoint}
Suppose that $h\colon H \rat G$ and $k\colon K \rat G$ are two embeddings.
An embedding $\ell \colon L \rat G$ is called \mydef{a union of $h$ and $k$} if 
\begin{enumerate}
\item $[\ell]$ is an upper bound for both $[h]$ and $[k]$ in the poset $\emb(G)$ (that is, there is a factorization $H \rat L \rat G$ of $h$ and likewise for $k$), and \label{def a union ub}
\item $\ell (V_L) = h(V_H) \cup k(V_K)$. \label{def a union cup}
\end{enumerate}
The two embeddings are \mydef{vertex disjoint} if 
\begin{enumerate}[resume*]
\item $h(V_H) \cap k(V_K) = \varnothing$.
\end{enumerate}
\end{definition}

\begin{figure}
\includegraphics[scale=0.3]{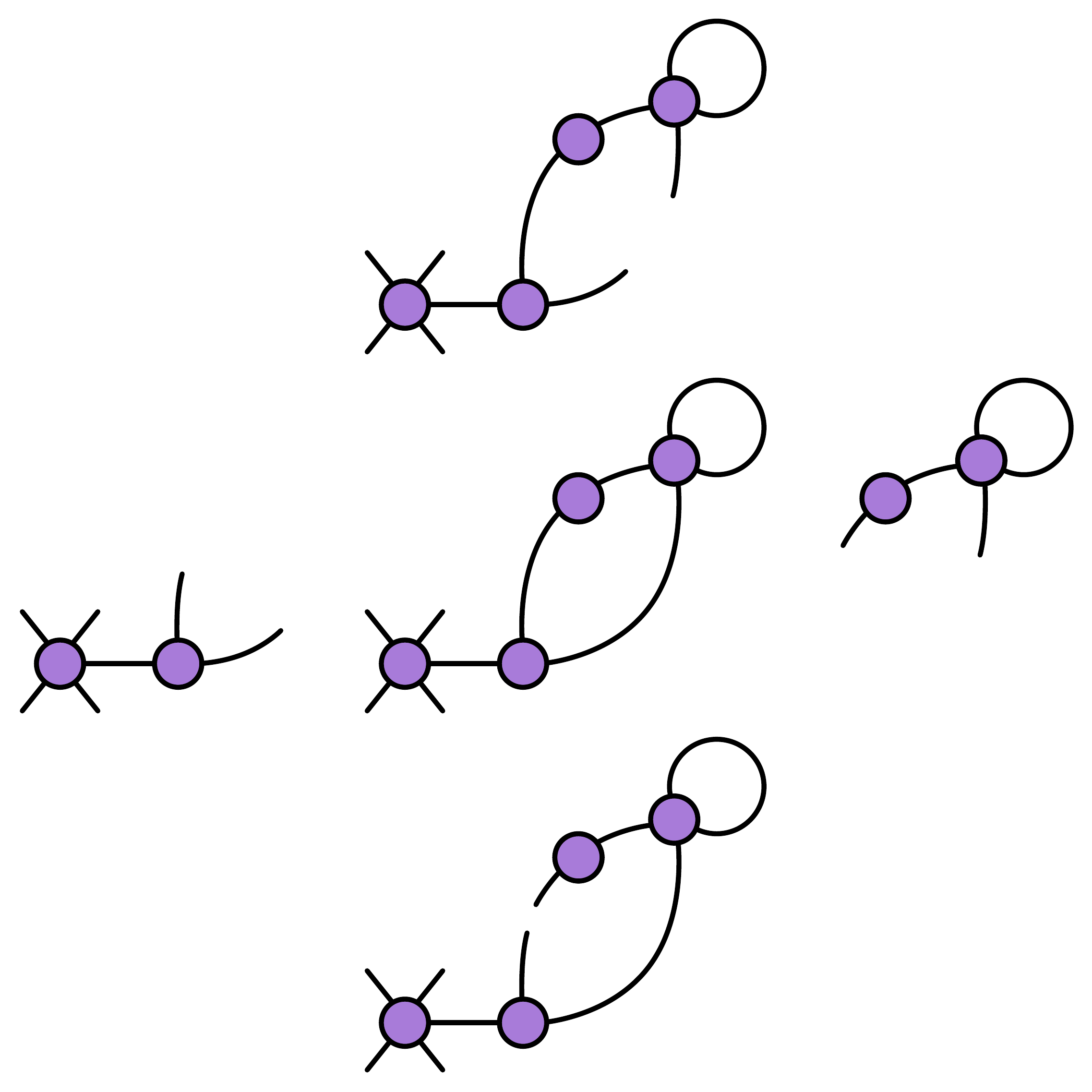}
\caption{Example of non-uniqueness of unions}\label{fig union}
\end{figure}

Unions need not be unique.
In \cref{fig union}, the two subgraphs to the left and right of the graph $G$ in the center have three possible unions in $\emb(G)$.
These are pictured in the center column, and include the maximum $[\id_G] \in \emb(G)$.
Note that these subgraphs do not have a \emph{least} upper bound in $\emb(G)$; also note that least upper bounds that do exist may fail to be unions.
As another example, the embedding pictured in \cref{fig self-union} has \emph{four} possible self-unions, determined by which of the two snipped edges one decides to glue together.

\begin{figure}
\includegraphics[scale=0.3]{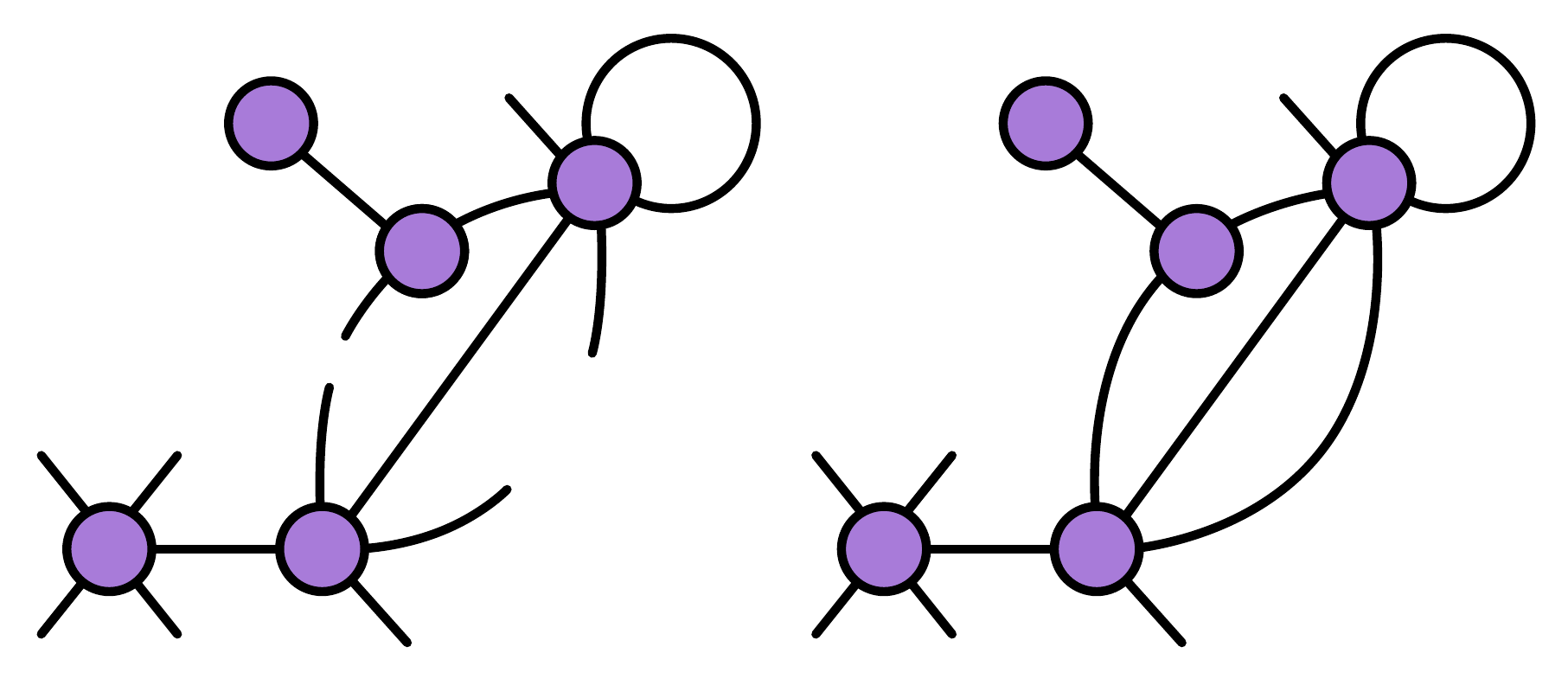}
\caption{Even self-unions are not unique}\label{fig self-union}
\end{figure}

If $H$ and $K$ are subgraphs of a graph $G$ in the sense of \cref{def subgraph}, then there is a natural notion of union subgraph obtained by setting $E_{H\cup K} = E_H \cup E_K \subseteq E_G$ and $V_{H\cup K} = V_H \cup V_K \subseteq V_G$.
If $H$ and $K$ are both connected subgraphs of a connected graph $G$, then $H \cup K$ will be a union in the above sense just when $H$ and $K$ \emph{overlap}, that is, just when at least one of $E_H \cap E_K$ or $V_H \cap V_K$ is inhabited.
In fact, $H\cup K$ is the only union which is also a subgraph, by essentially the same proof as \cite[Lemma B.7]{Hackney:CGOS}. 
In particular, if $G$ is a tree then we know all embeddings come from subtrees, hence any pair $S, T \in \emb(G)$ has at most one union.

\subsection{Boundary-compatibility}
We end this section with one further construction that we will use in our definition of graph maps, that of \emph{boundary-compatibility} in \cref{def boundary compatible}.
If $f \colon H \rat G$ is an embedding between undirected graphs, then the composite
\[
	\eth(H) \hookrightarrow A_H \to A_G 
\]
is a monomorphism (see Lemma 1.20 of \cite{HRY-mod1}), and we write $\eth(f) \subseteq A_G$ for its image, which we call the \mydef{boundary of $f$}.
For example, if $G$ is the graph from \cref{example undirected graph} (see \cref{fig graph example}) and $f$ is the star-embedding picking out the vertex $w$, then $\eth(f) = \{4, 6^\dagger, 5, 5^\dagger\}$.
Likewise, if $f\colon H \rat G$ is an embedding between directed graphs, then $\inp(H) \to E_G$ and $\out(H) \to E_G$ are also monomorphisms,
and we write $\inp(f), \out(f) \subseteq E_G$ for the images of these sets.

The assignment $\eth$ on embedddings descends to a function $\emb(G) \to \wp(A_G)$ landing in the power set of $A_G$.
It is more convenient to think about a multi-set structure, so we instead consider the function
\[
	\eth \colon \emb(G) \to \wp(A_G) \to \mathbb{N}A_G
\]
landing in the free commutative monoid on $A_G$ (whose elements are finite unordered lists of elements of $A_G$).
Likewise, we have functions
\[
	\inp, \out \colon \emb(G) \to \wp(E_G) \to \mathbb{N}E_G
\]
when $G$ is directed.

\begin{definition}\label{def boundary compatible}
Suppose $G$ and $G'$ are two undirected connected graphs, $\varphi_0 \colon A_G \to A_G'$ is an involutive function, and $\hphi \colon \emb(G) \nrightarrow \emb(G')$ is a partially-defined function.
We say that the pair $\varphi \coloneqq (\varphi_0, \hphi)$ is \mydef{boundary-compatible} if the diagram
\[ \begin{tikzcd}
\emb(G) \rar{\eth} \dar[negated]{\hphi}& \mathbb{N}A_G \dar{\mathbb{N} \varphi_0 }\\
\emb(G') \rar{\eth} & \mathbb{N}A_{G'}
\end{tikzcd} \]
commutes on the domain of definition of $\hphi$.
Likewise, if $G$ and $G'$ are directed connected graphs, $\varphi_0 \colon E_G \to E_G'$ is a function, and $\hphi \colon \emb(G) \nrightarrow \emb(G')$ is a partially-defined function, then we say that the pair $\varphi \coloneqq (\varphi_0, \hphi)$ is \mydef{boundary-compatible} if the diagram
\[ \begin{tikzcd}
\mathbb{N}E_G  \dar{\mathbb{N} \varphi_0 } & \emb(G) \rar{\out}\lar[swap]{\inp} \dar[negated]{\hphi}& \mathbb{N}E_G \dar{\mathbb{N} \varphi_0 }\\
 \mathbb{N}E_{G'} & \emb(G') \rar{\out}\lar[swap]{\inp} & \mathbb{N}E_{G'}
\end{tikzcd} \]
commutes on the domain of definition of $\hphi$.
\end{definition}

We are writing the diagrams above using partial maps for conciseness, since we will often want to discuss functions between subsets of $\emb(G)$ and $\emb(G')$ (\cref{def tree map}, \cref{def properadic gcat}, \cref{prop tree maps equiv}), and work with the restrictions of $\eth$ (or $\inp$ and $\out$) to those subsets.

The standard example of a boundary compatible pair is one associated to an embedding $f \colon G\rat G'$.
The function $\varphi_0$ is defined to be $A_G \to A_{G'}$ (or $E_G \to E_{G'}$ if $G$, $G'$ are directed), and the (total) function $\hphi \colon \emb(G) \to \emb(G')$ is defined by post-composition with $f$.

\section{Categories of simply-connected graphs}\label{section trees}

\begin{definition}\label{def tree map}
Suppose $G$ and $G'$ are undirected trees.
\begin{enumerate}
\item A \mydef{tree map} is a boundary-compatible pair $(\varphi_0, \varphi_1)$ consisting of an involutive function $\varphi_0 \colon A_G \to A_{G'}$ and a function $\varphi_1 \colon V_G \to \emb(G')$.
\item Suppose $\varphi = (\varphi_0,\hphi)$ is a pair consisting of an involutive function $\varphi_0 \colon A_G \to A_{G'}$ and a function $\hphi \colon \emb(G) \to \emb(G')$.
We say that $\varphi$ is a \mydef{full tree map} if is boundary-compatible, and, if two subtrees $S$ and $T$ of $G$ overlap, then so do $\hphi(S)$ and $\hphi(T)$ and we have $\hphi(S\cup T) = \hphi(S) \cup \hphi(T)$.
\end{enumerate}
The full tree maps form a category via composition of pairs, which we denote by $\gcatnought$.
Trees with inhabited boundary span the full subcategory $\gcatcyc \subset \gcatnought$.
\end{definition}

Every full tree map determines a tree map by restriction of $\hphi$ along $V_G \hookrightarrow \emb(G)$, and we show in \cref{prop tree maps equiv} of \cref{sec trees vs full trees} that this gives a bijection between tree maps and full tree maps. 
In light of this, we will use the shorter term `tree map' also for full tree maps.
Since subtrees are uniquely determined by their boundaries, boundary-compatibility implies that tree maps send edges to edges.

It turns out that tree maps also preserve \emph{intersections} between overlapping subtrees.
This follows from Theorem A.7 and Proposition B.21 of \cite{Hackney:CGOS}, but we provide an elementary, direct proof in \cref{subsec full tree intersections}. 

We now give an alternative description of $\gcatcyc$ as a full subcategory of the category of \emph{cyclic operads}, $\cyc$, and likewise of $\gcatnought$ as a full subcategory of \emph{augmented cyclic operads}, $\augcyc$.
The distinction between the two is that augmented cyclic operads $P$ are allowed to have a meaningful set $P(\,\,)$ of operations, which cannot compose with any other operations.
The term `augmented cyclic operad' follows \cite{HinichVaintrob:COACD}, and would be called `entries-only $\ast$-polycategories' in \cite{Shulman:2Chu} or simply `cyclic operads' in \cite[Definition 2.3]{DrummondColeHackney:DKHTCO}; see \cite[Example 7.7]{Shulman:2Chu} for some discussion on terminology. 
If $\mathfrak{C}$ is an involutive set of colors, we write $\cyc_{\mathfrak{C}} \subset \augcyc_{\mathfrak{C}}$ for categories of $\mathfrak{C}$-colored objects and identity-on-colors morphisms.

Each tree $G$ determines a free object $\freecyc(G)$ in $\augcyc_{A_G}$.
The generating set of $\freecyc(G)$ is precisely the set $V_G$, where $v\in V_G$ is considered as an element of $\freecyc(G)(\eth(\medstar_v)) = \freecyc(G)(\nbhd(v)^\dagger)$.
A variation of \cite[\S2.2]{HRY-mod2} allows one to show that this becomes a functor $\freecyc \colon \gcatnought \to \augcyc$.

\begin{proposition}\label{prop gcatnought ff}
The category $\gcatnought$ is equivalent to the full subcategory of augmented cyclic operads $\augcyc$ on the objects $\freecyc(G)$ (where $G$ ranges over all trees).
This equivalence restricts to an equivalence between $\gcatcyc$ and the full subcategory of cyclic operads $\cyc \subset \augcyc$ on the objects $\freecyc(G)$, where $G$ ranges over all trees with inhabited boundary.
\end{proposition}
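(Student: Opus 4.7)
The plan is to split the proposition into essential surjectivity, full faithfulness of $\freecyc$, and the refinement for the cyclic case. Essential surjectivity onto the specified subcategory is automatic from how that subcategory is defined. The content lies in full faithfulness, which I would extract from the universal property of $\freecyc(G)$ as a free augmented cyclic operad once its operations have been described concretely.

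First I would establish the following concrete picture of $\freecyc(G)$. For a tree $G$, the operations of $\freecyc(G)$ are in natural bijection with $\emb(G)$: an embedding $[T\rat G]$ contributes an operation of profile $\eth(T)\subseteq A_G$, the generator $v\in V_G$ corresponds to $\iota_v\in\emb(G)$, the cyclic $\mathbb{Z}/(n+1)$-action acts by reordering the boundary, and the operadic composition $\vphantom{\circ}_a\!\circ_{a^\dagger}$ along matching arcs corresponds to the union of two vertex-disjoint subtrees that overlap along the edge $[a,a^\dagger]$. For trees, any two overlapping subtrees have a unique union, so this is well-defined, and the description can be proved by induction on $|V_G|$ or extracted from the construction of $\freecyc$ via the variation of \cite{HRY-mod2}*{\S 2.2} referenced in the text. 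In particular, an operation of $\freecyc(G)$ has empty profile iff the corresponding subtree has empty boundary, which in a tree forces it to be $\medstar_0$.

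Given this, a morphism $\Phi\colon \freecyc(G)\to\freecyc(G')$ in $\augcyc$ is, by freeness, determined by an involutive function $\varphi_0\colon A_G\to A_{G'}$ together with an assignment $\varphi_1\colon V_G\to\freecyc(G')$ such that the profile of $\varphi_1(v)$ is $\mathbb{N}\varphi_0(\eth(\iota_v))$. Under the bijection above, $\varphi_1$ lands in $\emb(G')$ and the profile condition is precisely boundary-compatibility on vertices, so $(\varphi_0,\varphi_1)$ is exactly a tree map in the sense of \cref{def tree map}(1). These correspond to full tree maps via \cref{prop tree maps equiv}, giving fullness and faithfulness on hom-sets. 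Compatibility with composition is where the union-preservation clause pays off: composing two operad morphisms requires expressing $\varphi_1(v)$ as an iterated union of vertex stars in $G'$ and applying the next operad morphism vertex by vertex, which is exactly what the condition $\hphi(S\cup T)=\hphi(S)\cup\hphi(T)$ encodes.

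Finally, for the cyclic restriction: $\freecyc(G)$ lies in $\cyc$ iff $\freecyc(G)(\,\,)=\varnothing$ iff $G$ has no $\medstar_0$-subtree. Since $\medstar_0$ is the only tree with empty boundary, and every vertex of a connected tree with inhabited boundary has inhabited neighborhood, this condition holds precisely when $\eth(G)\neq\varnothing$. The main obstacle is the first step: the explicit $\emb(G)$-indexed description of $\freecyc(G)$ requires carefully tracking how iterated compositions of vertices along matching arcs of a tree assemble into subtrees, and how the cyclic action interacts with both; everything else in the argument is a formal consequence of freeness and the already-proven equivalence between tree maps and full tree maps.
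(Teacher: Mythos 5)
Your overall route is the same as the paper's: first describe the operations of $\freecyc(G)$ concretely as subtrees (the paper's version of your ``first step'' is a lemma stating that operations are in bijection with subtrees equipped with a total ordering of their boundary), then use freeness to identify an operad morphism $\freecyc(H)\to\freecyc(G)$ with a pair $(\varphi_0,\varphi_1)$, observe that color-compatibility is exactly boundary-compatibility so that this is precisely a tree map, and finish with \cref{prop tree maps equiv}. Two small points of imprecision in that main line: the bijection is with subtrees \emph{together with an ordering of the boundary} rather than with $\emb(G)$ on the nose (the ordering is what matches the profile of an operation, though as the paper notes it is not genuinely extra data of a morphism), and faithfulness quietly requires that distinct subtrees give distinct operations of the free object, which is part of what your deferred ``careful tracking'' must deliver.

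There is, however, a genuine error in your treatment of the cyclic case. The claim ``$\medstar_0$ is the only tree with empty boundary'' is false, and so is the equivalence ``$\freecyc(G)(\,\,)=\varnothing$ iff $G$ has no $\medstar_0$-subtree.'' Glue two copies of $\medstar_1$ along their unique boundary arcs: the result is a connected tree with two vertices, one internal edge, and $\eth(G)=\varnothing$, containing no $\medstar_0$. This is exactly the kind of object the augmented theory is designed to see --- it is the composite of two ``output-only'' operations landing in $P(\,\,)$ --- and it lies in $\gcatnought\setminus\gcatcyc$; if $\medstar_0$ were the only boundaryless tree, $\gcatcyc$ and $\gcatnought$ would differ by a single object and the whole distinction would be nearly vacuous. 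The correct argument is: $\freecyc(G)(\,\,)$ is inhabited iff some subtree $T\subseteq G$ has $\eth(T)=\varnothing$; since $G$ is connected, every \emph{proper} subtree has inhabited boundary (it must meet the rest of $G$ along some boundary arc), so the only candidate is $T=G$ itself (the case $T=\medstar_0$ forces $G=\medstar_0$ by connectedness). Hence $\freecyc(G)$ is a cyclic operad iff $\eth(G)\neq\varnothing$, which is the conclusion you wanted, but reached by a different and valid route.
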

The second part of this is simply the observation that $\freecyc(G)$ is a cyclic operad if and only if $G$ has inhabited boundary.
The first part of the statement will be proved in \cref{subsec tree maps and cyclic maps}.

Turning now to directed trees, we introduce the `dioperadic graphical category.'

\begin{definition}\label{def dir tree map}
Suppose $G$ and $G'$ are directed trees.
A \mydef{tree map} from $G$ to $G'$ is a boundary-compatible pair 
$\varphi = (\varphi_0,\hphi)$ consisting of a function $\varphi_0 \colon E_G \to E_{G'}$ and a function $\hphi \colon \emb(G) \to \emb(G')$ which preserves unions: if $S$ and $T$ overlap, then $\hphi(S\cup T) = \hphi(S) \cup \hphi(T)$.
These form a category of all directed trees, which we denote by $\pgcatsc$.\footnote{
This was called $\pgcat_{\mathrm{sc}}$ in \cite{ChuHackney}, and is equivalent to the category $\mathbf{\Theta}$ in \cite[\S6.3.5]{HRYbook}.}
\end{definition}

Of course one can equivalently define tree maps by replacing $\hphi$ with a function $\varphi_1 \colon V_G \to \emb(G')$, as in \cref{def tree map}.
A version of \cref{prop gcatnought ff} then holds in this setting, with essentially the same proof: the category $\pgcatsc$ is equivalent to a full subcategory (spanned by the directed trees) on the category of dioperads.

By restricting the objects further, we conclude that the dendroidal category from \cite{MoerdijkWeiss:DS}, originally defined as a category of the operads associated to rooted trees, is equivalent to a full subcategory of $\pgcatsc$.

\begin{definition}\label{def dendroidal category}
The Moerdijk--Weiss \mydef{dendroidal category}, denoted by $\dendcat$, is the full subcategory of $\pgcatsc$ consisting of those trees $G$ so that $\out(v)$ is a singleton set for every $v\in V_G$.
The \mydef{simplicial category} $\simpcat$ can be identified with the full subcategory of $\pgcatsc$ on the linear graphs from \cref{ex linear graphs}.
\end{definition}

Of course if one is only interested in rooted trees, simpler descriptions are possible; see, for example, \cite{Kock:PFT}.

\section{On several categories of graphs}

We now extend \cref{def tree map} and \cref{def dir tree map} to maps between graphs that are not simply-connected.
This formulation first appeared in \cite{Hackney:CGOS}.

\begin{definition}\label{def graph map}
Let $G$ and $G'$ be connected undirected (resp.\ directed) graphs.
A \mydef{graph map} $\varphi \colon G \to G'$ is a pair $(\varphi_0, \hphi)$ consisting of a function $\hphi \colon \emb(G) \to \emb(G')$ as well as 
\begin{enumerate}
\item an involutive function $\varphi_0 \colon A_G \to A_{G'}$ if $G$ and $G'$ are undirected, or
\item a function $\varphi_0 \colon E_G \to E_{G'}$ if $G$ and $G'$ are directed.
\end{enumerate}
The pair $(\varphi, \hphi)$ must satisfy the following four conditions:
\begin{enumerate}[label=(\roman*),ref=\roman*]
\item The function $\hphi$ sends edges to edges. \label{graph def edges}
\item The function $\hphi$ preserves unions. \label{graph def union}
\item The function $\hphi$ takes vertex disjoint pairs to vertex disjoint pairs. \label{graph def intersect}
\item The pair $(\varphi, \hphi)$ is boundary-compatible (in the sense of \cref{def boundary compatible}). \label{graph def boundary}
\end{enumerate}
Composition of graph maps is given by composition of the two constituent functions.
This yields a category $\gcat$ of undirected connected graphs and their graph maps, as well as a category $\ocat$ of directed connected graphs and their graph maps.
\end{definition}

\begin{proposition}\label{prop equiv to other graph cats}
The category $\gcat$ of undirected connected graphs and their graph maps is equivalent to the graphical category from \cite[Definition 1.31]{HRY-mod1}.
The category $\ocat$ of directed connected graphs and their graph maps is equivalent to the wheeled properadic graphical category from \cite[\S2]{HRYfactorizations}.
\end{proposition}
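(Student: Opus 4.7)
The plan is to show that both $\gcat$ and the HRY-mod1 graphical category are naturally equivalent to the full subcategory of modular operads $\modular$ spanned by the free modular operads $\freemod(G)$ on connected undirected graphs $G$; the directed statement proceeds identically with wheeled properads in place of modular operads. The reason to pass through $\modular$ (resp.\ $\wproperad$) is that the HRY-mod1 graphical category is defined to be precisely this full subcategory, so the content of the proposition is to show that morphisms in $\gcat$ of \cref{def graph map} agree with morphisms between the corresponding free modular operads.

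First, I would construct a functor $\freemod \colon \gcat \to \modular$. On objects, $\freemod(G)$ is the free modular operad on the involutive color set $A_G$ with one generator for each $v \in V_G$, where $v$ has boundary profile $\nbhd(v)^\dagger \subseteq A_G$. Its operations are presented by connected subgraphs (equivalently, by elements of $\emb(G)$) modulo the modular-operad relations, which exactly mirror the ways of building a connected subgraph by iterated stars-composition and self-contraction; this presentation is a variation of \cite[\S2.2]{HRY-mod2} (used just above \cref{prop gcatnought ff}). On morphisms, a graph map $(\varphi_0, \hphi)$ is sent to the modular-operad map whose color function is the involutive map $\varphi_0$ and which sends the generator $v$ to the operation of $\freemod(G')$ determined by the embedding $\hphi(\iota_v) \colon \hphi(\medstar_v) \rat G'$. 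Axiom \eqref{graph def boundary} (boundary-compatibility) guarantees this has the right profile, \eqref{graph def edges} guarantees identities go to identities, and \eqref{graph def union}--\eqref{graph def intersect} guarantee that the whole function $\hphi$ on $\emb(G)$ (not just on vertices) is determined by, and compatible with, the modular-operad structure built from its values on stars.

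Second, I would show that $\freemod$ is fully faithful onto the full subcategory $\modular^{\mathrm{free}} \subset \modular$ of free modular operads on connected graphs. Faithfulness is immediate because $\varphi_0$ is the underlying color map and, by the tree-style analysis of \cref{sec trees vs full trees} extended to all connected graphs (see in particular \cref{subsec full tree intersections}), the function $\hphi$ is determined by its restriction to $V_G$, which is the generator-by-generator action. For fullness, given a modular-operad morphism $\freemod(G) \to \freemod(G')$ with underlying color map $\psi_0$, each generator $v$ is sent to some operation in $\freemod(G')$, which corresponds to a connected subgraph $H_v \rat G'$; one then defines $\hphi$ on general embeddings by using axioms \eqref{graph def union} and \eqref{graph def intersect} to glue the $H_v$ along the appropriate overlap data in $G$. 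Checking this produces a graph map (i.e.\ that the four axioms of \cref{def graph map} are verified) is a matter of translating the modular-operad axioms satisfied by the original morphism.

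Finally, by comparison with the definitions in \cite{HRY-mod1,HRYfactorizations} (and ultimately with Hackney's presentation in \cite{Hackney:CGOS}, of which this paper is described as a distillation), the HRY-mod1 graphical category is equivalent to $\modular^{\mathrm{free}}$, and the wheeled properadic graphical category of HRYfactorizations is equivalent to the analogous full subcategory $\wproperad^{\mathrm{free}}$; composing gives the equivalences claimed. The main obstacle is verifying fullness in the directed case: embeddings can exhibit clutching behavior (\cref{fig embedding}) so that $\hphi$ need not be injective even when the underlying color map $\varphi_0$ is, and one must check that the vertex-disjointness axiom \eqref{graph def intersect} suffices to recover the correct contraction data in the wheeled properad from a given wheeled-properad morphism. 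This is essentially the content of the matching between embeddings and ``wheeled'' composition/contraction sequences worked out in \cite{Hackney:CGOS}, which I would cite rather than repeat.
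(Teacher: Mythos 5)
The paper's own proof is a citation: the first statement is Theorem 4.15 of \cite{Hackney:CGOS}, and the second combines Proposition 5.10 and Theorem 5.13 there. Your proposal instead tries to reprove the result by identifying $\gcat$ with the full subcategory of $\modular$ spanned by the free modular operads $\freemod(G)$, in analogy with \cref{prop gcatnought ff}. This approach contains a genuine gap, and in fact its central premise is false. The graphical category of \cite[Definition 1.31]{HRY-mod1} is \emph{not} defined as the full subcategory of $\modular$ on the objects $\freemod(G)$; its morphisms are maps of (free) modular operads subject to \emph{additional} conditions. Correspondingly, the functor $\freemod \colon \gcat \to \modular$ is not fully faithful --- this is stated explicitly in \cref{rmk Raynor Kock categories}, where the full image is identified with Raynor's strictly larger category $\mathbf{R}$ (her $\Xi$), which carries only a weak factorization system and for which vertex-disjointness fails. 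The same applies verbatim to $\ocat \to \wproperad$ in the directed case. So your ``fullness'' step cannot succeed: since $\freemod(G)$ is free, a morphism $\freemod(G) \to \freemod(G')$ amounts to an arbitrary color-compatible assignment of the generators $v \in V_G$ to operations of $\freemod(G')$, and nothing forces such an assignment to satisfy axioms \eqref{graph def union} and \eqref{graph def intersect} of \cref{def graph map}. Two distinct vertices may be sent to overlapping subgraphs, and a single vertex may be sent to an operation whose self-gluings are ambiguous (cf.\ \cref{fig self-union}), so there is no canonical way to ``glue the $H_v$'' into a well-defined, union-preserving $\hphi$ on all of $\emb(G)$.

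It is worth seeing why the analogous argument \emph{does} work for trees (\cref{prop gcatnought ff}) but breaks here: the proof of \cref{lem no overlap} derives a contradiction by producing a cycle in the codomain, which is impossible only because the codomain is a tree. Once $G'$ may contain cycles, that rigidity disappears, and the Hom-sets in the full subcategory of $\modular$ genuinely exceed those of $\gcat$. A correct proof along structural lines must compare the \emph{conditions} imposed on morphisms in \cref{def graph map} with the extra conditions in the HRY definitions directly (which is what \cite{Hackney:CGOS} does), rather than routing through a full subcategory of $\modular$ or $\wproperad$.
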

\begin{proof}
This result appears in \cite{Hackney:CGOS}: the first part is Theorem 4.15, and the second part combines Proposition 5.10 and Theorem 5.13.
`Wheeled properadic graphical category' refers to the category $\mathcal{B}$ from \cite{HRYfactorizations}.
\end{proof}

The next proposition uses that tree maps send edges to edges and preserve vertex-disjoint pairs (\cref{intersection lemma}).

\begin{proposition}\label{prop tree full subcats}
The category $\gcatnought$ is the full subcategory of $\gcat$ on the undirected trees.
The category $\pgcatsc$ is the full subcategory of $\ocat$ on the directed trees. \qed
\end{proposition}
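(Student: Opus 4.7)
The plan is to show, for trees $G$ and $G'$ (undirected in the first case, directed in the second), that the set of full tree maps $G \to G'$ coincides with the set of graph maps $G \to G'$. Since composition in both $\gcatnought$ and $\gcat$ (and likewise $\pgcatsc$ and $\ocat$) is defined as componentwise composition of the pair $(\varphi_0, \hphi)$, a bijection on Hom-sets for each pair of tree objects yields the claimed full-subcategory identifications. The objects already match by definition.

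The easier direction is that a graph map $(\varphi_0, \hphi)$ between trees is a full tree map. Condition (iv) of \cref{def graph map} is exactly boundary-compatibility. For the union condition of \cref{def tree map}, suppose subtrees $S, T \in \emb(G)$ overlap; then the subgraph union $S \cup T$ is a union in the sense of \cref{def a union and vertex disjoint}, so by (ii) the image $\hphi(S \cup T)$ is a union of $\hphi(S)$ and $\hphi(T)$ in $\emb(G')$. Because $G'$ is a tree, any union of two elements of $\emb(G')$ must coincide with the subgraph union (and hence force overlap), whence $\hphi(S \cup T) = \hphi(S) \cup \hphi(T)$.

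For the converse, I would verify each of (i)--(iv) for a full tree map $(\varphi_0, \hphi)$. Axiom (iv) is boundary-compatibility by definition. For (ii), I note that in the tree $G$ any union of two embeddings $H, K \in \emb(G)$ exists only when $H, K$ overlap and is then the unique subgraph union $H \cup K$; the full tree map hypothesis supplies $\hphi(H \cup K) = \hphi(H) \cup \hphi(K)$ as subgraph unions in $G'$, which is in particular a union in the sense of \cref{def a union and vertex disjoint}. Axiom (i) follows from boundary-compatibility: for an edge $e \in \emb(G)$ with arcs $\{a, a^\dagger\}$ (or directed edge $e$ with $\inp(e) = \out(e) = \{e\}$), boundary-compatibility forces $\hphi(e)$ to be a connected subtree of $G'$ with boundary $\{\varphi_0(a), \varphi_0(a)^\dagger\}$ (resp.\ with both $\inp$ and $\out$ equal to $\{\varphi_0(e)\}$). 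In each case the indicated edge of $G'$ lies in the subtree but is incident to no vertex of it, so connectedness forces the subtree to be the edge itself. Axiom (iii) is the intersection lemma cited in the paragraph preceding the proposition.

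The main obstacle is axiom (iii), preservation of vertex-disjoint pairs, which is a substantive combinatorial statement about tree maps; once it is invoked, the remainder of the proof is a routine unwinding of definitions, using only that in a tree any union in $\emb$ is forced to be the (unique) subgraph union and exists only for overlapping subtrees.
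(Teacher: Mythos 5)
Your proposal is correct and follows exactly the route the paper intends: the paper leaves this proposition as an immediate consequence (\qed) of the two facts flagged in the preceding sentence --- that boundary-compatibility forces edges to go to edges, and that \cref{intersection lemma} gives preservation of vertex-disjoint pairs --- together with the earlier observation that in a tree every union in $\emb$ exists only for overlapping subtrees and is then the unique subgraph union. Your write-up simply makes explicit the definition-unwinding that the paper suppresses.
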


\begin{remark}\label{rmk directed graphs as slice}
If $\CC$ is a category, let $\widehat{\CC} \coloneqq \fun(\CC^\oprm, \set)$ be the category of set-valued presheaves.
If $X \in \widehat{\CC}$ is a presheaf and $G \in \CC$ is an object, we will write $X_G$ for the value of $X$ at $G$.
The Yoneda embedding $\CC \to \widehat{\CC}$ takes an object $G$ to the representable presheaf $\CC(-,G)$.
An important example, when $\CC = \gcat$ or some full subcategory, is the \mydef{orientation presheaf} $\opshf$.
This is defined by 
\[
	\opshf_G = \{ \text{involutive functions } A_G \to  \{+1, -1\} \}
\]
where the set $\{+1,-1\}$ is equipped with the free involution.
This additional data determines a directed structure on $G$, see \cite[Construction 5.4]{Hackney:CGOS}.
Meanwhile, every directed graph determines a canonical element of the orientation presheaf.
In fact, we have equivalences of categories 
\[ \begin{tikzcd}[row sep=0]
\ocat \rar{\simeq} & \gcat \times_{\widehat{\gcat}} \widehat{\gcat}_{/\opshf} \\
\pgcatsc \rar{\simeq} & \gcatnought \times_{\widehat{\gcatnought}} \widehat{\gcatnought}_{/\opshf}
\end{tikzcd} \]
whose first projections forget the directed structure.
In other words, the category of elements of $\opshf \in \widehat{\gcat}$ (resp.\ $\opshf \in \widehat{\gcatnought}$) is $\ocat$ (resp.\ $\pgcatsc$).
See \cite[Proposition 5.10]{Hackney:CGOS}.
\end{remark}
A similar construction appears in the context of Feynman categories and Borisov--Manin graph morphisms in \cite[6.4.1]{KaufmannLucas:DFC}, and for \'etale maps in \cite[\S4.5]{Raynor:DLCSM}.

\begin{definition}[Active and inert maps]\label{def active inert maps}
A graph map $\varphi = (\varphi_0, \hphi) \colon G \to G'$ is an \mydef{active map} if $\hphi \colon \emb(G) \to \emb(G')$ preserves the maximum ($\hphi([\id_G]) = [\id_{G'}]$).
We write $\gcatact \subset \gcat$ and $\ocat_{\actrm} \subset \ocat$ for the wide subcategories of active maps, and decorate the arrows as $G \ract G'$.
A graph map $\varphi$ is \mydef{inert} if it comes from an embedding; in other words, if the the dashed map in the diagram
\[ \begin{tikzcd}
V_G \dar[hook] \rar[dashed] & V_{G'} \dar[hook] \\
\emb(G) \rar{\hphi} & \emb(G')
\end{tikzcd} \]
exists. 
We write $\gcatemb \subset \gcat$ and $\ocat_\intrm \subset \ocat$ for the subcategories of inert maps, and decorate the arrows by $G \rat G'$.
\end{definition}

Every graph map $\varphi \colon G \to G'$ factors as an active map followed by an inert map.
Indeed, if $\hphi([\id_G]) = [H \rat G']$, then the factorization takes on the form $G \ract H \rat G'$.
Constructing the active map $G\ract H$ is delicate in general since $A_H \to A_{G'}$ (resp.\ $E_H \to E_{G'}$) need not be a monomorphism.
Nevertheless, we have the following (see \cite[Theorem 2.15]{HRY-mod1}).
\begin{theorem}\label{thm fact system}
The pair $(\gcatact, \gcatemb)$ is an orthogonal factorization system\footnote{An \mydef{orthogonal factorization system} on a category $\CC$ is a pair $(L, R)$ of classes of maps, each closed under composition and containing all of the isomorphisms. These have the property that any morphism of $\CC$ factors as $r \circ \ell$, with $r\in R$ and $\ell\in L$, and this factorization is unique, up to unique isomorphism.} on the category $\gcat$, as is $(\ocat_{\actrm}, \ocat_\intrm)$ on $\ocat$.
These factorization systems restrict to factorization systems on the subcategories $\gcatcyc \subset \gcatnought \subset \gcat$ and $\simpcat \subset \dendcat \subset \pgcatsc \subset \ocat$. \qed
\end{theorem}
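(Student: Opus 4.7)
The plan is to verify the four axioms of an orthogonal factorization system: both $\gcatact$ and $\gcatemb$ are closed under composition, both contain all isomorphisms, every morphism admits an active-then-inert factorization, and this factorization is unique up to unique isomorphism. The restriction to the listed subcategories then amounts to showing that the intermediate object $H$ inherits the relevant defining property.

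First I would dispense with the easy parts. Identities are both active and inert, inert maps compose since embeddings do, and active maps compose since preservation of the top element $[\id_G] \in \emb(G)$ is stable under composition of pointed functions. The shape of the factorization is already indicated in the paragraph preceding the theorem: given $\varphi = (\varphi_0, \hat\varphi) \colon G \to G'$ with $\hat\varphi([\id_G]) = [h \colon H \rat G']$, the inert piece is the graph map induced by $h$, and I must construct an active piece $\psi = (\psi_0, \hat\psi) \colon G \ract H$ with $h \circ \psi = \varphi$.

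For $\hat\psi$, every $f \in \emb(G)$ satisfies $\hat\varphi(f) \leq [h]$ in the poset $\emb(G')$, so $\hat\varphi(f)$ lifts canonically to a class in $\emb(H)$, which I take as $\hat\psi(f)$. The main obstacle, flagged in the text, is constructing $\psi_0$: since $A_H \to A_{G'}$ (respectively $E_H \to E_{G'}$) need not be injective, one cannot simply lift $\varphi_0$ through it. My approach is to pin down $\psi_0$ locally using boundary-compatibility. Every arc $a \in A_G$ lies in $\nbhd(v)$ for some vertex $v$, or in $\eth(G)$; in the first case $\psi_0(a)$ is forced into $\eth(\hat\psi(\iota_v)) \hookrightarrow A_H$, and in the second case into $\eth(H) \hookrightarrow A_H$. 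Both target sets inject into $A_H$, so the value is uniquely determined by the requirement $h \circ \psi_0 = \varphi_0$. Consistency when $a$ lies in several neighborhoods is controlled by the union-preservation condition \ref{graph def union}. One then verifies the four conditions of \cref{def graph map} for $\psi$; edges-to-edges and preservation of vertex-disjoint pairs follow directly from the corresponding properties of $\hat\varphi$.

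Uniqueness is routine once existence is in hand: if $G \ract H_i \rat G'$ ($i = 1, 2$) are two factorizations, then both $H_i$ represent $\hat\varphi([\id_G])$ in $\emb(G')$, yielding a unique isomorphism $H_1 \cong H_2$ over $G'$ with respect to which the active components agree. For the restrictions, since each inclusion is full by \cref{prop tree full subcats}, I need only check that $H$ lies in the appropriate subcategory when $G$ and $G'$ do. When $G'$ is a tree, every embedding into $G'$ comes from a connected subgraph, so $H$ is a subtree; this handles $\gcatnought$ and $\pgcatsc$. Inhabited boundary transfers from $G$ to $H$ via boundary-compatibility of $\psi$, handling $\gcatcyc$. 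For $\dendcat$ and $\simpcat$, the pullback condition in the definition of \'etale map forces each vertex of $H$ to have the same in- and out-degree as its image in $G'$, so the single-output and linear properties descend to $H$.
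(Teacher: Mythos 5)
The paper does not actually prove this theorem: the statement carries a \qed{} and is deferred to \cite[Theorem 2.15]{HRY-mod1}. Your outline follows the same overall strategy as that reference --- factor through a graph $H$ representing $\hphi([\id_G])$, take the inert part to be induced by the representing embedding $h\colon H \rat G'$, and force the active part from compatibility. The closure properties, the uniqueness argument, and the treatment of the restricted subcategories are all essentially correct (for $\dendcat$ and $\simpcat$ the pullback clause in \cref{def etale} does transfer in/out-degrees along $H \rat G'$, and inhabitedness of $\eth(H)$ follows from boundary-compatibility of the active part as you say).

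The concrete gap is the sentence ``every $f \in \emb(G)$ satisfies $\hphi(f) \leq [h]$ \dots, so $\hphi(f)$ lifts canonically to a class in $\emb(H)$.'' A lift exists, but canonicity fails: postcomposition with $h$ gives a map $\emb(H) \to \emb(G')$ that need not be injective, precisely because $A_H \to A_{G'}$ need not be a monomorphism (the clutching behavior of \cref{fig embedding}). If $h$ glues two boundary arcs of $H$ into a single internal edge $e$ of $G'$, the two corresponding edges of $H$ are distinct elements of $\emb(H)$ with the same image $[e]$, so any $f$ with $\hphi(f) = [e]$ has two lifts. (When $\hphi(f)$ contains a vertex the lift \emph{is} unique, since $V_H \to V_{G'}$ and $D_H \to D_{G'}$ are injective and the remaining boundary arcs are then forced by involutivity; the problem is exactly the edge case.) Consequently $\hat\psi$ cannot be defined on edges before $\psi_0$ is: the correct order is to build $\psi_0$ first --- which is where you rightly locate the delicacy --- and then define $\hat\psi$ on $E_G$ via $\psi_0$ and on elements with vertices via the vertex data. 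Relatedly, the consistency of $\psi_0$ across an internal edge (the value forced at $\nbhd(v) \ni a$ versus at $\nbhd(u) \ni a^\dagger$ must be $\dagger$-related) is the real content of the construction; ``controlled by the union-preservation condition'' names the right tool but is not yet an argument. Both points are repairable, but as written the proposal asserts as canonical exactly the step the paper flags as delicate.
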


\subsection{Properadic graphical category}\label{subsec properadic graph cat}
We briefly describe the properadic graphical category $\pgcat$, whose objects are connected \emph{acyclic} directed graphs, in the sense of \cref{def directed paths acyclic}.
It requires a somewhat subtle notion of \emph{structured subgraph}, which we give in a reformulation first appearing as \cite[1.6.5]{Kock:GHP}; see also \cite[Remark 2.2.4]{ChuHackney}.

\begin{definition}[Structured subgraph]\label{def ssub}
A \mydef{na\"ive morphism of directed graphs} has the same data as an \'etale map \cref{def etale}, but we do not require the middle squares to be pullbacks.
Suppose $G$ is an acyclic directed graph.
A connected subgraph $H\subseteq G$ is a \mydef{structured subgraph} if the inclusion $H\rat G$ is right orthogonal in the category of na\"ive morphisms of directed graphs to the maps ${\downarrow} \amalg {\downarrow} \to L_n$ (for all $n\geq 0$) picking out the input and output edge in the linear graph $L_n$.
In other words, given a commutative square as below (whose horizontal arrows may just be na\"ive morphisms), there is a unique dashed lift.
\[ \begin{tikzcd}
{\downarrow} \amalg {\downarrow} \rar \dar[swap]{0,n}  & H \dar[tail] \\
L_n \rar \ar[ur,dashed,"\exists !"] & G
\end{tikzcd} \]
We write $\ssub(G) \subseteq \emb(G)$ for the set of structured subgraphs of $G$.
\end{definition}

This can alternately be phrased in terms of `graph substitution' (see \cite[Remark 2.2.4]{ChuHackney}).
Every edge and star subgraph is a structured subgraph, so we have $E_G, V_G \subseteq \ssub(G)$.
Note that if $T$ is a \emph{simply-connected} directed graph (i.e., an object in $\pgcatsc$), then every connected subgraph is a structured subgraph, so we may identify $\ssub(T)$ with $\emb(T)$.

The following formulation is due to Hongyi Chu and the author \cite[Definition 2.2.11]{ChuHackney}, who also showed it is equivalent to the graphical category from \cite[Chapter 6]{HRYbook}.
This description is akin to the simpler \cref{def dir tree map}, but predates it.

\begin{definition}\label{def properadic gcat}
The \mydef{properadic graphical category}, denoted $\pgcat$, has objects the acyclic directed graphs.
A morphism $\varphi \colon G \to G'$ consists of a pair of functions $\varphi_0 \colon E_G \to E_{G'}$ and $\hphi \colon \ssub(G) \to \ssub(G')$ so that $(\varphi_0, \hphi)$ is boundary-compatible and $\hphi$ preserves unions.
By `preserving unions,' we mean that if $H_1, H_2 \in \ssub(G)$ are such that $H_1 \cup H_2$ is a structured subgraph, then $\hphi(H_1 \cup H_2) = \hphi(H_1) \cup \hphi(H_2)$.
Composition in $\pgcat$ is given by composition of pairs.
\end{definition}

It turns out that given a map in $\pgcat$, one can extend $\hphi \colon \ssub(G) \to \ssub(G')$ to a function $\emb(G) \to \emb(G')$ in a canonical way.
This is one ingredient in the following (see \cite{Hackney:CGOS} for a proof).

\begin{theorem}\label{thm properadic gcat}
The properadic graphical category $\pgcat$ may be identified with the subcategory of $\ocat$ with
\begin{enumerate}
\item objects the acyclic directed graphs, and \label{properadic gcat objs}
\item morphisms those $\varphi \colon G \to G'$  so that $\hphi (G) \in \ssub(G')$. \label{properadic gcat mors}
\end{enumerate}
\end{theorem}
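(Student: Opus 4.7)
\medskip

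The plan is to construct mutually inverse passages between $\pgcat$ and the subcategory $\pgcat' \subseteq \ocat$ described in \ref{properadic gcat objs}--\ref{properadic gcat mors}, both of which have acyclic directed graphs as objects and which act as the identity on objects. The bulk of the work is to show that morphisms match up; functoriality then follows because the composition law on both sides is componentwise composition of $\varphi_0$ and of the $\hphi$ component (on $\ssub$ or $\emb$ respectively), and the passages respect these.

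For the passage $\pgcat \to \pgcat'$, given a morphism $(\varphi_0, \hphi_{\ssub}) \colon G \to G'$ in $\pgcat$, I must extend $\hphi_{\ssub} \colon \ssub(G) \to \ssub(G')$ to a function $\tilde\hphi \colon \emb(G) \to \emb(G')$ satisfying the four axioms of \cref{def graph map}. The key observation is that every $H \in \emb(G)$ is a union in the poset $\emb(G)$ of the structured subgraphs $\{\medstar_v \mid v \in V_H\} \cup \{e \mid e \in E_H\}$, so I propose to set $\tilde\hphi(H)$ to be a union in $\emb(G')$ of $\{\hphi_{\ssub}(\medstar_v)\} \cup \{\hphi_{\ssub}(e)\}$. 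The existence and uniqueness of this union must be checked using boundary-compatibility together with acyclicity of $G'$ (recall that unions in $\emb(G')$ need not be unique in general, see \cref{fig self-union}). Once $\tilde\hphi$ is defined, axiom \ref{graph def edges} holds by construction, \ref{graph def boundary} is inherited from $\hphi_{\ssub}$, and \ref{graph def union} and \ref{graph def intersect} are established by decomposing an arbitrary pair of embeddings into their edge/star pieces and applying the corresponding properties of $\hphi_{\ssub}$. Because $G \in \ssub(G)$ is the maximum, $\tilde\hphi(G) = \hphi_{\ssub}(G) \in \ssub(G')$, so the result lies in $\pgcat'$.

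For the reverse passage $\pgcat' \to \pgcat$, given $\varphi \colon G \to G'$ in $\pgcat'$ I restrict $\hphi$ along the inclusion $\ssub(G) \hookrightarrow \emb(G)$ and check that this restriction lands in $\ssub(G')$ and preserves those unions that remain structured. To show $\hphi(H) \in \ssub(G')$ for $H \in \ssub(G)$, I transfer a lifting problem against $\downarrow\amalg \downarrow \to L_n$ in $G'$ back to such a lifting problem in $G$: a na\"ive morphism $L_n \to G'$ landing on a given input/output pair of $\hphi(H)$ lifts through $\hphi(G) \in \ssub(G')$ by hypothesis, and acyclicity plus boundary-compatibility let me identify the resulting line with the image of a directed path in $G$ passing through $H$, whose structuredness then produces the required lift. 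Preservation of structured unions is inherited from axiom \ref{graph def union}.

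Finally I verify that the two passages are mutually inverse: restricting the extension back to $\ssub$ manifestly recovers $\hphi_{\ssub}$, and conversely, any graph map is determined by its values on stars and edges via axioms \ref{graph def edges} and \ref{graph def union}, so extending the restriction of $\hphi$ recovers $\hphi$ on all of $\emb(G)$. The principal obstacle is the construction and well-definedness of the canonical extension in the second paragraph: unions in $\emb(G')$ can genuinely fail to be unique, and I expect the argument that the specific union $\bigcup \hphi_{\ssub}(\medstar_v) \cup \bigcup \hphi_{\ssub}(e)$ is pinned down by the boundary data and the acyclicity of $G'$ to be the most delicate combinatorial step, where the reference \cite{Hackney:CGOS} does the heavy lifting.
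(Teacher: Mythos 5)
Your overall architecture --- identity-on-objects assignments in both directions, extending $\hphi$ from $\ssub(G)$ to $\emb(G)$ one way and restricting the other, with the difficulty concentrated in the extension --- is the right one, and indeed the paper itself only gestures at this and defers the details to \cite{Hackney:CGOS}. But the specific recipe you give for the extension has a genuine defect: an element $H \in \emb(G)$ is \emph{not} determined by the set $\{\medstar_v \mid v \in V_H\} \cup \{e \mid e\in E_H\} \subseteq \emb(G)$, so ``the unique union of the images of these pieces'' cannot be the right formula, and the uniqueness you hope to extract from acyclicity of $G'$ is false. Concretely, let $G = G'$ be the acyclic graph with two vertices $u,v$ and two parallel internal edges $e_1,e_2$ from $u$ to $v$, and let $\varphi = \id_G$. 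The embedding $H_1 \rat G$ that glues only $e_1$ (so that two distinct edges of $H_1$, a dangling output of $u$ and a dangling input of $v$, both lie over $e_2$), the analogous $H_2$ gluing only $e_2$, and $\id_G$ are three distinct elements of $\emb(G)$, and all three are unions of the same set $\{\medstar_u,\medstar_v,e_1,e_2\}$ in the sense of \cref{def a union and vertex disjoint}. Any correct extension must send $H_1\mapsto H_1$, $H_2\mapsto H_2$, $\id_G\mapsto\id_G$, so it must remember the gluing pattern of the internal edges of $H$ (which outputs of which stars are identified with which inputs of which stars), not merely the underlying set of stars and edges. The actual construction glues the $\hphi(\medstar_v)$ along the $\varphi_0$-images of the internal edges of $H$, and the delicate point --- which is the real content of the theorem --- is showing that this prescribed gluing is realized by an embedding into $G'$ at all; this is where the hypothesis $\hphi(G)\in\ssub(G')$ and boundary-compatibility must enter, and it is the step you would need to supply rather than defer.

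The reverse passage is also thinner than it appears. For $H\in\ssub(G)$ you must show that $\hphi(H)$, a priori only an element of $\emb(G')$, is isomorphic to a subgraph inclusion \emph{and} is right orthogonal to the maps ${\downarrow}\amalg{\downarrow}\to L_n$; your transport of the lifting problem along ``the image of a directed path in $G$'' presupposes a compatibility of $\hphi$ with directed paths that is not among the axioms of \cref{def graph map} and itself requires proof. Neither point is a fatal obstruction --- both are established in \cite{Hackney:CGOS} --- but as written your argument replaces the two genuinely hard steps with assertions, one of which (uniqueness of the union) is false as stated.
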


Since $\ssub(G)$ and $\emb(G)$ coincide when $G$ is simply-connected, \cref{prop tree full subcats} implies the following (or directly compare \cref{def dir tree map} and \cref{def properadic gcat}).

\begin{corollary}
The full subcategory of $\pgcat$ on the directed trees is $\pgcatsc$.  \qed
\end{corollary}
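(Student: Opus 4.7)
The plan is to follow the second strategy indicated in the hint, leveraging \cref{thm properadic gcat} and \cref{prop tree full subcats} so that we never have to reprove things about unions, edge-preservation, or boundary-compatibility from scratch.

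First I would note that directed trees are in particular connected acyclic directed graphs, so they are objects both of $\pgcatsc$ (by definition) and of $\pgcat$ (by \cref{thm properadic gcat}\eqref{properadic gcat objs}). Hence the two categories have the same objects once we restrict $\pgcat$ to directed trees, and the question reduces to showing that the morphism sets agree. By \cref{prop tree full subcats}, for directed trees $T, T'$ the hom-set $\pgcatsc(T,T')$ is exactly the set of all $\ocat$-morphisms $T\to T'$. On the other hand, by \cref{thm properadic gcat}\eqref{properadic gcat mors} the hom-set $\pgcat(T,T')$ is the subset of $\ocat(T,T')$ consisting of those $\varphi = (\varphi_0,\hphi)$ with $\hphi(T) \in \ssub(T')$.

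So the only thing to verify is that the condition $\hphi([\id_T]) \in \ssub(T')$ is automatic whenever $T'$ is a directed tree. This is immediate from the remark following \cref{def ssub}: when $T'$ is simply-connected every connected subgraph is a structured subgraph, so $\ssub(T') = \emb(T')$ inside $\emb(T')$. Hence $\hphi([\id_T]) \in \emb(T') = \ssub(T')$ holds trivially, and the two hom-sets coincide.

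There is essentially no obstacle here; the one small thing to double-check is that composition agrees on the two sides, but since both $\pgcat$ and $\pgcatsc$ are realized as (wide, respectively full) subcategories of $\ocat$ with composition inherited from $\ocat$ (\cref{thm properadic gcat} and \cref{prop tree full subcats}), this is automatic. The alternative, if one prefers, is a direct definitional comparison: \cref{def dir tree map} and \cref{def properadic gcat} literally coincide once $\ssub$ is replaced by $\emb$ and once one observes that for trees the union of two overlapping subtrees is again a subtree, so the conditional union-preservation in \cref{def properadic gcat} becomes the unconditional version of \cref{def dir tree map}.
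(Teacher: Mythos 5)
Your proof is correct and follows essentially the same route as the paper, which deduces the corollary from \cref{prop tree full subcats} together with the observation that $\ssub(G') = \emb(G')$ for simply-connected $G'$ (so the extra condition in \cref{thm properadic gcat} is vacuous), and likewise mentions the direct comparison of \cref{def dir tree map} with \cref{def properadic gcat} as an alternative.
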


The notions of active and inert maps from \cref{def active inert maps} can be modified in this context, where now maps in $\pgcat_{\intrm}$ are those maps which are isomorphic to a structured subgraph inclusion.
This again is a factorization system $(\pgcat_\actrm, \pgcat_\intrm)$, restricted from the one on $\ocat$ from \cref{thm fact system} (see \cite[2.4.14]{Kock:GHP}).

\section{The Segal condition and generalized operads}
\label{sec segal}

In this section, we turn to the main topic of the paper, which is the interpretation of generalized operads as Segal presheaves for appropriate graph categories.

First, let us give an idea about how to understand certain elements in a presheaf. 
Let $X \in \widehat{\gcat}$ be an $\gcat$-presheaf.
We interpret the set $X_{\updownarrow}$ as a set of colors.
Since $\updownarrow$ has a non-trivial automorphism in $\gcat$, the set $X_{\updownarrow}$ attains a potentially non-trivial involution. (If instead $X$ was an $\ocat$-presheaf, then we would not expect to have an involution on $X_{\downarrow}$, as ${\downarrow}\in \ocat$ only possesses the identity automorphism.)
We interpret $X_{\medstar_n}$ as operations with arity $n$.
Using the maps ${\updownarrow} \to \medstar_n$ classifying each boundary arc in the $n$-element set $\eth(\medstar_n) = \mathbf{n}$, we obtain a function
\[
	X_{\medstar_n} \to \prod_{\mathbf{n}} X_{\updownarrow}
\]
giving the boundary profile of an operation.
(Likewise, if $X\in \widehat{\ocat}$ then we have functions $X_{\medstar_{n,m}} \to \prod_{\mathbf{n}} X_{\downarrow}$ and $X_{\medstar_{n,m}} \to \prod_{\mathbf{m}} X_{\downarrow}$ giving the inputs and outputs.)

\begin{figure}
\labellist
\small\hair 2pt
\pinlabel $\medstar_u$ [l] at 410 230
\pinlabel $G$ [l] at 240 26
\pinlabel $\medstar_G$ [l] at 60 26
\pinlabel $\medstar_v$ [l] at 500 26
\endlabellist
\centering
\includegraphics[scale=0.3]{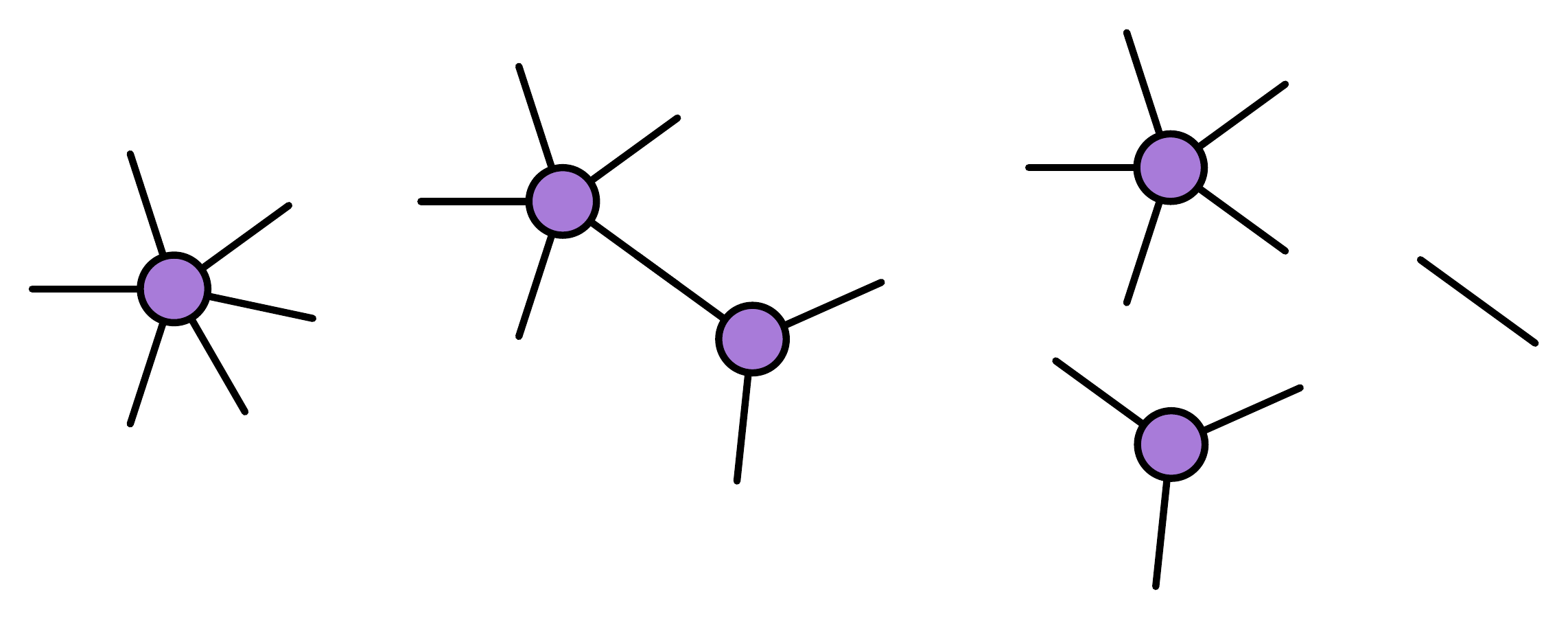}
\caption{Graph $G$ with two vertices and one internal edge}
\label{fig two vertex simple}
\end{figure}

Suppose now that $G \in \gcat$ has two vertices $u,v$ and exactly one internal edge, as in \cref{fig two vertex simple}.
We then have a span of sets as follows:
\begin{equation}\label{eq seg map two vertex simple} \begin{tikzcd}[sep=small]
& X_G \ar[dl, end anchor=north east] \ar[dr, end anchor=north west]  \\
X_{\medstar_G} & & X_{\medstar_u} \underset{X_{\updownarrow}}\times X_{\medstar_v}
\end{tikzcd} \end{equation}
The leftward leg is associated to an active map $\medstar_G \ract G$ from a star graph (we may take $\medstar_G$ to be a star with $\eth(\medstar_G) = \eth(G)$), and the rightward leg is associated to the two inert maps $\iota_u \colon \medstar_u \rat G$ and $\iota_v \colon \medstar_v \rat G$.
This span represents a kind of generalized way of `composing' operations $X$.
If the leg on the right happens to be a bijection, then this is a legitimate function
\[
	X_{\medstar_G} \leftarrow X_{\medstar_u} \times_{X_{\updownarrow}} X_{\medstar_v} 
\]
giving \emph{composition} of two operations in $X$.
This is the kind of fundamental operation one has in a(n augmented) cyclic operad.
In the directed setting, the evident variation gives a dioperadic composition.\footnote{When considering a $\pgcat$-presheaf $X$, it is more appropriate to work with acyclic graphs $G$ having two vertices, but an arbitrary number of internal edges, resulting in a span of the form $X_{\medstar_G} \leftarrow X_G \rightarrow X_{\medstar_u} \times_{\prod X_{\downarrow}} X_{\medstar_v}$, where $\prod X_{\downarrow}$ is the product indexed by the inner edges of $G$.}

Instead, suppose that $G\in \gcat$ has one vertex $v$ and exactly one internal edge, which is necessarily a loop at $v$, as in \cref{fig one vertex simple}.
We can again form a span 
\begin{equation}\label{eq seg map one vertex simple} \begin{tikzcd}[sep=small]
& X_G \ar[dl, end anchor=north east] \ar[dr, end anchor=north west]  \\
X_{\medstar_G} & & \widetilde{X_{\medstar_v}} \rar{\subset} & X_{\medstar_v}
\end{tikzcd} \end{equation}
whose right leg lands in the subset $\widetilde{X_{\medstar_v}}$ of $X_{\medstar_v}$ consisting of those elements which agree along the two embeddings $j_{a}, j_{a'} \colon {\updownarrow} \rat \medstar_v$ with $\iota_v j_a = \iota_v j_{a'} \colon {\updownarrow} \rat \medstar_v \rat G$.
If this rightward map is a bijection, this gives a \emph{contraction} of suitable operations
\[
	X_{\medstar_G} \leftarrow \widetilde{X_{\medstar_v}}.
\]
These are precisely the sort of contractions that one would expect to see in a modular operad.
In the directed case, the loop must go from an output of the vertex to an input, and we arrive at the contractions for a wheeled properad.

\begin{figure}
\labellist
\small\hair 2pt
\pinlabel $G$ [l] at 220 0
\pinlabel $\medstar_G$ [l] at 70 0
\pinlabel $\medstar_v$ [l] at 400 0
\endlabellist
\centering
\includegraphics[scale=0.3]{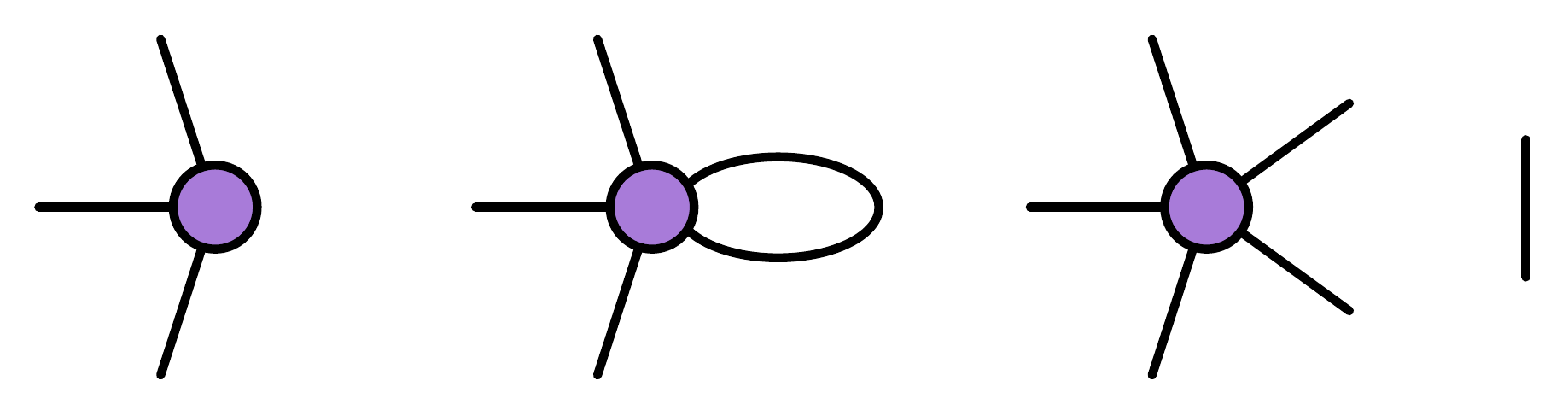}
\caption{Graph with one vertex and one internal edge}
\label{fig one vertex simple}
\end{figure}

There are of course axioms that these compositions and contractions should satisfy (unitality, associativity, etc.), which can likewise be governed by graphs.
Let us give the full abstract definition.

\begin{definition}[Segal presheaves]\label{def segal condition}
Suppose $\CC$ is one of the graph categories we have discussed above ($\gcat, \gcatnought, \gcatcyc, \ocat, \pgcat, \pgcatsc, \dendcat, \simpcat$), and let $\CC_\elrm \subseteq \CC_\intrm$ denote the category whose objects are \emph{elementary graphs}, that is, edges and stars, and whose morphisms are inert maps. 
\begin{itemize}
\item If $G\in \CC$ is a graph, write $\CC_{/G}^\elrm \coloneqq \CC_\elrm \times_{\CC} \CC^\intrm_{/G}$ for the category whose objects are inert maps $K \rat G$ from an elementary object $K$ to $G$, and whose morphisms are inert maps.
\item If $X\in \widehat{\CC}$ is a presheaf, then the \mydef{Segal map} at $G$ is the function
\[
	X_G \to \lim_{\substack{K \rat G  \\ \in(\CC_{/G}^\elrm)^\oprm}} X_K. 
\]
\item A presheaf $X\in \widehat{\CC}$ is said to satisfy the \mydef{Segal condition} if the Segal map is a bijection for all $G\in \CC$.
\end{itemize}
We also say that $X$ is a \mydef{Segal presheaf} in this case.
We write $\seg(\CC) \subset \widehat{\CC}$ for the full subcategory on the Segal presheaves.
\end{definition}

One can unravel that if $\CC = \gcat$, then the rightward maps in the spans \eqref{eq seg map two vertex simple} and \eqref{eq seg map one vertex simple} are equivalent to the Segal maps for these graphs.
This uses a final subcategory argument (we can omit the objects of $\CC_{/G}^\elrm$ corresponding to boundary edges without changing the limit).
Likewise, if $\CC = \simpcat$, the Segal condition reduces to the one at the beginning of the introduction (the Segal maps on $X_1$ and $X_0$ are always identities).
As an exercise, the reader should determine for which graph categories $\CC$ all of the representable presheaves $\CC(-,G)$ are Segal.

\subsection{The general situation} There are now abstract settings for Segal conditions, which cover many situations of interest, including all of those above.
Chu and Haugseng gave an extensive study of the abstract Segal condition for $\infty$-categories in \cite{ChuHaugseng:HCASC}, and the following appears as Definition 2.1 therein.
\begin{definition}\label{def alg pattern}
An \mydef{algebraic pattern} is an $\infty$-category $\PP$ equipped with an inert-active factorization system 
$(\PP_\intrm, \PP_\actrm)$, along with some specified full subcategory $\PP_\elrm \subseteq \PP_\intrm$ whose objects are called \mydef{elementary}.
\end{definition}
Given a functor $X \colon \PP \to \mathcal{S}$, one says that $X$ is Segal if $X(p) \to \lim_{e\in (\PP_\elrm)_{/p}} X(e)$ (where here the limit is in the $\infty$-categorical sense) is an equivalence for all $p\in \PP$.
If $\CC$ is any one of the graph categories under discussion, then $\PP = \CC^\oprm$ is an algebraic pattern in this sense, and this condition agrees with that from \cref{def segal condition}.

Less general are the hypermoment categories of Berger \cite{Berger:MCO}, a framework that still includes the graph categories.
To state the definition, recall that the skeletal category of finite pointed sets $\mathbf{F}_\ast$ has an inert-active factorization system \cite[Remark 2.1.2.2]{Lurie:HA}. 
A map $\{ \ast, 1, 2, \dots, n\} = \lr{n} \to \lr{m}$ is \emph{active} when only the basepoint maps to the basepoint, and \emph{inert} if exactly one element of $\lr{n}$ maps to each non-basepoint element of $\lr{m}$.
We are interested in the corresponding active-inert factorization system on $\fstarop$. 
\begin{definition}[Definition 3.1 of \cite{Berger:MCO}]\label{def hypermoment}
A (unital) \mydef{hypermoment category} consists of a category $\CC$, an active-inert factorization system on $\CC$, and a functor $\gamma\colon \CC \to \fstarop$ respecting the factorization systems.
These data must satisfy the following:
\begin{enumerate}
\item For each $c \in \CC$ and each inert map $\lr{1} \rat \gamma(c)$ in $\fstarop$, there is an essentially unique inert lift $u \rat c$ where $u$ is a unit. Here, \emph{unit} means that $\gamma(u) = \lr{1}$ and any active map with codomain $u$ has precisely one inert section. \label{def hm inert lifts} 
\item For each $c\in \CC$, there is an essentially unique active map $u \ract c$ whose domain is a unit.\label{def hm active}
\end{enumerate}
\end{definition}
Given a hypermoment category $\CC$, one can define $\CC_\elrm \subset \CC_\intrm$ to be the full subcategory spanned by the units and the \emph{nilobjects}, those objects living over $\lr{0} \in \fstarop$.
Imitating \cref{def segal condition} gives a notion of Segal presheaf on $\CC$. 

The graph categories we have discussed so far become hypermoment categories by means of the functor $\vertfunc \colon \CC \to \finsetstarop$ that sends a graph $G$ to $(V_G)_+$.
If $\varphi \colon G \to G'$ is a morphism and $w\in V_{G'}$, then $\vertfunc(\varphi)(w) = v$ just when $w$ is a vertex in $\hphi(\medstar_v)$.
Otherwise $w$ is sent to the basepoint of $(V_G)_+$.
By vertex-disjointness, this is a well-defined function $(V_{G'})_+ \to (V_G)_+$.
The units are precisely the stars, and the maps $\medstar_G \ract G$ are those from \eqref{def hm active}.
See \cite{Berger:MCO,Hackney:CGOS} for details.

\subsection{Nerves}
In \cref{section trees}, we saw a construction taking a tree $G$ to the (augmented) cyclic operad $\freecyc(G)$ which is freely generated by the vertices and arcs of $G$.
This induced a (fully faithful) functor $\gcatnought \to \augcyc$. 
This construction be imitated for each of the graph categories, yielding the following collection of functors into categories of generalized operads.

\begin{center}
\begin{tabular}{@{}llll@{}} \toprule
\multicolumn{2}{c}{Directed} & \multicolumn{2}{c}{Undirected} \\
\cmidrule(r){1-2} \cmidrule(r){3-4}
Functor & Structure & Functor & Structure \\
\midrule
$\ocat \to \wproperad$ & Wheeled properad & $\gcat \to \modular$ & Modular operad 
\\
$\pgcat \to \properad$ & Properad &  &  
\\
$\pgcatsc \hookrightarrow \diop$ & Dioperad & $\gcatnought \hookrightarrow \augcyc$ & Aug.\ cyclic operad
\\
 &  & $\gcatcyc \hookrightarrow \cyc$ & Cyclic operad
\\
$\dendcat \hookrightarrow \operad$ & Operad &  &  
\\
$\simpcat \hookrightarrow \cat$ & Category & & 
\\ \bottomrule
\end{tabular}
\end{center}

\begin{remark}\label{rmk Raynor Kock categories}
Most of the functors in the preceding table are fully faithful, with the exceptions of $\ocat \to \wproperad$, $\gcat \to \modular$, and $\pgcat \to \properad$.
We can uniquely factor each of them into an identity-on-objects functor, followed by a fully faithful functor.
Two of these,
\[ \begin{tikzcd}[row sep=0]
\gcat \rar{\text{iden-on-obj}} &[+0.75cm] \mathbf{R} \rar[hook, "\text{f.f.}"] & \modular \\
\pgcat \rar{\text{iden-on-obj}} &[+0.75cm] \mathbf{K} \rar[hook, "\text{f.f.}"] & \properad
\end{tikzcd} \]
have appeared in the work of Raynor (see \cite[\S8.4]{Raynor:DLCSM}, where $\mathbf{R}$ is called $\Xi$) and Kock (see \cite[2.4.14]{Kock:GHP}, where $\mathbf{K}$ is called $\widetilde{\text{\textbf{\textsl{Gr}}}}$).
These sources include detailed descriptions of working in these categories, which are somewhat more complicated in structure than those they extend.
For example, these categories possess \emph{weak} factorization systems, in contrast with the orthogonal factorization systems present on $\gcat$ and $\pgcat$.
Further, there are not any clear functors into $\fstarop$, since vertex-disjointness fails (see \cite[Remark 7.1.10]{ChuHackney}).
\end{remark}

Each of the functors in the table induces a \mydef{nerve functor}.
For example, suppose $\freemod \colon \gcat \to \modular$ is the functor that takes an undirected graph $G$ to the $A_G$-modular operad $\freemod(G)$ freely generated by its vertices.
Then we can define a functor 
\[ 
N \colon \modular \to \widehat{\gcat} \]
by $NP = \modular(\freemod(-),P) \colon \gcat^\oprm \to \set$.
Practically by definition, we have that
\[
	\colim_{\substack{K \rat G  \\ \in \gcat_{/G}^\elrm}}  \freemod(K) \to \freemod(G)	
\]
is an isomorphism in $\modular$, which formally implies that $NP$ is a Segal presheaf.
This construction and argument applies to all of the graph categories and functors in the table above, giving nerve functors as follows.
\begin{align*}
\modular &\to  \widehat{\gcat} &
    \wproperad &\to \widehat{\ocat} &
    \operad &\to \widehat{\dendcat} \\
\augcyc &\to \widehat{\gcatnought} &
    \properad &\to \widehat{\pgcat}  &
    \cat &\to \widehat{\simpcat} \\
\cyc &\to \widehat{\gcatcyc} &
    \diop &\to \widehat{\pgcatsc} 
\end{align*}
Each of these factors through a full subcategory $\seg(\CC) \hookrightarrow \widehat{\CC}$ of Segal presheaves.
Generalizing the classical equivalence $\cat \simeq \seg(\simpcat)$ from the beginning of the introduction, we have the following.
\begin{theorem}[Nerve theorem]\label{thm nerve}
The nerve functors 
\[
\wproperad \to \seg(\ocat), \properad \to \seg(\pgcat), \operad \to \seg(\dendcat), \text{and }\modular \to \seg(\gcat)
\]
are equivalences of categories.
\end{theorem}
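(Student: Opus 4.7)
The plan is to construct an inverse $L \colon \seg(\CC) \to \mathrm{GenOp}$ to each nerve $N$ (where $\mathrm{GenOp}$ and $\CC$ range uniformly over the four pairs in the statement) and verify that the unit and counit of the resulting adjunction restrict to isomorphisms on Segal presheaves. I would define $L$ as the left Kan extension of $\freemod$ along the Yoneda embedding $\CC \hookrightarrow \widehat{\CC}$, so that $L$ is tautologically left adjoint to $N$: we have $\mathrm{GenOp}(LX,P) \cong \widehat{\CC}(X, \mathrm{GenOp}(\freemod(-),P)) = \widehat{\CC}(X, NP)$.

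For full faithfulness of $N$ (equivalently, the counit $LNP \to P$ being an isomorphism), the key input is that $\freemod\colon \CC \to \mathrm{GenOp}$ already satisfies the elementary colimit identity
\[
\colim_{K \rat G \,\in\, \CC_{/G}^\elrm} \freemod(K) \xrightarrow{\cong} \freemod(G),
\]
which is exactly the identity used in the text to show that $NP$ is a Segal presheaf. Combined with the tautological colimit presentation $P \cong \colim_{\freemod(G) \to P} \freemod(G)$, this expresses $LNP \to P$ as a bijection on each set of operations.

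To characterize the essential image, I would show that for Segal $X$ the unit $\eta_X \colon X \to NLX$ is an isomorphism. The construction of $LX$ is driven by the elementary data encoded by $X$: the color set is $X_{\updownarrow}$, equipped with the involution coming from the nontrivial automorphism of ${\updownarrow} \in \CC$; operations with a prescribed boundary profile form fibers of $X_{\medstar_n} \to \prod X_{\updownarrow}$; compositions come from inverting the Segal maps $X_G \xrightarrow{\sim} X_{\medstar_u} \times_{X_{\updownarrow}} X_{\medstar_v}$ at two-vertex one-edge graphs and then applying $X$ to the active collapse $G \ract \medstar_G$; and, for modular operads and wheeled properads, contractions come from the Segal maps on one-vertex loop graphs as in~\eqref{eq seg map one vertex simple}. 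The generalized-operad axioms (unitality, equivariance, associativity, naturality of contractions) each correspond to equality of two active contractions out of an auxiliary graph with three or four vertices, which holds by uniqueness of the elementary colimit decomposition.

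The main obstacle is the explicit bookkeeping of these axioms, which is routine but tedious. A cleaner alternative is to invoke an abstract nerve theorem, such as Weber's nerve theorem for monads with arities, or the nerve theorem of Berger for hypermoment categories (the excerpt already equips each $\CC$ with a hypermoment-category structure via $\vertfunc \colon \CC \to \fstarop$, with elementary objects the edges and stars). Once one checks that $\freemod$ arises as the monad with arities on $\widehat{\CC_\elrm}$ associated to this hypermoment structure, the equivalence $\mathrm{GenOp} \simeq \seg(\CC)$ follows formally from the abstract theorem, sidestepping the axiom verification entirely.
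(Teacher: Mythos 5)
The paper itself does not prove \cref{thm nerve}; it is established by citation to four external papers, so the real question is whether your self-contained outline would actually go through. It would not, for the following reason. The step you describe as ``the tautological colimit presentation $P \cong \colim_{\freemod(G) \to P} \freemod(G)$'' is not tautological: for a full subcategory $\mathcal{A} \subseteq \mathcal{B}$, the canonical cocone $\colim_{a \to b} a \to b$ is an isomorphism for every $b$ precisely when $\mathcal{A}$ is dense in $\mathcal{B}$, which is equivalent to the restricted Yoneda functor (here $N$) being fully faithful. So you are assuming exactly the statement you set out to prove; the elementary colimit identity $\colim_{K \rat G} \freemod(K) \cong \freemod(G)$ only shows that each $NP$ is Segal, and says nothing about density of the objects $\freemod(G)$ among all modular operads (resp.\ wheeled properads, properads). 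This density claim, together with the identification of the essential image, is the actual content of the theorem, and your sketch of the unit argument (reconstructing composition and contraction from Segal maps and checking the axioms ``by uniqueness of the elementary colimit decomposition'') glosses over the genuinely delicate points, e.g.\ graphs with nontrivial automorphisms such as wheels, which is why the cited proofs are long.

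The proposed escape route via an abstract nerve theorem also does not apply directly in three of the four cases. By \cref{rmk Raynor Kock categories}, the functors $\gcat \to \modular$, $\ocat \to \wproperad$, and $\pgcat \to \properad$ are \emph{not} fully faithful, so $\gcat$ is not (equivalent to) a full subcategory of $\modular$, and Weber's or Berger's machinery produces a nerve relative to the fully faithful factorizations $\mathbf{R}$ and $\mathbf{K}$, not relative to $\gcat$ or $\pgcat$. A symptom of the mismatch is that $N\freemod(G)_{G'} = \modular(\freemod(G'),\freemod(G))$ differs from $\gcat(G',G)$, so $N$ does not restrict to the Yoneda embedding along $\freemod$ as an abstract nerve theorem would require. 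One must then still compare $\seg(\mathbf{R})$ with $\seg(\gcat)$ (and likewise for $\mathbf{K}$ and $\pgcat$), and \cref{rmk KR nerves} warns that even the nerve theorems for $\mathbf{R}$ and $\mathbf{K}$ involve ``intricate subtleties'' and are not straightforward applications of the existing abstract theory --- verifying the arities/strong-cartesianness hypothesis is where the work lies. Only the dendroidal case $\operad \simeq \seg(\dendcat)$ is a direct application of Weber's theorem, which is the one case the paper attributes to that source.
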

\begin{proof}
These appear in \cite{HRYfactorizations}, \cite{HRYbook}, \cite{Weber:F2FPRA}, and \cite{HRY-mod1}.
\end{proof}

\begin{slogan}
In the presence of the nerve theorem, one can work entirely at the level of Segal presheaves, rather than with the original operadic structure.
\end{slogan}

\begin{remark}\label{rmk diop cyc nerves}
It is expected that the three missing cases $\diop \to \seg(\pgcatsc)$, $\augcyc \to \seg(\gcatnought)$, and $\cyc \to \seg(\gcatcyc)$ are also equivalences. 
I have been told that the latter two equivalences will appear in the PhD thesis of Patrick Elliott, along with a comparison with the `strict inner-Kan condition.' 
Meanwhile, the nerve theorem for dioperads follows from the nerve theorem for augmented cyclic operads (see \cref{prop cyc diopd} below).
Alternatively, in light of \cref{prop gcatnought ff} (and its dioperadic analogue), it is expected that one can bring to bear the tools of \emph{abstract nerve theory} to recover these three cases.
\end{remark}

The aforementioned abstract nerve theory originated in work of Weber \cite{Weber:F2FPRA} and unpublished work of Leinster.
See also \cite{BergerMelliesWeber:MAAT,BourkeGarner:MT}, or the overview of the key points appearing in \cite[2.1]{Raynor:DLCSM}.
In our restricted setting, one can consider $\widehat{\gcat_\elrm}$ as a category of \emph{graphical species} (\cite[\S4]{JoyalKock:FGNTCSM} and \cite[\S1.1]{Raynor:DLCSM}) and $\widehat{\ocat_\elrm}$ as \emph{digraphical species} \cite[\S2.1]{Kock:GHP}, which are a kind of collection with morphisms built in for the coloring maps.
Then $\modular$ can be considered as the Eilenberg--Moore category of algebras $\operatorname{Alg}(T)$ for the free modular operad monad $T$ on $\widehat{\gcat_\elrm}$. 
For particularly nice monads (strongly cartesian), one can automatically produce from this set-up a full subcategory $\mathbf{\Theta}_T \subset \operatorname{Alg}(T)$ which satisfies a nerve theorem $\operatorname{Alg}(T) \xrightarrow{\simeq} \seg(\mathbf{\Theta}_T) \subset \widehat{\mathbf{\Theta_T}}$.
This applies to produce, for instance, $\simpcat$, $\dendcat$ (see Example 2.14 and Example 4.19 of \cite{Weber:F2FPRA}), and also other important categories such as Joyal's cell category $\mathbf{\Theta}_n$.

\begin{remark}\label{rmk KR nerves}
There are also nerve theorems for the categories $\mathbf{K}$ and $\mathbf{R}$ from \cref{rmk Raynor Kock categories}.
The nerve functors (with a similar definition as above)
\[
	N \colon \modular \to \widehat{\mathbf{R}} \qquad N\colon \properad \to \widehat{\mathbf{K}}
\]
are fully faithful, and the essential image of each is the subcategory of Segal presheaves.
The first of these is \cite[Theorem 8.2]{Raynor:DLCSM} (see also \cite[\S4]{HRY-mod2}), while the second is \cite[Theorem 2.3.9]{Kock:GHP}.
Both of these results utilize the framework for abstract nerve theorems, 
though there are intricate subtleties in both instances, and neither is a straightforward application of existing theory.
\end{remark}

\begin{question}\label{question disconnected}
So far in the main text of this paper, we have not mentioned props at all \cite{MacLane:CA,HackneyRobertson:OCP}, yet these are one of the most widespread and expressive kinds of operadic structures.
Nor have we mentioned wheeled props \cite{MarklMerkulovShadrin} or nc (non-connected) modular operads \cite[2.3.2]{KaufmannWard:FC}. 
All of these structures are based on disconnected graphs.
A major motivation for the development of \cref{def graph map} was to facilitate extensions of the categories $\pgcat$, $\ocat$, and $\gcat$ to categories of disconnected graphs $\pgcat_\mathrm{dis}$, $\ocat_\mathrm{dis}$, and $\gcat_\mathrm{dis}$.
Specifically, we hope for an appropriate notion of \emph{embedding} between disconnected graphs, so that we can simply imitate \cref{def graph map} directly to obtain $\ocat_\mathrm{dis}$ and $\gcat_\mathrm{dis}$ (and similarly a disconnected version of structured subgraph, to obtain $\pgcat_\mathrm{dis}$).
We give a wish-list for such a construction:
\begin{enumerate}
\item Each of the three categories would have a naturally occurring active-inert orthogonal factorization system, with inert maps the disconnected embeddings (or the appropriate maps for $\pgcat_\mathrm{dis}$).
\item For each of these categories, there is a hypermoment category structure as in \cref{def hypermoment}, given by a vertex functor $\vertfunc$ into $\finsetstarop$.
In particular, there is a natural notion of Segal presheaf.
\item Each of these categories would be a dualizable generalized Reedy category in the sense of \cite{bm_reedy}, as is the case for their connected counterparts (\cite[\S6.4]{HRYbook}, \cite[Theorem 1.2]{HRYfactorizations}, \cite[\S2.2]{HRY-mod1})
\item The nerve theorem holds, that is, we may regard the categories of props, wheeled props, and nc modular operads as the categories of Segal presheaves $\seg(\pgcat_\mathrm{dis})$, $\seg(\ocat_\mathrm{dis})$, and $\seg(\gcat_\mathrm{dis})$.
\end{enumerate}
Such categories would provide a sound foundation for an approach to homotopy-coherent versions of each operadic structure mentioned, either by appropriately localizing the Reedy model structure on simplicial presheaves, or by localizing $\infty$-category of Segal presheaves $\seg^\infty(\CC) \subset \fun(\CC^\oprm, \mathcal{S})$ of \cite{ChuHaugseng:HCASC} to impose a Rezk-completeness condition (as in \cite{ChuHaugseng:EIO,ChuHackney}).
We note that an entirely different approach to $\infty$-props has recently been proposed in \cite{HaugsengKock}.
\end{question}

These proposed categories of disconnected graphs should also be related to the double category of graphs of \cite[2.5]{BergerKaufmann:TGA}, which has as a key component the total dissection.
There is also the category of forests from \cite{HeutsHinichMoerdijk} (where each component is a rooted tree), but we are unsure of the precise relationship to expect.

\section{Interactions}
In this final section, we turn to some interactions between the operadic structures that have appeared so far.

\begin{theorem}\label{thm change of presheaves}
Suppose $f\colon \CC \to \DD$ is one of the following three functors:
\[ \mathbf{\Omega} \to \gcatcyc \qquad \pgcatsc \to \gcatnought \qquad \ocat \to \gcat,\]
and consider the associated adjoint string
\[
\begin{tikzcd}[column sep=large]
    \widehat{\DD}  \rar["f^*" description] 
    \rar[phantom, bend right=18, "\scriptscriptstyle\perp"]  
    \rar[phantom, bend left=18, "\scriptscriptstyle\perp"] 
    & \widehat{\CC}
    \lar[bend right=30, "f_!" swap] 
    \lar[bend left=30, "f_*"] 
\end{tikzcd}
\]
where $f^*$ is restriction along $f$, and $f_!$ and $f_*$ are left and right Kan extension.
The following hold:
\begin{enumerate}
\item If $X\in \widehat{\DD}$ is a presheaf, then $X$ is Segal if and only if $f^*X \in \widehat{\CC}$ is Segal.
\item If $Y \in \widehat{\CC}$ is Segal, then so is $f_!Y \in \widehat{\DD}$.\label{segal preservation}
\end{enumerate}
\end{theorem}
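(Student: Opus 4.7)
My plan is to exploit a common structural feature of all three functors: in each case \cref{rmk directed graphs as slice} (or a parallel identification for $\dendcat \to \gcatcyc$) realises $\CC$ as the category of elements of a presheaf $P \in \widehat{\DD}$. Here $P = \opshf$ for $\pgcatsc \to \gcatnought$ and $\ocat \to \gcat$, while for $\dendcat \to \gcatcyc$ one uses a rooting presheaf with $P_G = \eth(G)$. In every case $f$ is a discrete right fibration that preserves inertness and elementary graphs, and there is an equivalence $\widehat{\CC} \simeq \widehat{\DD}_{/P}$ under which $f_!$ is the forgetful functor $(Z \to P) \mapsto Z$ and $f^*$ is the pullback $X \mapsto X \times P$. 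In particular, writing $\tilde G^{\alpha} \in \CC$ for the lift of $G$ determined by $\alpha \in P_G$, we have $(f_!Y)_G = \bigsqcup_{\alpha \in P_G} Y_{\tilde G^{\alpha}}$.

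The proof rests on two preliminary observations:
\begin{itemize}
\item[(A)] The presheaf $P$ is itself a Segal presheaf on $\DD$.
\item[(B)] For each $G \in \DD$ and each $\alpha \in P_G$, the functor $f$ induces an isomorphism $\CC^{\elrm}_{/\tilde G^{\alpha}} \xrightarrow{\sim} \DD^{\elrm}_{/G}$.
\end{itemize}
Observation (A) is a direct unpacking: an orientation of $G$ is an involutive function $A_G \to \{\pm 1\}$, which is equivalent to a compatible family of orientations on the edges and vertex-stars $K \rat G$ (and similarly for rootings). Observation (B) follows from the discrete fibration property: each inert $K \rat G$ in $\DD$ has a unique lift $\tilde K \rat \tilde G^{\alpha}$ obtained by restricting $\alpha$, and every inert elementary with codomain $\tilde G^{\alpha}$ arises this way.

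From (B), part (1) is immediate, since the Segal map of $f^*X$ at $\tilde G^{\alpha}$ is canonically identified with the Segal map of $X$ at $G$, and essential surjectivity of $f$ (every $G$ admits some $\alpha$) lets us pass between the two conditions. For part (2), I would compute the Segal target of $Z = f_!Y$ at $G$:
\[
\lim_{K \rat G} Z_K \;=\; \lim_{K} \bigsqcup_{\beta \in P_K} Y_{\tilde K^{\beta}} \;\overset{(A)}{=}\; \bigsqcup_{\alpha \in P_G} \lim_{K \rat G} Y_{\tilde K^{\alpha|_K}} \;\overset{(B)}{=}\; \bigsqcup_{\alpha \in P_G} \lim_{\tilde K \rat \tilde G^{\alpha}} Y_{\tilde K}.
\]
Here (A) is what allows the reshuffling: a compatible family $(\beta_K)$ of local orientations is nothing but a single global $\alpha \in P_G$, so the limit of coproducts becomes a coproduct of limits; (B) then rewrites each inner indexing category inside $\CC$. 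The Segal condition on $Y$ collapses each inner limit to $Y_{\tilde G^{\alpha}}$, and the coproduct recovers $Z_G = (f_!Y)_G$, proving that the Segal map at $G$ is a bijection.

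The main obstacle is the rooted case $\dendcat \to \gcatcyc$, where the identification $\dendcat \simeq \int_{\gcatcyc} R$ is not explicitly recorded in the paper; one must verify that $G \mapsto \eth(G)$ is a presheaf on $\gcatcyc$ (i.e., a graph map sends rootings to rootings canonically) and that this presheaf is Segal. The other two cases invoke \cref{rmk directed graphs as slice} directly, after which the computation above handles all three uniformly.
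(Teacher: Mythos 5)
Your argument is correct and follows essentially the same route as the paper: the proof given here defers to \cite{Hackney:CGOS}, but the paper explicitly identifies the left Kan extension formula of \cref{prop lke formula} --- exactly your coproduct $\coprod_{\alpha\in P_G} Y_{\tilde{G}^{\alpha}}$ obtained from the discrete-fibration/slice identification of \cref{rmk directed graphs as slice} --- as the main component of part (2), and your steps (A) and (B) supply the remaining ingredients in the intended way. The two verifications you defer (that $G\mapsto\eth(G)$ is a presheaf on $\gcatcyc$ whose category of elements is $\dendcat$ and which is Segal, and that inert elementary maps lift uniquely along the lines of \cref{constr embedding directed}) are genuine but routine, so the proposal is essentially complete.
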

\begin{proof}
Proposition 6.17, 
Theorem 6.23, and 
Proposition 6.24 of \cite{Hackney:CGOS}.
\end{proof}

In light of the equivalence in \cref{rmk directed graphs as slice}, a directed graph is essentially just the data of an undirected graph along with an element of the orientation presheaf $\opshf$.
If $x \in \opshf_G$ where $G$ is an undirected graph, for the moment we will write $G_x$ for the associated directed graph.
If $G$ is an undirected tree and $r\in \eth(G)$, then there is a unique directed structure on $G$ which is a rooted tree with output $r$; we write $G_r$ for this rooted tree.
The main component of \cref{thm change of presheaves}\eqref{segal preservation} is the following description of the left Kan extension (see \cite[\S6.1]{Hackney:CGOS}).

\begin{proposition}\label{prop lke formula}
Let $f\colon \CC \to \DD$ be one of the forgetful functors $\pgcatsc \to \gcatnought$ or $\ocat \to \gcat$. 
If $Z \in \widehat{\CC}$ is $\CC$-presheaf, then the left Kan extension $f_!Z \in \widehat{\DD}$ is given by the formula
\[
	(f_!Z)_G = \coprod_{x\in \opshf_G} Z_{G_x}.
\]
For the functor $f\colon \dendcat \to \gcatcyc$, we have
\[
	(f_!Z)_G = \coprod_{r\in \eth(G)} Z_{G_r}.
\]
\end{proposition}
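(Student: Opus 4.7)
The plan is to recognize each of the three forgetful functors as (equivalent to) the canonical projection from a category of elements, and then to invoke a clean general formula for left Kan extension along such projections.

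In the first two cases, $f \colon \pgcatsc \to \gcatnought$ and $f \colon \ocat \to \gcat$, the identification is already provided by \cref{rmk directed graphs as slice}: we have $\pgcatsc \simeq \elrm_{\gcatnought}(\opshf)$ and $\ocat \simeq \elrm_{\gcat}(\opshf)$, with the projection sending a pair $(G,x)$ to the underlying undirected graph $G$, and with $G_x$ being the directed graph determined by the orientation $x \in \opshf_G$. For the dendroidal case, I would introduce the presheaf $\rho$ on $\gcatcyc$ with $\rho_G \coloneqq \eth(G)$, and verify that $\dendcat \simeq \elrm_{\gcatcyc}(\rho)$. On objects this just records that a rooted tree is an undirected tree with inhabited boundary plus a choice of root arc. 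To make $\rho$ into a presheaf and match morphisms, I would show that, given $\varphi \colon G \to G'$ in $\gcatcyc$ and $r' \in \eth(G')$, pulling back the rooted direction structure on $G'_{r'}$ along $\varphi$ produces a rooted direction structure on $G$ with a unique root $r \in \eth(G)$, and set $\rho(\varphi)(r') \coloneqq r$. The key observation is that each vertex $v$ of $G$ maps under $\hphi$ to a subtree $H_v$ of $G'_{r'}$ which inherits a rooted structure (hence has a unique output boundary arc), and that boundary-compatibility gives a bijection between $\nbhd(v)$ and $\eth(H_v)$, thereby designating a unique output arc at $v$.

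Having set up these identifications, the claim reduces to the following general formula: if $F \colon \elrm_\DD(\Phi) \to \DD$ is the canonical projection from the category of elements of any presheaf $\Phi \in \widehat{\DD}$, then for any $Z \in \widehat{\elrm_\DD(\Phi)}$ one has
\[
(F_! Z)_G = \coprod_{x \in \Phi_G} Z_{(G, x)}.
\]
This is immediate from the standard equivalence $\widehat{\elrm_\DD(\Phi)} \simeq \widehat{\DD}_{/\Phi}$, under which $Z$ corresponds to the pair $(X, X \to \Phi)$ where $X_G = \coprod_{x \in \Phi_G} Z_{(G, x)}$ with coprojections providing the structure map. The restriction functor $F^*$ then corresponds to $Y \mapsto (Y \times \Phi, \pi_2)$, whose left adjoint is visibly the forgetful functor $(X, X \to \Phi) \mapsto X$; this forgetful functor therefore computes $F_!$.

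The main obstacle will be the verification, in the dendroidal case, that pulling back a rooted orientation along a general cyclic tree map yields a rooted orientation, and that the resulting assignment $(\varphi, r') \mapsto r$ is functorial on $\gcatcyc$. The other two cases require no input beyond \cref{rmk directed graphs as slice} once the general adjunction computation above is in hand.
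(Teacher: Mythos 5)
Your proposal is correct and follows essentially the route the paper intends: \cref{rmk directed graphs as slice} identifies these forgetful functors as projections from categories of elements (discrete fibrations), and the coproduct formula is then the standard computation of $f_!$ via the equivalence $\widehat{\elrm_\DD(\Phi)} \simeq \widehat{\DD}_{/\Phi}$, exactly as you describe. The one step you flag in the dendroidal case does go through: the rooted orientations form a subpresheaf of $\opshf|_{\gcatcyc}$ isomorphic to $G \mapsto \eth(G)$, since each $\hphi(\medstar_v)$ is a subtree of the rooted tree $G'_{r'}$ and hence has a unique outgoing boundary arc, which boundary-compatibility transports to a unique output in $\nbhd(v)$, and a finite tree in which every vertex has exactly one output is automatically rooted.
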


\begin{question}
Is the analogue of \cref{thm change of presheaves} also true in the $\infty$-categorical setting? (See \cref{def alg pattern}.)
That is, for these $f$, does $f_! \colon \fun(\CC^\oprm, \mathcal{S}) \to \fun(\DD^\oprm, \mathcal{S})$ restrict to a functor $\seg^\infty(\CC) \to \seg^\infty(\DD)$ between the $\infty$-categories of Segal presheaves from \cite{ChuHaugseng:HCASC}?
\end{question}

As is explained in \cite[\S4.3]{DrummondColeHackney:DKHTCO}, the category of dioperads may be viewed as a certain (non-full) subcategory of augmented cyclic operads $\diop \to \augcyc$.
It is better to describe this as a certain slice category, that is, $\diop \simeq \augcyc_{/\mathsf{IO}}$.
Here, $\mathsf{IO}$ is the terminal object of $\augcyc_{\{i,o\}}$ where the color set $\{i,o\}$ possesses the non-trivial involution.
Another way to say this is that $\mathsf{IO}$ is isomorphic to the image of the terminal dioperad under $\diop \to \augcyc$, and induces an equivalence
\[
	\diop \simeq \diop_{/\ast} \xrightarrow{\simeq} \augcyc_{/\mathsf{IO}}.
\]
The map to $\mathsf{IO}$ of course forces all cyclic operads to have free involution on color sets, and also yields a splitting of the profile of an operation into `inputs' and `outputs.'
This follows a similar approach for wheeled properads $\wproperad \simeq \modular_{/\mathsf{IO}}$ from Example 1.29 of \cite{Raynor:DLCSM}.

We use this to establish the following provisional statement (see \cref{rmk diop cyc nerves}).
\begin{proposition}\label{prop cyc diopd}
The nerve theorem for augmented cyclic operads implies that for dioperads.
\end{proposition}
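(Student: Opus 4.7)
The plan is to transfer the nerve theorem for augmented cyclic operads to dioperads by combining the slice-category identification $\diop \simeq \augcyc_{/\mathsf{IO}}$ with the equivalence $\widehat{\pgcatsc} \simeq \widehat{\gcatnought}_{/\opshf}$ that follows from identifying $\pgcatsc$ as the category of elements of $\opshf$ (\cref{rmk directed graphs as slice}). The key preliminary step is to identify the $\augcyc$-nerve $N(\mathsf{IO})$ with the orientation presheaf $\opshf \in \widehat{\gcatnought}$: an element of $N(\mathsf{IO})_G = \augcyc(\freecyc(G), \mathsf{IO})$ is forced on vertices (since $\mathsf{IO}$ is terminal in $\augcyc_{\{i,o\}}$), so what remains is precisely an involutive function $A_G \to \{i,o\}$, i.e., an element of $\opshf_G$.

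Slicing the hypothesized equivalence $\augcyc \xrightarrow{\simeq} \seg(\gcatnought)$ over $\mathsf{IO}$ then gives
\[
	\diop \simeq \augcyc_{/\mathsf{IO}} \xrightarrow{\simeq} \seg(\gcatnought)_{/\opshf}.
\]
Next I would show that the equivalence $\widehat{\pgcatsc} \simeq \widehat{\gcatnought}_{/\opshf}$ restricts to an equivalence $\seg(\pgcatsc) \simeq \seg(\gcatnought)_{/\opshf}$. Writing $f\colon \pgcatsc \to \gcatnought$ for the forgetful functor, a presheaf $Y \in \widehat{\pgcatsc}$ corresponds to $f_!Y \to \opshf$, and \cref{prop lke formula} gives the explicit formula $(f_!Y)_G = \coprod_{x\in \opshf_G} Y_{G_x}$. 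The forward direction (if $Y$ is Segal then $f_!Y$ is Segal) is immediate from the second part of \cref{thm change of presheaves}. For the reverse direction, given $X \to \opshf$ with $X$ Segal, the corresponding $\pgcatsc$-presheaf is $Y_{G'} = X_G \times_{\opshf_G} \{x\}$, where $G$ is the undirected graph underlying $G'$ with orientation $x$; the Segal condition for $Y$ at $G'$ then follows by taking fibers over a compatible system of orientations on the elementary inerts into $G'$, using that both $X$ and $\opshf$ satisfy the Segal condition. A separate verification is required that $\opshf$ is itself Segal: an involutive function $A_G \to \{+1,-1\}$ is determined by its restrictions to the arcs in each $\nbhd(v)$ and each edge, and these restrictions always glue uniquely.

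Finally, the composite equivalence $\diop \simeq \seg(\pgcatsc)$ must be identified with the dioperadic nerve. For $P \in \diop$ with underlying augmented cyclic operad $P'$ and canonical map $P' \to \mathsf{IO}$, and for a directed tree $G'$ with underlying undirected tree $G$ carrying orientation $x_{G'}$, a direct unwinding shows that the dioperadic nerve value $NP_{G'}$ is the fiber of $\augcyc(\freecyc(G), P') \to \augcyc(\freecyc(G), \mathsf{IO}) = \opshf_{G}$ over $x_{G'}$, which matches the value at $G'$ of the $\pgcatsc$-presheaf corresponding to $(NP' \to \opshf) \in \seg(\gcatnought)_{/\opshf}$. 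The main obstacle is the reverse direction of the Segal compatibility in the previous paragraph: establishing that fibers of morphisms into $\opshf$ with Segal domain give Segal $\pgcatsc$-presheaves requires the separate verification that $\opshf$ is Segal together with careful bookkeeping of the correspondence between elementary inerts on the directed and undirected sides. Once this is in place, the remaining steps are formal.
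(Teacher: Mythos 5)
Your proposal is correct and shares the paper's skeleton---the identification $N(\mathsf{IO}) \cong \opshf$, slicing the hypothesized equivalence $\augcyc \simeq \seg(\gcatnought)$ over $\mathsf{IO}$, and the presheaf-level equivalence $\widehat{\pgcatsc} \simeq \widehat{\gcatnought}_{/\opshf}$ from \cref{rmk directed graphs as slice}---but it diverges in the endgame. You set out to prove the stronger statement that this last equivalence restricts to an equivalence $\seg(\pgcatsc) \simeq \seg(\gcatnought)_{/\opshf}$, which forces you to handle exactly the direction you flag as the main obstacle: that the fiber over $x \in \opshf_G$ of a Segal presheaf $X \to \opshf$ is a Segal $\pgcatsc$-presheaf. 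Your sketch of that step is sound (it needs that $\opshf$ is itself Segal, which you verify, plus the comparison of $\pgcatsc^\elrm_{/G_x}$ with $\gcatnought^\elrm_{/G}$ and commutation of the Segal limit with fibers), and you correctly note the further need to identify the resulting composite equivalence with the dioperadic nerve. The paper sidesteps both of these steps with a purely formal argument: it places the nerve functors into a commutative diagram over the fully faithful inclusions $\seg(\CC) \hookrightarrow \widehat{\CC}$, deduces that $N \colon \diop \to \seg(\pgcatsc)$ is fully faithful by left cancellation, and obtains essential surjectivity from \cref{thm change of presheaves}\eqref{segal preservation} (so $f_!X$ is Segal, hence isomorphic over $N(\mathsf{IO})$ to the image of some dioperad $D$) combined with the observation that $\seg(\pgcatsc) \to \seg(\gcatnought)_{/N(\mathsf{IO})}$, being a restriction of an equivalence, is injective on isomorphism classes---so one never has to show that the fiber construction preserves Segality. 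Your route buys the sharper, unconditional statement $\seg(\pgcatsc) \simeq \seg(\gcatnought)_{/\opshf}$ (and in effect reproves part of \cref{thm change of presheaves}), at the cost of the fiberwise Segal analysis and the final nerve identification; the paper's route is shorter, using only the single implication already recorded in \cref{thm change of presheaves}.
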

\begin{proof}
Let $f\colon \pgcatsc \to \gcatnought$ be the functor forgetting the directed structure. 
Notice that $N(\mathsf{IO}) \cong \opshf$ is nothing but the orientation $\gcatnought$-presheaf from \cref{rmk directed graphs as slice}.
We then have the following commutative diagram of categories, where on presheaves we use that $f_!(\ast) = \opshf$ (by inspection, or application of \cref{prop lke formula}).
\[ \begin{tikzcd}
\diop \rar{\simeq} \dar{N} & \augcyc_{/\mathsf{IO}} \rar \dar & \augcyc \dar{N} \\
\seg(\pgcatsc) \rar \dar[hook,"\text{f.f.}"] & \seg(\gcatnought)_{/N(\mathsf{IO})} \rar\dar[hook,"\text{f.f.}"]  & \seg(\gcatnought) \dar[hook,"\text{f.f.}"] \\
\widehat{\pgcatsc} \rar{\simeq} & \widehat{\gcatnought}_{/\opshf} \rar & \widehat{\gcatnought}
\end{tikzcd} \]
The equivalence on the top left was mentioned just above, while that on the bottom left is a standard fact on slices of presheaf categories (see, for instance, Exercise 8 in Chapter III of \cite{MaclaneMoerdijk:SGL}) using the second equivalence of categories appearing in \cref{rmk directed graphs as slice}.
If $N\colon \augcyc \to \seg(\gcatnought)$ assumed to be an equivalence (that is, if the nerve theorem holds for augmented cyclic operads), then the induced map on slices $\augcyc_{/\mathsf{IO}} \to \seg(\gcatnought)_{/N(\mathsf{IO})}$ is also an equivalence.
By the left cancellation property for fully faithful functors, $N\colon \diop \to \seg(\pgcatsc)$ is fully faithful.
But this functor is also essentially surjective: if $X\in \seg(\pgcatsc)$, then we know $f_!X \to N(\mathsf{IO})$ in $\seg(\gcatnought)_{/N(\mathsf{IO})}$ is isomorphic to the image of some dioperad $D\in \diop$.
We can conclude that $X \cong N(D)$ since $\seg(\pgcatsc) \to \seg(\gcatnought)_{/N(\mathsf{IO})}$ is injective on isomorphism classes of objects.
Hence $N\colon \diop \to \seg(\gcatnought)$ is an equivalence, as desired.
\end{proof}

Essentially the same argument shows that the nerve theorem for modular operads from \cite{HRY-mod2} implies that for wheeled properads from \cite{HRYfactorizations}.

If $\CC$ is a category, write $\spshf{\CC} \coloneqq \fun(\CC^\oprm, \widehat{\simpcat})$ for the category of \emph{simplicial} presheaves.
The following question was posed by Drummond-Cole and the author \cite[Remark 6.8]{DrummondColeHackney:DKHTCO}, and has implications in the theory of cyclic 2-Segal objects \cite[Remark 5.0.20]{Walde:2SSIIO}.

\begin{question}
Let $f\colon \dendcat \to \gcatcyc$ be the functor which forgets the root.
Consider the operadic model structure on dendroidal sets $\widehat{\dendcat}$ from \cite{CisinskiMoerdijk:DSMHO} and the dendroidal Rezk model structure on $\spshf{\dendcat}$ from \cite[\S6]{CisinskiMoerdijk:DSSIO}.
We have two adjoint strings associated to the functor $f$:
\[
\begin{tikzcd}[column sep=large]
    \widehat{\gcatcyc}  \rar["f^*" description] 
    \rar[phantom, bend right=18, "\scriptscriptstyle\perp"]  
    \rar[phantom, bend left=18, "\scriptscriptstyle\perp"] 
    & \widehat{\dendcat}
    \lar[bend right=30, "f_!" swap, start anchor = {[yshift=1.5ex]west}, end anchor = {[yshift=1.5ex]east}] 
    \lar[bend left=30, "f_*", start anchor = {[yshift=-1.5ex]west}, end anchor = {[yshift=-1.5ex]east}] 
&
    \spshf{\gcatcyc}  \rar["f^*" description] 
    \rar[phantom, bend right=18, "\scriptscriptstyle\perp"]  
    \rar[phantom, bend left=18, "\scriptscriptstyle\perp"] 
    & \spshf{\dendcat}
    \lar[bend right=30, "f_!" swap, start anchor = {[yshift=1.5ex]west}, end anchor = {[yshift=1.5ex]east}] 
    \lar[bend left=30, "f_*", start anchor = {[yshift=-1.5ex]west}, end anchor = {[yshift=-1.5ex]east}] 
\end{tikzcd}
\]
Is the endo-adjunction $f^*f_! \dashv f^*f_*$ on $\widehat{\dendcat}$ (resp.\ on $\spshf{\dendcat}$) a \emph{Quillen} adjunction?
The formula from \cref{prop lke formula} may well help answer this question.
If $f^*f_!$ is a left Quillen functor, then the adjoint string lifting results from \cite{DrummondColeHackney:CERIMS} will imply the existence of a lifted model structure on $\widehat{\gcatcyc}$ (resp.\ on $\spshf{\gcatcyc}$).
Further, this should yield a Quillen equivalence with the category of simplicially-enriched cyclic operads, lifting those for simplicially-enriched operads from \cite{CisinskiMoerdijk:DSSO}.
\end{question}
For the potential relationship with cyclic 2-Segal objects mentioned in \cite{Walde:2SSIIO}, one should instead use planar and non-symmetric versions of trees and (cyclic) operads everywhere (including the model structure on planar dendroidal sets from \cite[\S8.2]{Moerdijk:LDS}).
It is likely that the planar/non-symmetric versions follow from the symmetric versions via an appropriate slicing, as in \cite{Gagna:Planar}.

\begin{question}
In \cite[Corollary 8.14]{Raynor:DLCSM}, Raynor gives a model structure
on the category of simplicial presheaves $\spshf{\mathbf{R}}$
where the fibrant objects are the projectively fibrant presheaves which satisfy a weak version of the Segal condition. 
Similarly, the category $\spshf{\gcat}$ admits Segal-type model structure, obtained by taking a left Bousfield localization of the projective model structure (a variation of this appears in \cite[Proposition 3.10]{HRY-mod1}).
It would be interesting to know if the restriction map 
\[
	\spshf{\mathbf{R}} \to \spshf{\gcat}
\]
is a right Quillen equivalence with respect to these model structures.
This would reflect the equivalence of categories $\seg(\mathbf{R}) \to \seg(\gcat)$ between ordinary simplicial presheaves (as follows from \cref{rmk KR nerves} and \cref{thm nerve}).
\end{question}

\appendix

\section{Proofs}

\subsection{Full tree maps preserve intersections}\label{subsec full tree intersections}
In this section, we show that it is automatic that full tree maps preserve intersections.
This was shown in a round-about way in \cite{Hackney:CGOS}, which went through the properadic graphical category.
Here we give a elementary direct proof, that deals only with trees.
It is important that our notion of boundary-compatibility refers to involutive maps $A_G \to A_{G'}$ rather than maps $E_G \to E_{G'}$ (compare \cite[Definition 1.12]{HRY-cyclic}).

\begin{lemma}\label{lem no overlap}
Suppose $G$ and $G'$ are undirected trees, and $(\varphi_0,\varphi_1)$ is a tree map from $G$ to $G'$.
If $u \neq v$ are distinct vertices of $G$, then the subtrees $\varphi_1(u)$ and $\varphi_1(v)$ of $G'$ do not share any vertices in common.
\end{lemma}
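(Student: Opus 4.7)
The plan is strong induction on the path-distance $n = d_G(u,v)$ in the tree $G$, aiming at $V_{H_u} \cap V_{H_v} = \emptyset$, where $H_u = \varphi_1(u)$ and $H_v = \varphi_1(v)$ are viewed as subtrees of $G'$. We may assume both subtrees have at least one vertex, as otherwise the claim is immediate.

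For the base case $n = 1$, let $\{a, a^\dagger\}$ be the edge in $G$ joining $u$ and $v$, with $a \in \nbhd(v)$ and $a^\dagger \in \nbhd(u)$. The boundary-compatibility identity $\eth(\varphi_1(z)) = \varphi_0(\nbhd(z))^\dagger$ in $\mathbb{N} A_{G'}$, applied at $z = u$ and $z = v$, pinpoints that the image edge $e' = \{\varphi_0(a), \varphi_0(a)^\dagger\}$ in $G'$ has exactly one endpoint $y = t(\varphi_0(a)^\dagger)$ in $V_{H_u} \setminus V_{H_v}$ and one endpoint $x = t(\varphi_0(a))$ in $V_{H_v} \setminus V_{H_u}$. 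If $w \in V_{H_u} \cap V_{H_v}$, then $w, y, x$ are three distinct vertices of the tree $G'$; the paths $P_{G'}(w,y) \subseteq H_u$ and $P_{G'}(w,x) \subseteq H_v$ exist, and $P_{G'}(y,x) = e'$ has no interior vertex. The tree trichotomy for three vertices (one lies between the other two, or there is a Steiner branch point) contradicts in each case one of: $y \notin V_{H_v}$, $x \notin V_{H_u}$, or $e'$ having no interior vertex.

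For the inductive step $n \geq 2$, let $u = v_0, v_1, \ldots, v_n = v$ be the $G$-path, and suppose $w \in V_{H_u} \cap V_{H_v}$; the inductive hypothesis gives $V_{H_{v_j}} \cap V_{H_{v_k}} = \emptyset$ for all distinct $v_j, v_k$ at $G$-distance less than $n$. For each $i$, the base-case analysis supplies an edge $e_i' = \{\varphi_0(a_i), \varphi_0(a_i)^\dagger\}$ in $G'$ with endpoints $y_i \in V_{H_{v_{i-1}}} \setminus V_{H_{v_i}}$ and $x_i \in V_{H_{v_i}} \setminus V_{H_{v_{i-1}}}$. I would form the closed walk $W$ in $G'$ that traverses $H_{v_0}$ from $w$ to $y_1$, crosses $e_1'$ to $x_1$, traverses $H_{v_1}$ from $x_1$ to $y_2$, and so on, closing up through $H_{v_n}$ from $x_n$ back to $w$; its length is at least $n \geq 2$, so $W$ is non-trivial. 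The target is to show $W$ is cyclically reduced, contradicting the tree structure of $G'$. Cancellations internal to a sub-path are excluded by reducedness of tree paths; cancellations at a sub-path/$e_i'$ transition would force $x_i$ or $y_i$ into the wrong vertex set; in the degenerate case that an intermediate sub-path has length zero, a cancellation between consecutive $e_{i-1}', e_i'$ would force $\varphi_0(a_i^\dagger) = \varphi_0(a_{i-1})$, contradicting the injectivity of $\varphi_0$ on $\nbhd(v_{i-1})$ forced by the multiset form of boundary compatibility (both $a_i^\dagger$ and $a_{i-1}$ lie in $\nbhd(v_{i-1})$); and a wrap-around cancellation at $w$ would produce a further common vertex in $V_{H_{v_0}} \cap V_{H_{v_n}}$, with iteration eventually exhausting one of the endpoint sub-paths and leading to a cancellation of one of the previous types that involves the pair $(v_0, v_{n-1})$ or $(v_1, v_n)$ at distance $n-1$, contradicting the inductive hypothesis.

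The main obstacle I expect is the bookkeeping of the iterated wrap-around cancellations: phrasing $W$ as a cyclic word and invoking that non-trivial cyclically reduced loops do not exist in a tree makes this cleaner, but one still has to carefully track how successive cancellations consume the two end sub-paths in $H_{v_0}$ and $H_{v_n}$, and verify that the terminal unavoidable cancellation always reduces to an appeal to the inductive hypothesis.
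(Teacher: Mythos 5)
Your argument reaches the same contradiction as the paper's proof---a closed walk in the tree $G'$ built by chaining paths through the image subtrees $\varphi_1(v_0),\dots,\varphi_1(v_n)$ along the images of the connecting edges---but the execution is genuinely different. The paper argues directly, with no induction on distance: it concatenates the unique paths $P_S, P_1,\dots,P_\ell, P_T^\dagger$ and declares the result a cycle. Read against the standard meaning of ``tree,'' that one line hides exactly the backtracking issues you isolate: a concatenation of geodesics between three vertices of a tree is always a closed walk, and it only yields a contradiction once one knows the walk does not fully reduce. Your base case replaces the explicit cycle by the median argument, which closes this cleanly: the median of $w$, $y$, $x$ must lie on the single edge $e'$, forcing $y\in V_{H_v}$ or $x\in V_{H_u}$, both excluded by boundary-compatibility. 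Your inductive step then pays for the wrap-around cancellations at $w$ with the inductive hypothesis at distance $n-1$, a device the paper never needs because it never confronts reducedness. The essential technical inputs are the same in both proofs: the multiset form of boundary-compatibility (which forces $\varphi_0$ to be injective on each $\eth(\medstar_{v_i})$, ruling out cancellation of consecutive crossing edges) and the membership facts $y_i\in V_{H_{v_{i-1}}}\setminus V_{H_{v_i}}$, $x_i\in V_{H_{v_i}}\setminus V_{H_{v_{i-1}}}$. What your approach buys is a proof that is actually airtight under a reduced/simple notion of cycle; what it costs is the bookkeeping you acknowledge.

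Two loose ends, both repairable without new ideas. First, your chain tacitly assumes every intermediate image $\varphi_1(v_i)$ contains a vertex; if some $\varphi_1(v_i)$ is an edge then $x_i$ is undefined, and in that case boundary-compatibility forces $\varphi_0(a_{i+1})=\varphi_0(a_i)$, so the crossings $e_i'$ and $e_{i+1}'$ coincide and $v_i$ should simply be collapsed out of the chain before building the walk. Second, the iterated wrap-around reduction does terminate: each cancellation at $w$ shortens the two end sub-paths, and when one is exhausted the next forced cancellation is either a junction cancellation (excluded by the membership facts, placing $x_1$ or $y_n$ in a forbidden vertex set and contradicting the inductive hypothesis for the pair $(v_1,v_n)$ or $(v_0,v_{n-1})$ at distance $n-1$) or, in the $n=2$ corner where both end sub-paths vanish simultaneously, a crossing--crossing cancellation excluded by injectivity of $\varphi_0$ on $\eth(\medstar_{v_1})$. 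So your flagged concern is real but surmountable, and the proposal is correct.
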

\begin{proof}
By way of contradiction, suppose that $w$ is a vertex in both subtrees $S = \varphi_1(u)$ and $T=\varphi_1(v)$. 
For simplicity, we first treat the case when $u$ and $v$ are adjacent vertices.
That is, there is a (unique) arc $\tilde a \in D_G$ with $t(\tilde a) = v$ and $t(\tilde a^\dagger) = u$.
Let $a = \varphi_0(\tilde a)$. 
By boundary-compatibility we have $a \in \eth(S) = \eth(\varphi_1(u))$ and $a^\dagger \in \eth(T) = \eth(\varphi_1(v))$.
Since $S$ is a tree, there is a unique path in $S$
\[
	P_S \coloneqq v_0 a_0 v_1 a_1 \cdots v_n a_n
\]
with $v_0 = w$ and $a_n = a$.
By \emph{path in $S$} we mean that $v_i \in V_S$, $a_i \in A_S$, and these elements satisfy the equations $t(a_i) = v_{i+1}$ and $t(a_i^\dagger) = v_i$ whenever both sides are defined in $S$ (see \cref{def paths trees}).
Likewise, there is a unique path in $T$
\[
	P_T \coloneqq v_0' a_0' v_1' a_1' \cdots v_m' a_m'
\]
with $v_0' = w$ and $a_m' = a^\dagger$.

Then we have a composite path in $G$
\begin{align*}
P_SP_T^\dagger & = v_0 a_0 v_1 a_1 \cdots v_n a_n v_m' (a_{m-1}')^\dagger v_{m-1}'  \cdots (a_1')^\dagger v_1' (a_0')^\dagger v_0' \\
&= v_0 a_0 v_1 a_1 \cdots v_n a_n v_{n+1} a_{n+1} v_{n+2} \cdots a_{n+m-1} v_{n+m} a_{n+m} v_{n+m+1}
\end{align*}
where $v_{n+k} = v'_{m-k+1}$ for $1 \leq k\leq m+1$ and $a_{n+j} = (a'_{m-j})^\dagger$ for $0\leq j \leq m$.
Since $v_0 = w = v_{n+m+1}$, this composite path $P_SP_T^\dagger$ is a cycle in $G'$. 
This is impossible, since $G'$ is a tree.

The general case, when $u$ and $v$ are not adjacent in $G$, is proved using similar ideas. 
Namely, there is a unique path in $G$
\[
	u_0 \tilde a_0 u_1 \tilde a_1 \dots \tilde a_\ell u_{\ell+1}
\]
with $u_0 = u$ and $u_{\ell +1} = v$.
We define $P_S$ to be the path in $S$ starting at $w$ and ending at $\varphi_0(\tilde a_0)$, and $P_T$ to be the path in $T$ starting at $w$ and ending at $\varphi_0(\tilde a_\ell^\dagger)$.
For $1 \leq j \leq \ell$ there is a unique path $P_j$ in $\varphi_1(u_j)$ which starts with $\varphi_0(\tilde a_{j-1})$ and ends with $\varphi_0(\tilde a_j)$.
The composite path $P_S P_1 \cdots P_\ell P_T^\dagger$ will be a cycle at $w$, which again is impossible since $G'$ is a tree.
\end{proof}

\begin{lemma}\label{intersection lemma}
Suppose $G$ and $G'$ are undirected trees, and $(\varphi_0,\hphi)$ is a full tree map from $G$ to $G'$.
If $S, T \in \emb(G)$ are subtrees of $G$ so that $\hphi(S)$ and $\hphi(T)$ have a vertex $w \in V_{G'}$ in common, then there is a unique vertex $v\in V_S \cap V_T \subset V_G$ so that $w\in \hphi(\medstar_v)$.
\end{lemma}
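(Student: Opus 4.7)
My plan is to reduce the claim to \cref{lem no overlap} by first establishing the vertex-set identity
\[ V_{\hphi(S)} \;=\; \bigcup_{v \in V_S} V_{\hphi(\medstar_v)} \]
for every subtree $S \in \emb(G)$. Once this is in hand, the lemma follows immediately. By hypothesis $w$ is a vertex of both $\hphi(S)$ and $\hphi(T)$, so applying the identity to $S$ and to $T$ yields some $v \in V_S$ and some $v' \in V_T$ with $w$ a vertex of both $\hphi(\medstar_v)$ and $\hphi(\medstar_{v'})$. Viewing $\varphi_1 \coloneqq \hphi|_{V_G}$ as a tree map in the sense of \cref{def tree map}, \cref{lem no overlap} forbids this unless $v = v'$; that common value lies in $V_S \cap V_T$ and is unique by the same reasoning.

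To prove the identity, I would induct on $n = |V_S|$. If $n = 0$, then $S$ is an edge and, since tree maps send edges to edges (as noted right after \cref{def tree map}), $\hphi(S)$ is also an edge; both sides are then empty. If $n = 1$, then $S = \medstar_v$ for the unique $v \in V_S$, and the identity is trivial. For $n \geq 2$, pick a leaf vertex $v$ of the subtree $S$ (one with a single $S$-neighbor $u$) and let $S'$ be the subtree of $S$ on the vertex set $V_S \setminus \{v\}$. Then $S' \cup \medstar_v = S$ as subtrees of $G$, and the two pieces overlap since they share the edge joining $v$ to $u$. By the union-preservation axiom for full tree maps, $\hphi(S) = \hphi(S') \cup \hphi(\medstar_v)$ in $\emb(G')$. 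Since $G'$ is a tree and unions of overlapping subtrees are computed vertex-wise, the vertex set of the right-hand side is $V_{\hphi(S')} \cup V_{\hphi(\medstar_v)}$, and the inductive hypothesis applied to $S'$ completes the calculation.

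The main obstacle I expect is purely bookkeeping around the inductive decomposition: verifying that the leaf decomposition $S = S' \cup \medstar_v$ truly takes place in $\emb(G)$ (so that one is entitled to invoke union-preservation) and that the resulting union in $\emb(G')$ is computed at the level of vertex sets. All the genuinely combinatorial content—ruling out cycles produced by two stars overlapping in an image—is already packaged into \cref{lem no overlap}, and everything else is forced by the axioms of a full tree map.
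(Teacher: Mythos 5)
Your proposal is correct and takes essentially the same approach as the paper: both decompose $S$ (and $T$) as an iterated union of stars, use union-preservation to conclude that $w$ lies in $\hphi(\medstar_v)$ for some vertex $v$ of the subtree, and then invoke \cref{lem no overlap} applied to $(\varphi_0,\hphi|_{V_G})$ for both existence of the common vertex and its uniqueness. The only difference is one of detail: the paper simply asserts that $\hphi(S)$ is the union of the images of the stars of $S$, whereas you verify this vertex-set identity by an explicit leaf-removal induction.
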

\begin{proof}
Suppose $w$ is a vertex of $\hphi(S) \cap \hphi(T)$. 
Writing $S$ as an iterated union of distinct stars
\[
	S = ((\cdots ((\medstar^1 \cup \medstar^2) \cup \medstar^3) \cup \cdots )\cup \medstar^{n-1}) \cup \medstar^n
\]
we must have $w\in \hphi(\medstar^k)$ for some $k$ since $\hphi(S)$ is the union of the $\hphi(\medstar^k)$.
Likewise, $w\in \hphi(\medstar)$ for some $\medstar \rat T$.
By \cref{lem no overlap} applied to $(\varphi_0, \hphi|_{V_G})$, we conclude that $\medstar = \medstar^k$.
Then $v$ is the vertex of the star $\medstar = \medstar^k$ which is in both $S$ and $T$.
Uniqueness is guaranteed by \cref{lem no overlap}.
\end{proof}

For subtrees of a graph $G$, the graph $S\cap T$ is either empty or again a subtree.
In particular, if $S\cap T$ does not contain a vertex, then it is either empty or a single edge.

\begin{proposition}\label{prop intersection preservation}
Full tree maps preserve intersections between overlapping subtrees.
That is, if $(\varphi_0, \hphi) \colon G \to G'$ is a full tree map and $S, T \in \emb(G)$ are subtrees with $S\cap T$ non-empty, then $\hphi(S\cap T) = \hphi(S) \cap \hphi(T)$.
\end{proposition}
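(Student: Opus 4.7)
The containment $\hphi(S\cap T)\le\hphi(S)\cap\hphi(T)$ is the easy direction: union-preservation makes $\hphi$ monotone on $\emb(-)$ (if $A\le B$ with $A$ non-empty, then $A$ and $B$ overlap and $A\cup B=B$, so $\hphi(B)=\hphi(A)\cup\hphi(B)\ge\hphi(A)$), and the set-theoretic intersection of two overlapping subtrees of the tree $G'$ is again a subtree and supplies the greatest lower bound in $\emb(G')$. Setting $R\coloneqq\hphi(S)\cap\hphi(T)$, I am reduced to proving the reverse inclusion $R\le\hphi(S\cap T)$, which I do by checking the corresponding inclusions of vertex sets and of edge sets.

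For vertex sets, the Intersection Lemma (\cref{intersection lemma}) does the work: given $w\in V_R=V_{\hphi(S)}\cap V_{\hphi(T)}$, it supplies a (unique) $v\in V_S\cap V_T=V_{S\cap T}$ with $w\in V_{\hphi(\medstar_v)}$. The subgraph condition of \cref{def subgraph} forces $\nbhd(v)\subseteq A_H$ whenever $v\in V_H$, so $\medstar_v\le S\cap T$ in $\emb(G)$; monotonicity of $\hphi$ then places $w$ inside $V_{\hphi(S\cap T)}$.

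For edge sets, I take $e'\in E_R$ and split on whether some endpoint vertex of $e'$ in $G'$ belongs to $V_R$. If it does, then by the vertex step such an endpoint $w$ lies in $V_{\hphi(S\cap T)}$, and the same subgraph axiom forces the incident edge $e'$ to lie in $E_{\hphi(S\cap T)}$. Otherwise no endpoint of $e'$ in $G'$ lies in $V_R$; I then claim $V_R=\varnothing$. Indeed, were some $w_0\in V_R$ to exist, connectedness of the subtree $R$ would force the (unique) $G'$-path from $w_0$ to $e'$ to lie entirely inside $R$, and the vertex of this path adjacent to $e'$ would supply an endpoint of $e'$ in $V_R$, contradicting the hypothesis. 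Hence $R$ has no vertices, so as a non-empty connected subgraph of $G'$ it must be a single edge, necessarily $e'$ itself. Since $\hphi(S\cap T)$ is a non-empty subtree with $\hphi(S\cap T)\le R=e'$, it must equal $e'$, and in particular $e'\in E_{\hphi(S\cap T)}$.

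The one genuinely non-formal step is the tree-path argument in the second edge case; it really does use that $G'$ is simply connected (the analogous statement fails for general graphs, which is presumably why this subsection is confined to trees). Everything else is bookkeeping: the Intersection Lemma, monotonicity of $\hphi$, and the axiom that a vertex of a subgraph drags its entire neighborhood along with it.
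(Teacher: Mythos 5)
Your proof is correct and follows essentially the same route as the paper's: monotonicity of $\hphi$ (from union-preservation) gives the easy containment, \cref{intersection lemma} supplies the vertices of $\hphi(S)\cap\hphi(T)$, and the vertex-free case is handled by observing the intersection must be a single edge. The only difference is presentational --- the paper invokes the fact that a subtree containing a vertex is determined by its vertex set, whereas you verify the edge containment by hand.
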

\begin{proof}
Since $\hphi \colon \emb(G) \to \emb(G')$ preserves unions, it also preserves the partial order.
As $S\cap T$ is inhabited, it is a subtree of both $S$ and $T$, so 
\begin{equation}\label{eq order preservation}
\hphi(S\cap T) \subseteq \hphi(S) \cap \hphi(T).
\end{equation} 
Notice that both sides are subtrees of $G'$.
If $w$ is a vertex in $\hphi(S) \cap \hphi(T)$, then $w$ is also in $\hphi(S\cap T)$ by \cref{intersection lemma}.
Since subtrees containing a vertex are completely determined by their vertices, we conclude that \eqref{eq order preservation} is an equality.
If $\hphi(S) \cap \hphi(T)$ does not contain any vertices, then it is a single edge, so again \eqref{eq order preservation} is an equality.
\end{proof}

\subsection{Tree maps and full tree maps coincide}\label{sec trees vs full trees}
Our goal is to prove that there is no real difference between tree maps and full tree maps from \cref{def tree map}.

\begin{lemma}\label{lem subtree union boundary}
Suppose $S$ and $T$ are subtrees of a tree $G$, and that $S$ and $T$ intersect at a single edge $e = [a, a^\dagger]$, where $a \in \eth(S)$ and $a^\dagger \in \eth(T)$.
If neither $S$ nor $T$ is an edge, then
\[
	\eth(S \cup T) = (\eth(S) \backslash \{a\}) \amalg (\eth(T) \backslash \{a^\dagger\}).
\]
If $S$ is an edge then $\eth(S\cup T) = \eth(T)$ and if $T$ is an edge then $\eth(S\cup T) = \eth(S)$.
\end{lemma}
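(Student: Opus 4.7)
The plan is to unwind the definition $\eth(H) = A_H \setminus D_H$ together with the description of the subgraph $S \cup T$ (which has arcs $A_S \cup A_T$ and $D_{S\cup T} = D_S \cup D_T$), and to use the hypothesis that $S \cap T$ is the single edge $e = [a, a^\dagger]$, i.e.\ $E_S \cap E_T = \{[a,a^\dagger]\}$ (which forces $V_S \cap V_T = \varnothing$, since an edge has no vertices).

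First I would dispose of the cases where $S$ or $T$ is an edge. If $S$ is an edge, then $S$ has no vertices and $A_S = \{a,a^\dagger\} \subseteq A_T$ (since $a^\dagger \in \eth(T) \subseteq A_T$ and $A_T$ is involutive). Hence $S$ is already a subgraph of $T$, so $S\cup T = T$ and the identity is immediate. The case where $T$ is an edge is symmetric.

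Now suppose neither $S$ nor $T$ is an edge. I would first check that the right-hand side is actually a disjoint union: if $x$ lay in both $\eth(S)\setminus\{a\}$ and $\eth(T)\setminus\{a^\dagger\}$, then $x \in A_S \cap A_T$, so the edge $[x,x^\dagger]$ lies in $E_S \cap E_T = \{[a,a^\dagger]\}$, forcing $x \in \{a,a^\dagger\}$ and contradicting the exclusions. Then for the inclusion $\eth(S)\setminus\{a\} \subseteq \eth(S \cup T)$, I would take $x \in \eth(S)\setminus\{a\}$; clearly $x \in A_{S\cup T}$ and $x \notin D_S$, so the only thing to verify is $x \notin D_T$. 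If instead $x \in D_T \subseteq A_T$, then the same edge argument gives $x \in \{a,a^\dagger\}$, hence $x = a^\dagger$; but $a^\dagger \in \eth(T)$ means $a^\dagger \notin D_T$, a contradiction. The other inclusion is symmetric.

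Conversely, given $x \in \eth(S\cup T)$, I may assume by symmetry $x \in A_S$, so $x \in \eth(S)$ (as $x \notin D_S$). If $x \neq a$, we are done; if $x = a$, then $a \notin D_T$ (since $a \notin D_{S\cup T}$) and $a \in A_T$ (by involution from $a^\dagger \in A_T$), giving $a \in \eth(T)\setminus\{a^\dagger\}$ since $a \neq a^\dagger$ by fixpoint-freeness. The main obstacle is essentially notational: juggling the involutive structure and keeping track of which of $a, a^\dagger$ can appear in which of $D_S, D_T$; but once the edge-intersection hypothesis is translated into $E_S \cap E_T = \{[a,a^\dagger]\}$, everything reduces to routine set manipulation.
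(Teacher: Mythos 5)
Your proof is correct and takes essentially the same approach as the paper's: both are direct unwindings of $\eth(H)=A_H\setminus D_H$ and of the union subgraph, with the key step in each case being that an arc lying in both $A_S$ and $A_T$ must be $a$ or $a^\dagger$ because $E_S\cap E_T=\{e\}$. The only cosmetic difference is that the paper phrases the case analysis via the target map $t$ and membership of vertices in $V_S$, $V_T$, $V_{S\cup T}$, whereas you work directly with the sets $D_S$ and $D_T$; this changes nothing of substance.
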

\begin{proof}
If $S$ is an edge then $S\cup T = T$, and similarly $S\cup T = S$ when $T$ is an edge, so the final statement is immediate.
We suppose that neither $S$ nor $T$ is an edge and compute the boundary of the union.

Since $S$ is not an edge and $a\in \eth(S)$, there is a vertex $v\in V_S$ with $t(a^\dagger) = v$.
Since $v$ is also in $V_{S\cup T}$, it follows that $a^\dagger \notin \eth(S\cup T)$.
A symmetric argument shows that $a\notin \eth(S\cup T)$.

Suppose $a' \in \eth(S) \backslash \{a\}$, so that $t((a')^\dagger) \in V_S$. 
We either have $a' \in \eth(G)$, or $t(a') = v$ is a vertex of $G$ which is not in $S$. 
If $a' \in \eth(G)$, then $a' \in \eth(S\cup T)$ as well.
If $t(a') = v \in V_G \backslash V_S$, then either $v \in V_T$ or $v\notin V_T$.
If $v\in V_T$, then $e' = [a', (a')^\dagger]$ is an edge in $S\cap T$ distinct from $e = [a,a^\dagger]$, contrary to our assumption. 
If $v\notin V_T$, then $v\notin V_{S\cup T}$, hence $a' \in \eth(S\cup T)$. 
Since $a'$ was arbitrary, we have shown $\eth(S) \backslash \{a\} \subset \eth(S\cup T)$; reversing roles gives $\eth(T) \backslash\{a^\dagger\} \subset \eth(S\cup T)$.

It remains to observe that $\eth(S\cup T) \subset \eth(S) \cup \eth(T)$.
Suppose $a'\in \eth(S\cup T)$; since $S\cup T$ is not an edge, we have $t((a')^\dagger) \in V_{S\cup T} = V_S \cup V_T$. 
Without loss of generality suppose $t((a')^\dagger) \in V_S$.
Since $a'$ is in  $\eth(S\cup T)$, either $a'\in \eth(G)$ or $a'\in D_G$ and $t(a') = w \in V_G \backslash V_{S\cup T} \subseteq V_G \backslash V_S$.
In both cases, we conclude that $a'$ is in $\eth(S)$, as desired.
\end{proof}

\begin{lemma}\label{lemma avoid subtree}
Suppose $G$ is a tree and $T$ is a proper subtree.
Then there exists a star subtree $\medstar \subset G$ and a proper subtree $R \subset G$ so that
$R$ overlaps with $\medstar$,
the union $R \cup \medstar$ is equal to $G$, and
$T \subset R$.
\end{lemma}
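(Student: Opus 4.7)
Plan: The idea is to pick a leaf vertex $v$ of $G$ that lies outside $V_T$, take $\medstar \coloneqq \medstar_v$, and let $R$ be $G$ ``with $v$ removed'' (suitably defined so as to be a subtree). The unique internal edge $e_{vw}$ joining $v$ to its neighbor $w$ then lies in both $R$ and $\medstar$, furnishing the overlap, and together they will reconstitute $G$. I would first dispose of the degenerate case $|V_G| \leq 1$: either $G$ is an edge (no proper subtree, so vacuous) or $G = \medstar_v$ has one vertex, in which case any proper $T$ has $V_T = \varnothing$ (since $V_T = \{v\}$ would force $E_T = E_G$ by arc-accounting) and so is a single edge, and one takes $\medstar \coloneqq G$ and $R \coloneqq T$.

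In the main case $|V_G| \geq 2$, the key combinatorial sub-claim is that $V_T$ does not contain every leaf of $G$. Suppose it did. Since $T$ is connected and $G$ is a tree, for any two vertices of $T$ the unique $G$-path between them lies in $T$; in particular, every internal vertex of $G$ is in $V_T$ (each such vertex is a cut vertex separating two leaves of $G$, hence lies on a leaf-to-leaf path). Thus $V_T = V_G$, and arc-accounting (every edge of $G$ has at least one arc in $D_G = \bigcup_u \nbhd(u)$, since $G$ is connected with a vertex) forces $E_T = E_G$, contradicting properness of $T$. So I may pick a leaf $v \notin V_T$. In the subcase $V_T = \varnothing$ with $T = \{e\}$ a boundary edge attached to a unique vertex $u$ that happens itself to be a leaf, I use the standard fact that a tree with at least two vertices has at least two leaves to pick $v \neq u$.

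Define $R$ by $V_R \coloneqq V_G \setminus \{v\}$ and $E_R \coloneqq \{e \in E_G \mid e \text{ has an arc attached to a vertex of } V_R\}$; equivalently, $E_R$ is $E_G$ minus the boundary edges of $G$ incident only to $v$. Since $v$ is a leaf with unique internal neighbor $w$, the subgraph $R$ is connected: the underlying ``tree minus leaf'' on $V_R$ is connected, and the edge $e_{vw}$ appears in $R$ as a dangling arc at $w$. Hence $R$ is a proper subtree of $G$; the choice of $v$ (and, in the edge subcase, $v \neq u$) ensures $V_T \subseteq V_R$ and $E_T \subseteq E_R$, whence $T \subseteq R$. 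With $\medstar \coloneqq \medstar_v$, one verifies directly that $V_R \cup V_\medstar = V_G$ and $E_R \cup E_\medstar = E_G$ (since $\medstar_v$ contains $v$ together with all edges incident to $v$, including the ones omitted from $E_R$), so $R \cup \medstar = G$; and they overlap at the edge $e_{vw}$. The main obstacle is the leaf-finding sub-claim together with the side bookkeeping in the edge-subcase $V_T = \varnothing$; the remaining verifications are direct unwindings of the definitions.
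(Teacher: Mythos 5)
Your proof is correct and follows essentially the same strategy as the paper's: locate an extremal vertex $v$ of $G$ outside $V_T$, take $\medstar = \medstar_v$ and $R$ to be the complementary subtree, after disposing of the cases where $G$ is an edge or a star and where $T$ is an edge. The only difference is cosmetic: you establish the existence of such a leaf by a counting/contradiction argument (not all leaves can lie in $V_T$), whereas the paper constructs one explicitly by taking an extremal vertex of the branch hanging off $T$ through a nearest vertex $v \notin V_T$; both work, and your handling of the boundary-edge subcase matches the paper's requirement that $\medstar$ not contain $T$ in its boundary.
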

\begin{proof}
Note that an edge does not have proper subtrees, so $G$ has at least one vertex.
If $G$ is a star, then $T$ must be an edge, and one takes $\medstar = G$ and $R = T$.
If $G$ has more than one vertex and $T$ is an edge, then one can take $\medstar$ to be any extremal vertex of $G$ which does not contain $T$ as a boundary, and $R$ to be the complement of $\medstar$.

We now suppose $T$ contains a vertex.
Let $v \in V_G\backslash V_T$ be a vertex of minimum positive distance from $T$, that is, there is an edge $e$ between $v$ and some vertex $w$ of $T$.
Consider the collection of all paths of $G$ which contain $e$ but do not contain $w$, and let $S\subset G$ be the subtree consisting of all vertices appearing on such paths, and all arcs appearing on edges on such paths. 
Notice that $v\in V_S$.
If $S$ is a star then $v$ is extremal in $G$, and we set $\medstar = S$ and let $R$ be its complement.
Otherwise, let $u \in V_S$ be an extremal vertex of $S$ other than $v$ (which may or may not be extremal in $S$).
The vertex $u$ is also extremal in $G$, and we let $\medstar$ be its star and $R$ the complement.
In either case, the three conditions hold.
\end{proof}

\begin{proposition}\label{prop tree maps equiv}
Suppose $G$ and $G'$ are undirected trees.
The assignment $(\varphi_0,\hphi) \mapsto (\varphi_0,\hphi|_{V_G})$ is a bijection between tree maps and full tree maps from \cref{def tree map}. 
\end{proposition}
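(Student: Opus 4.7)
The plan is to construct an explicit two-sided inverse to the restriction map, which amounts to showing that every tree map $(\varphi_0, \varphi_1)$ extends uniquely to a full tree map $(\varphi_0, \hphi)$.

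For uniqueness, suppose $(\varphi_0, \hphi)$ is a full tree map extending a given tree map. Any subtree $T \in \emb(G)$ with a vertex admits an iterated decomposition $T = \bigcup_{v \in V_T} \medstar_v$: enumerate $V_T$ so each partial union stays connected, which is possible because $T$ is a tree and the stars of adjacent vertices overlap along their shared edge. Union preservation then forces $\hphi(T) = \bigcup_{v \in V_T} \varphi_1(v)$. For an edge $e = [a, a^\dagger] \in \emb(G)$ sitting inside some star $\medstar_v$, applying union preservation to $e \cup \medstar_v = \medstar_v$ yields $\hphi(e) \subseteq \varphi_1(v)$, and boundary-compatibility forces $\hphi(e)$ to be the unique edge with arcs $\{\varphi_0(a), \varphi_0(a)^\dagger\}$. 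The edge-only case $G = \updownarrow$ is immediate from boundary-compatibility. Since subtrees of trees are determined by their vertex sets (or, for edges, by their boundary arcs), $\hphi$ is uniquely pinned down.

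For existence, I take these formulas as the \emph{definition} of $\hphi$ and verify the conditions of \cref{def tree map}(2). The first task is well-definedness: for $V_T$ inhabited, I must show that the subgraph of $G'$ with vertex set $\bigcup_{v\in V_T} V_{\varphi_1(v)}$ and edge set $\bigcup_{v\in V_T} E_{\varphi_1(v)}$ is a subtree. If $u$ and $v$ are adjacent in $T$ via an edge $[b, b^\dagger]$ (with $t(b) = v$, $t(b^\dagger) = u$), boundary-compatibility of the underlying tree map gives $\varphi_0(b) \in \eth(\varphi_1(v))$ and $\varphi_0(b)^\dagger \in \eth(\varphi_1(u))$, so $\varphi_1(u)$ and $\varphi_1(v)$ share the edge $[\varphi_0(b), \varphi_0(b)^\dagger]$ of $G'$. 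Since $T$ is connected, iterating along a path in $T$ produces a connected subgraph of $G'$, which is automatically a subtree.

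It remains to check that $\hphi$ is boundary-compatible and preserves unions. For boundary-compatibility, I proceed by induction on $|V_T|$: pick an extremal vertex $v$ of $T$, write $T = S \cup \medstar_v$ with $V_S = V_T \setminus \{v\}$, apply \cref{lem subtree union boundary} on both the $G$-side and the $G'$-side, and use the shared-edge computation above to match the cancelled arcs across $\varphi_0$, so the inductive hypothesis closes the calculation. For union preservation, given overlapping $S, S' \in \emb(G)$, the vertex set of $\hphi(S \cup S')$ is $\bigcup_{v \in V_S \cup V_{S'}} V_{\varphi_1(v)}$, which also describes the vertex set of the (existing, by the same shared-edge argument) union $\hphi(S) \cup \hphi(S')$; since non-edge subtrees are determined by their vertex sets, and the edge-overlap cases reduce to direct inspection, the two agree.

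The main obstacle will be the boundary-compatibility induction: the comparison in \cref{def boundary compatible} is between a \emph{set} $\eth(\hphi(T)) \subseteq A_{G'}$ and the \emph{multiset} $\mathbb{N}\varphi_0(\eth(T))$, so the bookkeeping must confirm that no pair of distinct boundary arcs at any intermediate stage is identified under $\varphi_0$. That this never happens is forced by boundary-compatibility of the given tree map at stars (the base case) combined with the matched-pair cancellation of \cref{lem subtree union boundary}, and ultimately reflects the fact that $G'$ is a tree.
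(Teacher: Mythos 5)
Your overall strategy---define $\hphi(T)$ as the union $\bigcup_{v\in V_T}\varphi_1(v)$ and then verify boundary-compatibility and union preservation by induction on the number of vertices using \cref{lem subtree union boundary}---is essentially the paper's strategy (the paper runs the induction over a filtration of $\emb(G)$ by vertex count and must additionally check that its provisional definition $\hphi_{n+1}(S\cup\medstar)\coloneqq\hphi_n(S)\cup\hphi_1(\medstar)$ is independent of the chosen decomposition, a step your canonical vertex-set formula sidesteps; the paper also organizes the union-preservation check via \cref{lemma avoid subtree}). However, there is a genuine gap at exactly the point you flag as ``the main obstacle.'' To invoke \cref{lem subtree union boundary} at each inductive step you must know that the partial union $\bigcup_{i\le k}\varphi_1(v_i)$ and the next star image $\varphi_1(v_{k+1})$ intersect in \emph{precisely} the single edge $[\varphi_0(b),\varphi_0(b)^\dagger]$---equivalently, that $\varphi_1(u)$ and $\varphi_1(v)$ are vertex-disjoint for distinct $u,v\in V_G$. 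You assert this is ``forced by boundary-compatibility of the given tree map at stars combined with the matched-pair cancellation,'' but no such bookkeeping argument can deliver it: if the images did share a vertex, the only consequence would be that the extension you defined fails to be boundary-compatible, which is not a contradiction when boundary-compatibility of the extension is the very thing being proved. Nothing in the definition of a tree map directly constrains how $\varphi_1(u)$ and $\varphi_1(v)$ sit relative to one another inside $G'$ beyond their boundaries.

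The missing ingredient is the paper's \cref{lem no overlap}: for $u\ne v$ the subtrees $\varphi_1(u)$ and $\varphi_1(v)$ share no vertices. Its proof is a genuine geometric argument rather than cancellation: assuming a common vertex $w$, one takes the unique path in $G$ from $u$ to $v$, uses boundary-compatibility at each intermediate star to locate, inside each $\varphi_1(u_j)$, the unique path joining the $\varphi_0$-images of the consecutive connecting arcs, and concatenates these with paths in $\varphi_1(u)$ and $\varphi_1(v)$ emanating from $w$ to produce a cycle in $G'$, contradicting that $G'$ is a tree. You correctly sense that acyclicity of $G'$ is the ultimate source of the fact, but your argument needs this lemma stated and proved before the induction can close; with it in hand, the rest of your proposal (including the uniqueness direction and the vertex-set verification of union preservation, which uses that subtrees containing a vertex are determined by their vertex sets) goes through.
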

\begin{proof}
Let $\emb^n(G) \subseteq \emb(G)$ denote set of subtrees having $n$ or fewer vertices.
This gives a (finite) filtration of the set $\emb(G)$, starting with \[ \emb^0(G) = E_G \subseteq E_G \amalg V_G = \emb^1(G).\]
Given a tree map $(\varphi_0, \varphi_1)$, we inductively define $\hphi_n \colon \emb^n(G) \to \emb(G')$ and show that it satisfies boundary-compatibility and preserves unions that exist in $\emb^n(G)$.
The first map $\hphi_0 \colon E_G \to E_{G'} \subseteq \emb(G')$ is induced from $\varphi_0 \colon A_G \to A_{G'}$, while $\hphi_1$ is defined to be $\hphi_0 \amalg \varphi_1$.
These both satisfy boundary-compatibility.
Any unions $S\cup T \in \emb^1(G)$ for overlapping $S,T$ which exist must either have $S=T$ or one of the two terms is an edge.
If $S$ is an edge or if $S = T$, then $S\cup T = T$, and also $\hphi_1(S) \cup \hphi_1(T) = \hphi_1(T)$ since either $\hphi_1(S)$ is an edge or $\hphi_1(S) = \hphi_1(T)$.
Thus $\hphi_1$ preserves unions.

We continue inductively.
Suppose $\hphi_n \colon \emb^n(G) \to \emb(G)$ is defined, preserves unions that exist in the $n$th filtration level, and $(\varphi_0 ,\hphi_n)$ is boundary-compatible.
Let $n\geq 1$.
Any subtree $R$ having $n+1$ vertices can be written as $S \cup \medstar$ where $\medstar$ is a star and $S$ has $n$ vertices. 
It is automatic that $S$ and $\medstar$ intersect only at an edge $e = [a,a^\dagger]$.
We make a provisional definition $\hphi_{n+1}(R) \coloneqq \hphi_n(S) \cup \hphi_1(\medstar)$, which of course depends on a choice of $S \subset R$ having $n$ vertices.
If $n+1 = 2$, then $S$ is itself a star and by symmetry there is no actual choice involved in this definition.
Given a different subtree $T \subset R$ having $n \geq 2$ vertices, the tree $S \cap T$ will have $n-1$ vertices.
We have $R = T \cup \medstar'$ for a star $\medstar' \subset S \subset R$, with $T$ and $\medstar'$ intersecting at an edge $e'$.
Note that $e, e'$ are both in $S\cap T$.
We then have 
\begin{align*}
\hphi_n(T) \cup \hphi_1(\medstar') &= \hphi_n((S\cap T) \cup \medstar) \cup \hphi_1(\medstar') \\ 
&= \hphi_{n-1}(S\cap T) \cup \hphi_1(\medstar) \cup \hphi_1(\medstar') \\ 
&= \hphi_{n}((S\cap T) \cup \medstar') \cup \hphi_1(\medstar) = \hphi_n (S) \cup \hphi_1(\medstar)
\end{align*}
which shows that $\hphi_{n+1}$ is well-defined.

Further, observe that $\hphi_n(S)$ and $\hphi_1(\medstar)$ (as above) intersect only at the single edge $\bar e = [\varphi_0(a), \varphi_0(a)^\dagger]$.
Indeed, the proof of \cref{intersection lemma} works equally well for $\hphi_n$ as it does for $\hphi$, so $\hphi_n(S)$ and $\hphi_n(\medstar)$ cannot share any vertices.

We can use this to show that $\hphi_{n+1}$ is boundary-compatible.
As above, suppose that $R = S \cup \medstar$ has $n+1$ vertices, $\medstar$ is a star, $S$ has $n$ vertices, and $S$ and $\medstar$ intersect only at the edge $e = [a,a^\dagger]$.
Here we take $a\in \eth(S)$ and $a^\dagger \in \eth(\medstar)$; write $\bar a = \varphi_0(a)$.
Boundary-compatibility implies that 
\[ \begin{tikzcd}
\eth(S) \rar{\varphi_0} & \eth(\hphi_n(S)) & \eth(\medstar) \rar{\varphi_0} & \eth(\hphi_1(\medstar))
\end{tikzcd} \]
are bijections.
If neither $\hphi_n(S)$ nor $\hphi_1(\medstar)$ is an edge, then
we have the desired bijection below by \cref{lem subtree union boundary}.
\[ \begin{tikzcd}[column sep=small]
\eth(S \cup \medstar) \ar[rr,leftrightarrow,"="] & & (\eth(S) \backslash \{a\}) \amalg (\eth(\medstar) \backslash \{a^\dagger\}) \dar["\cong"] \\
\eth(\hphi_{n+1}(S\cup \medstar)) \rar[leftrightarrow,"="] & 
\eth(\hphi_n(S) \cup \hphi_1(\medstar)) 
\rar[leftrightarrow,"="] & 
(\eth(\hphi_n(S)) \backslash \{\bar a\}) \amalg (\eth(\hphi_1(\medstar)) \backslash \{\bar a^\dagger\})
\end{tikzcd} \]
We must still address the case when $\hphi_n(S)$ is an edge or when $\hphi_1(\medstar)$ is an edge.
These cases are entirely analogous, so we suppose that $\hphi_1(\medstar)$ is an edge.
By boundary-compatibility, $\eth(\medstar) = \{a^\dagger, a'\}$ with $\varphi_0((a')^\dagger) = \varphi_0(a^\dagger) = \bar a^\dagger$, so $\varphi_0(a') = \bar a$.
Thus
\[ 
\eth(S \cup \medstar) = (\eth(S) \backslash \{a\}) \amalg (\eth(\medstar) \backslash \{a^\dagger\})   =  (\eth(S) \backslash \{a\})  \amalg \{a'\} \to \eth( \hphi_n(S) )
\]
is a bijection, as required.
We have thus shown that $(\varphi_0, \hphi_{n+1})$ is boundary-compatible.

Finally, we must show that $\hphi_{n+1}$ preserves unions that exist in $\emb^{n+1}(G)$.
Suppose that $S \cup T \in \emb^{n+1}(G)$ is a union of overlapping subtrees $S$ and $T$. 
We may assume that $S\cup T$ has $n+1$ vertices, for if it has $n$ or fewer vertices then $\hphi_{n+1}(S\cup T) = \hphi_n(S\cup T) = \hphi_n(S) \cup \hphi_n(T)$ by the induction hypothesis.
We further assume that $S \neq T$, since if $S = T$ then $\hphi_{n+1}(S) \cup \hphi_{n+1}(T) = \hphi_{n+1}(S) = \hphi_{n+1}(S \cup T)$.
Without loss of generality, suppose there is a vertex of $S$ which is not in $T$.
By \cref{lemma avoid subtree}, there is a decomposition $S\cup T = R \cup \medstar$ where $T$ is a subtree of $R$, and $R$ is a proper subtree of $S\cup T$.
Notice also that $\medstar \subset S$.
Of course $S\cap R$ is a subtree since it is non-empty (it contains $S\cap T$).
We then have $S = S \cap (S\cup T) = S \cap (\medstar \cup R) = \medstar \cup (S\cap R)$.
Now $S\cap R$ overlaps with $T$, so 
\[
	S \cup T = ( \medstar \cup (S\cap R) ) \cup T = \medstar \cup ((S\cap R) \cup T).
\]
The union $(S\cap R) \cup T$ contains exactly $n$ vertices, and we find
\begin{align*}
\hphi_{n+1}(S\cup T) &= \hphi_1(\medstar) \cup \hphi_n((S\cap R) \cup T) \\ 
&= \hphi_1(\medstar) \cup (\hphi_n(S\cap R) \cup \hphi_n(T) ) \\
&= ( \hphi_1(\medstar) \cup \hphi_n(S\cap R) ) \cup \hphi_n(T) \\
&= \hphi_{n+1}(\medstar \cup (S\cap R)) \cup \hphi_n(T) = \hphi_{n+1}(S) \cup \hphi_{n+1}(T).
\end{align*}
This uses that $\hphi_n$ preserves unions and the definition of $\hphi_{n+1}$.

If $G$ has $k$ vertices, then $\emb^k(G) = \emb(G)$ and we have thus defined a full tree map from $G$ to $G'$.
This construction is a section to the assignment from the proposition statement.
But notice that it is the \emph{unique} such section: our $\hphi$ must preserve unions, and we defined the section so that it would preserve particular unions.
Thus the map in question is a bijection.
\end{proof}

\subsection{The (augmented) cyclic operad associated to a tree}\label{subsec tree maps and cyclic maps}
Our goal in this section is to prove \cref{prop gcatnought ff}.
The reader should recall the category of augmented cyclic operads from Definition 2.3 of \cite{DrummondColeHackney:DKHTCO}, which we have denoted by $\augcyc$.
If $G$ is an undirected tree, then the (augmented) cyclic operad $\freecyc(G) \in \augcyc_{A_G}$ is obtained by choosing for each $v\in V_G$ an ordering $a_0, a_1, \dots, a_n$ of $\eth(\medstar_v) = \nbhd(v)^\dagger$, and considering $v$ as a free generator in $\freecyc(G)(a_0, a_1, \dots, a_n)$.
This choice of order does not matter for the isomorphism class of $\freecyc(G)$.

\begin{lemma}\label{lem operations are subtrees}
The operations in $\freecyc(G)$ are in bijection with subtrees $T$ of $G$ equipped with a total ordering on the boundary $\eth(T)$.
\end{lemma}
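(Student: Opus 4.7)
\noindent The plan is to construct explicit mutually inverse assignments by induction on the number of vertices of the subtree. In the forward direction, given a pair $(T,\sigma)$ with $\sigma=(b_0,\ldots,b_k)$ a total ordering of $\eth(T)$, I define an operation $\mu(T,\sigma)\in\freecyc(G)(b_0,\ldots,b_k)$ as follows. If $V_T=\varnothing$, then $T$ is a single edge and $\mu(T,\sigma)$ is the corresponding color identity. If $V_T=\{v\}$, then $T=\medstar_v$ and $\mu(T,\sigma)$ is the image of the generator $v$ under the $\Sigma_{k+1}$-action carrying the chosen reference ordering of $\nbhd(v)^\dagger$ to $\sigma$. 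If $|V_T|>1$, apply \cref{lemma avoid subtree} to write $T=R\cup\medstar$, where $R$ and $\medstar$ meet only at an edge $\{c,c^\dagger\}$ with $c\in\eth(R)$ and $c^\dagger\in\eth(\medstar)$; inductively produce operations for $R$ and $\medstar$ (with $c$ and $c^\dagger$ placed last in their respective orderings), compose via the cyclic operad composition at this matching pair, and apply a permutation to recover $\sigma$.

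\noindent For the inverse direction, the cleanest route is to endow the set of ordered-boundary subtrees of $G$ with the structure of an $A_G$-colored augmented cyclic operad: composition is given by union of subtrees along complementary boundary arcs (again a subtree of $G$, with boundary computed by \cref{lem subtree union boundary}), the $\Sigma_{k+1}$-action cycles and permutes the ordering, and identities correspond to single-edge subtrees. The universal property of $\freecyc(G)$ then produces a morphism of augmented cyclic operads sending each generator $v$ to the star $\medstar_v$ with its reference ordering, and one verifies inductively (again using \cref{lemma avoid subtree}) that this morphism is surjective with inverse $\mu$.

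\noindent The main obstacle is verifying the augmented cyclic operad axioms for the ordered-subtree structure, especially associativity and equivariance. Two parenthesizations of an iterated composition yield the same subtree by set-theoretic associativity of union, but one must check that the boundary orderings match exactly as prescribed by the composition axioms of $\augcyc$, which requires careful bookkeeping with the $\Sigma_{k+1}$-actions. A parallel argument establishes that $\mu$ is independent of the choice of decomposition $T=R\cup\medstar$ in the inductive step. Once these compatibility properties are in place, the bijection statement is immediate from the universal property.
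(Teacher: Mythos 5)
Your forward direction is the same as the paper's: build the operation attached to an ordered-boundary subtree by iterated composition of the generators, with well-definedness coming from the $\augcyc$ axioms. Where you genuinely diverge is the inverse direction. The paper stays entirely inside $\freecyc(G)$: it observes that two subtree-operations which are composable at a pair of complementary boundary arcs must be vertex-disjoint (by the paths argument from \cref{lem no overlap}), so their composite is the union subtree with boundary given by \cref{lem subtree union boundary}; hence the subtree-operations are closed under composition and exhaust all iterated composites of generators, while distinctness of these operations is cited from an external lemma of \cite{Hackney:CGOS}. You instead package the ordered-boundary subtrees into an auxiliary $A_G$-colored augmented cyclic operad $Q$ and invoke the universal property to get a retraction $\freecyc(G)\to Q$. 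This buys you injectivity for free (no external citation needed: the retraction separates the subtree-operations), at the cost of verifying the $\augcyc$ axioms for $Q$, which is real but routine bookkeeping. One point you should make explicit rather than leave implicit: before you may ``compute the boundary by \cref{lem subtree union boundary},'' you must check its hypothesis that the two subtrees meet in exactly the single edge $[c,c^\dagger]$ --- equivalently, that subtrees composable along complementary boundary arcs are automatically vertex-disjoint in a tree. This is exactly the paths argument of \cref{lem no overlap}, and it is also what guarantees your composition in $Q$ is well-posed; without it the construction of $Q$ does not get off the ground. With that supplied, and with the independence-of-decomposition check for $\mu$ (which the paper likewise asserts from the operad axioms rather than writing out), your argument goes through.
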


\begin{proof}
Let $P= \freecyc(G)$. 
We have identity elements $\id_a \in P(a^\dagger, a)$ and $\id_{a^\dagger} \in P(a, a^\dagger)$, and these correspond to the two possible orderings for the boundary of the edge subtree $[a,a^\dagger]$.

Working inductively by composing with one generator at a time, each non-edge subtree $T$ determines an element of $P(\eth(T))$ for some ordering of $\eth(T)$ by \cref{lem subtree union boundary}.
The axioms of a cyclic operad guarantee that this element does not depend on the choice of order of attaching vertices to construct the tree $T$.
This is an element $T\in P(\eth(T)) = P(a_0, \dots, a_n)$, where the ordering of $\eth(T)$ is determined by the chosen orderings on $\eth(\medstar_v)$.
The cyclic operad axioms imply that applying a permutation at any intermediate step simply has the effect of applying some permutation at the last step
\[
	T \in P(a_0, a_1, \dots, a_n) \xrightarrow{\sigma^*} P(a_{\sigma(0)}, a_{\sigma(1)}, \dots, a_{\sigma(n)}).
\]
Thus all subtrees of equipped with an ordering of the boundary may regarded as (necessarily distinct, by \cite[Lemma A.3]{Hackney:CGOS}) operations of $P$; we refer to these as simply subtrees in the next paragraph.

Now suppose that $S,T$ are two non-edge subtrees of $G$,
considered as elements 
\[
	S \in P(a'_0, \dots, a'_m) = P(\eth(S)) \qquad T \in P(a_0, \dots, a_n) = P(\eth(T))
\]
with $a'_i = a_j^\dagger$, so that we can form the composition $S {\vphantom{\circ}_{i}\circ_{j}} T$.
The paths argument in the first two paragraphs of the proof of \cref{lem no overlap} shows that $S$ and $T$ do not share any vertices, hence the composite of $S$ and $T$ uses each generator at of $P$ at most once.
It follows that the set of generators used is $V_S \amalg V_T = V_{S\cup T}$, and the composite is just the union $S\cup T$, along with the relevant ordering of $\eth(S\cup T)$.
Since composites of subtrees are again subtrees, all iterated composites of the generators of $P$ are subtrees. 
\end{proof}

\begin{proof}[Proof of \cref{prop gcatnought ff}]
If $H$ and $G$ are undirected trees, then a map of cyclic operads $F\colon \freecyc(H) \to \freecyc(G)$ consists of an involutive function $f_0\colon A_H \to A_G$, and a function $f$ from $V_H$ to the operations of $\freecyc(G)$.
As we are regarding $v\in V_H$ as living in $\freecyc(H)(\eth(\medstar_v)) = \freecyc(H)(a_0, \dots, a_n)$, the pair of functions needs only satisfy color-compatibility, that is, $f(v)$ should live in $\freecyc(G)(f_0(a_0), \dots, f_0(a_n))$.
By \cref{lem operations are subtrees}, $f(v)$ is a subtree $T$ of $G$ together with an ordering of its boundary $\eth(T) = \{f_0(a_0), \dots, f_0(a_n)\}$.
But this ordering is not actual data of the morphism, rather is just serves to establish boundary-compatibility.
Thus, this assignment is exactly the same thing as a tree map of \cref{def tree map}.
Since $\freecyc$ is a functor, this establishes the desired equivalence.
\Cref{lem operations are subtrees} also implies that $\freecyc(G)$ is a cyclic operad (rather than an augmented cyclic operad) if and only if $G$ has inhabited boundary, giving the statement about $\gcatcyc$.
\end{proof} 

\noindent\textbf{Acknowledgments.}
I am grateful to the organizers of the special session for inviting me to present this work, and to the other speakers and participants for helping to make an invigorating meeting.

\providecommand{\bysame}{\leavevmode\hbox to3em{\hrulefill}\thinspace}
\providecommand{\MR}{\relax\ifhmode\unskip\space\fi MR }
\providecommand{\MRhref}[2]{%
  \href{http://www.ams.org/mathscinet-getitem?mr=#1}{#2}
}
\providecommand{\href}[2]{#2}


\begin{thebibliography}{BGMS21}

\bibitem[BB17]{BataninBerger:HTAPM}
M.~A. Batanin and C.~Berger, \emph{Homotopy theory for algebras over polynomial
  monads}, Theory Appl. Categ. \textbf{32} (2017), Paper No. 6, 148--253.
  \MR{3607212}

\bibitem[Ber]{Berger:MCO}
Clemens Berger, \emph{Moment categories and operads}, Preprint,
  \href{https://arxiv.org/abs/2102.00634v2}{arXiv:2102.00634v2} [math.CT].

\bibitem[BG19]{BourkeGarner:MT}
John Bourke and Richard Garner, \emph{Monads and theories}, Adv. Math.
  \textbf{351} (2019), 1024--1071. \MR{3956765}

\bibitem[BGMS21]{BaezGenoveseMasterShulman}
John~C. Baez, Fabrizio Genovese, Jade Master, and Michael Shulman,
  \emph{Categories of nets}, 2021 36th {A}nnual {ACM}/{IEEE} {S}ymposium on
  {L}ogic in {C}omputer {S}cience ({LICS}), IEEE, NJ, 2021, pp.~1--13.
  \MR{4401577}

\bibitem[BK]{BergerKaufmann:TGA}
Clemens Berger and Ralph~M. Kaufmann, \emph{Trees, graphs and aggregates: a
  categorical perspective on combinatorial surface topology, geometry, and
  algebra}, Preprint,
  \href{https://arxiv.org/abs/2201.10537v1}{arXiv:2201.10537v1} [math.AT].

\bibitem[BM96]{BehrendManin:SSMGWI}
K.~Behrend and Yu. Manin, \emph{Stacks of stable maps and {G}romov-{W}itten
  invariants}, Duke Math. J. \textbf{85} (1996), no.~1, 1--60. \MR{1412436}

\bibitem[BM11]{bm_reedy}
Clemens Berger and Ieke Moerdijk, \emph{On an extension of the notion of
  {R}eedy category}, Math. Z. \textbf{269} (2011), no.~3-4, 977--1004.
  \MR{2860274}

\bibitem[BM15]{BataninMarkl:OCDDC}
Michael Batanin and Martin Markl, \emph{Operadic categories and duoidal
  {D}eligne's conjecture}, Adv. Math. \textbf{285} (2015), 1630--1687.
  \MR{3406537}

\bibitem[BMW12]{BergerMelliesWeber:MAAT}
Clemens Berger, Paul-Andr\'{e} Melli\`es, and Mark Weber, \emph{Monads with
  arities and their associated theories}, J. Pure Appl. Algebra \textbf{216}
  (2012), no.~8-9, 2029--2048. \MR{2925893}

\bibitem[BR]{BorghiRobertson:MIOGT}
Olivia Borghi and Marcy Robertson, \emph{Lecture notes on modular infinity
  operads and {G}rothendieck--{T}eichm\"uller theory}, to appear.

\bibitem[CGR14]{ChengGurskiRiehl:CMMAM}
Eugenia Cheng, Nick Gurski, and Emily Riehl, \emph{Cyclic multicategories,
  multivariable adjunctions and mates}, J. K-Theory \textbf{13} (2014), no.~2,
  337--396. \MR{3189430}

\bibitem[CH20]{ChuHaugseng:EIO}
Hongyi Chu and Rune Haugseng, \emph{Enriched {$\infty$}-operads}, Adv. Math.
  \textbf{361} (2020), 106913. \MR{4038556}

\bibitem[CH21]{ChuHaugseng:HCASC}
\bysame, \emph{Homotopy-coherent algebra via {S}egal conditions}, Adv. Math.
  \textbf{385} (2021), 107733. \MR{4256131}

\bibitem[CH22]{ChuHackney}
Hongyi Chu and Philip Hackney, \emph{On rectification and enrichment of
  infinity properads}, J. Lond. Math. Soc. (2) \textbf{105} (2022), no.~1,
  1418--1517. \MR{4389306}

\bibitem[CM11]{CisinskiMoerdijk:DSMHO}
Denis-Charles Cisinski and Ieke Moerdijk, \emph{Dendroidal sets as models for
  homotopy operads}, J. Topol. \textbf{4} (2011), no.~2, 257--299. \MR{2805991}

\bibitem[CM13a]{CisinskiMoerdijk:DSSIO}
\bysame, \emph{Dendroidal {S}egal spaces and {$\infty$}-operads}, J. Topol.
  \textbf{6} (2013), no.~3, 675--704. \MR{3100887}

\bibitem[CM13b]{CisinskiMoerdijk:DSSO}
\bysame, \emph{Dendroidal sets and simplicial operads}, J. Topol. \textbf{6}
  (2013), no.~3, 705--756.

\bibitem[DCH19]{DrummondColeHackney:CERIMS}
Gabriel~C. Drummond-Cole and Philip Hackney, \emph{A criterion for existence of
  right-induced model structures}, Bull. Lond. Math. Soc. \textbf{51} (2019),
  no.~2, 309--326. \MR{3937590}

\bibitem[DCH21]{DrummondColeHackney:DKHTCO}
\bysame, \emph{Dwyer--{K}an homotopy theory for cyclic operads}, Proc. Edinb.
  Math. Soc. (2) \textbf{64} (2021), no.~1, 29--58. \MR{4249838}

\bibitem[Dun06]{Duncan:TQC}
Ross Duncan, \emph{Types for quantum computing}, Ph.D. thesis, 2006, Oxford
  University.

\bibitem[FBSD21]{FBSD:ComplexSystem}
John~D. Foley, Spencer Breiner, Eswaran Subrahmanian, and John~M. Dusel,
  \emph{Operads for complex system design specification, analysis and
  synthesis}, Proc. R. Soc. A. \textbf{477} (2021), no.~2250, Paper No.
  20210099, 35. \MR{4291809}

\bibitem[Gag15]{Gagna:Planar}
Andrea Gagna, \emph{The {C}isinski--{M}oerdijk model structure on planar
  dendroidal sets}, 2015, Master thesis, Universiteit Leiden.

\bibitem[Gan03]{Gan:KDD}
Wee~Liang Gan, \emph{Koszul duality for dioperads}, Math. Res. Lett.
  \textbf{10} (2003), no.~1, 109--124. \MR{1960128}

\bibitem[Gar08]{Garner:PPDL}
Richard Garner, \emph{Polycategories via pseudo-distributive laws}, Adv. Math.
  \textbf{218} (2008), no.~3, 781--827. \MR{2414322}

\bibitem[Get09]{Getzler:OR}
Ezra Getzler, \emph{Operads revisited}, Algebra, arithmetic, and geometry: in
  honor of {Y}u. {I}. {M}anin. {V}ol. {I}, Progr. Math., vol. 269,
  Birkh\"{a}user Boston, Boston, MA, 2009, pp.~675--698. \MR{2641184}

\bibitem[GH18]{GarnerHirschowitz:SMAF}
Richard Garner and Tom Hirschowitz, \emph{Shapely monads and analytic
  functors}, J. Logic Comput. \textbf{28} (2018), no.~1, 33--83. \MR{3798968}

\bibitem[GK95]{GetzlerKapranov:COCH}
E.~Getzler and M.~M. Kapranov, \emph{Cyclic operads and cyclic homology},
  Geometry, topology, \& physics, Conf. Proc. Lecture Notes Geom. Topology, IV,
  Int. Press, Cambridge, MA, 1995, pp.~167--201. \MR{1358617}

\bibitem[GK98]{GetzlerKapranov:MO}
\bysame, \emph{Modular operads}, Compositio Math. \textbf{110} (1998), no.~1,
  65--126. \MR{1601666}

\bibitem[Gro61]{Grothendieck:TCTGA3}
Alexander Grothendieck, \emph{Techniques de construction et th\'eor\`emes
  d'existence en g\'eom\'etrie alg\'ebrique {III} : pr\'esch\'emas quotients},
  S\'eminaire Bourbaki : ann\'ees 1960/61, expos\'es 205-222, S\'eminaire
  Bourbaki, no.~6, Soci\'et\'e math\'ematique de France, 1961, talk:212 (fr).
  \MR{1611786}

\bibitem[Hac]{Hackney:CGOS}
Philip Hackney, \emph{Categories of graphs for operadic structures}, Preprint,
  \href{https://arxiv.org/abs/2109.06231v1}{arXiv:2109.06231v1} [math.CT].

\bibitem[HHM16]{HeutsHinichMoerdijk}
Gijs Heuts, Vladimir Hinich, and Ieke Moerdijk, \emph{On the equivalence
  between {L}urie's model and the dendroidal model for infinity-operads}, Adv.
  Math. \textbf{302} (2016), 869--1043. \MR{3545944}

\bibitem[HK]{HaugsengKock}
Rune Haugseng and Joachim Kock, \emph{$\infty$-operads as symmetric monoidal
  $\infty$-categories}, Preprint,
  \href{https://arxiv.org/abs/2106.12975}{arXiv:2106.12975} [math.CT].

\bibitem[HM22]{HeutsMoerdijk:SDHT}
Gijs Heuts and Ieke Moerdijk, \emph{Simplicial and dendroidal homotopy theory},
  Ergebnisse der Mathematik und ihrer Grenzgebiete. 3. Folge. A Series of
  Modern Surveys in Mathematics, Cham: Springer, 2022 (English).

\bibitem[HR15]{HackneyRobertson:OCP}
Philip Hackney and Marcy Robertson, \emph{On the category of props}, Appl.
  Categ. Structures \textbf{23} (2015), no.~4, 543--573. \MR{3367131}

\bibitem[HR18]{MR3792530}
\bysame, \emph{Lecture notes on infinity-properads}, 2016 {MATRIX} annals,
  MATRIX Book Ser., vol.~1, Springer, Cham, 2018, pp.~351--374. \MR{3792530}

\bibitem[HRY15]{HRYbook}
Philip Hackney, Marcy Robertson, and Donald Yau, \emph{Infinity properads and
  infinity wheeled properads}, Lecture Notes in Mathematics, vol. 2147,
  Springer, Cham, 2015. \MR{3408444}

\bibitem[HRY18]{HRYfactorizations}
\bysame, \emph{On factorizations of graphical maps}, Homology Homotopy Appl.
  \textbf{20} (2018), no.~2, 217--238. \MR{3812464}

\bibitem[HRY19]{HRY-cyclic}
\bysame, \emph{Higher cyclic operads}, Algebr. Geom. Topol. \textbf{19} (2019),
  no.~2, 863--940. \MR{3924179}

\bibitem[HRY20a]{HRY-mod1}
\bysame, \emph{A graphical category for higher modular operads}, Adv. Math.
  \textbf{365} (2020), 107044, 61. \MR{4064770}

\bibitem[HRY20b]{HRY-mod2}
\bysame, \emph{Modular operads and the nerve theorem}, Adv. Math. \textbf{370}
  (2020), 107206, 39. \MR{4099828}

\bibitem[HV02]{HinichVaintrob:COACD}
Vladimir Hinich and Arkady Vaintrob, \emph{Cyclic operads and algebra of chord
  diagrams}, Selecta Math. (N.S.) \textbf{8} (2002), no.~2, 237--282.
  \MR{1913297}

\bibitem[JK11]{JoyalKock:FGNTCSM}
Andr\'e Joyal and Joachim Kock, \emph{Feynman graphs, and nerve theorem for
  compact symmetric multicategories (extended abstract)}, Electronic Notes in
  Theoretical Computer Science \textbf{270} (2011), no.~2, 105--113.

\bibitem[KL17]{KaufmannLucas:DFC}
Ralph Kaufmann and Jason Lucas, \emph{Decorated {F}eynman categories}, J.
  Noncommut. Geom. \textbf{11} (2017), no.~4, 1437--1464. \MR{3743229}

\bibitem[KM94]{KontsevichManin:GWCQCEG}
M.~Kontsevich and Yu. Manin, \emph{Gromov-{W}itten classes, quantum cohomology,
  and enumerative geometry}, Comm. Math. Phys. \textbf{164} (1994), no.~3,
  525--562. \MR{1291244}

\bibitem[Koc11]{Kock:PFT}
Joachim Kock, \emph{Polynomial functors and trees}, Int. Math. Res. Not. IMRN
  (2011), no.~3, 609--673. \MR{2764874}

\bibitem[Koc16]{Kock:GHP}
\bysame, \emph{Graphs, hypergraphs, and properads}, Collect. Math. \textbf{67}
  (2016), no.~2, 155--190. \MR{3484016}

\bibitem[Koc22]{Kock:WGPNP}
\bysame, \emph{Whole-grain {P}etri nets and processes}, J. ACM (2022),
  \href{https://doi.org/10.1145/3559103}{doi:10.1145/3559103}.

\bibitem[KW17]{KaufmannWard:FC}
Ralph~M. Kaufmann and Benjamin~C. Ward, \emph{Feynman categories},
  Ast\'{e}risque (2017), no.~387, vii+161. \MR{3636409}

\bibitem[Lur]{Lurie:HA}
Jacob Lurie, \emph{Higher algebra}, manuscript available at
  \url{https://www.math.ias.edu/~lurie/papers/HA.pdf}.

\bibitem[Mac65]{MacLane:CA}
Saunders Mac~Lane, \emph{Categorical algebra}, Bull. Amer. Math. Soc.
  \textbf{71} (1965), 40--106. \MR{171826}

\bibitem[MM92]{MaclaneMoerdijk:SGL}
Saunders Mac~Lane and Ieke Moerdijk, \emph{Sheaves in geometry and logic: a
  first introduction to topos theory}, Universitext, Springer-Verlag, New-York,
  1992. \MR{1300636}

\bibitem[MMS09]{MarklMerkulovShadrin}
M.~Markl, S.~Merkulov, and S.~Shadrin, \emph{Wheeled {PROP}s, graph complexes
  and the master equation}, J. Pure Appl. Algebra \textbf{213} (2009), no.~4,
  496--535. \MR{2483835}

\bibitem[Moe10]{Moerdijk:LDS}
Ieke Moerdijk, \emph{Lectures on dendroidal sets}, Simplicial methods for
  operads and algebraic geometry, Adv. Courses Math. CRM Barcelona,
  Birkh\"auser/Springer Basel AG, Basel, 2010, Notes written by Javier J.
  Guti\'errez, pp.~1--118. \MR{2778589}

\bibitem[MW07]{MoerdijkWeiss:DS}
Ieke Moerdijk and Ittay Weiss, \emph{Dendroidal sets}, Algebr. Geom. Topol.
  \textbf{7} (2007), 1441--1470. \MR{2366165 (2009d:55014)}

\bibitem[Ray21]{Raynor:DLCSM}
Sophie Raynor, \emph{Graphical combinatorics and a distributive law for modular
  operads}, Adv. Math. \textbf{392} (2021), Paper No. 108011, 87. \MR{4316667}

\bibitem[Rez01]{Rezk:MHTHT}
Charles Rezk, \emph{A model for the homotopy theory of homotopy theory}, Trans.
  Amer. Math. Soc. \textbf{353} (2001), no.~3, 973--1007. \MR{1804411}

\bibitem[Shu20]{Shulman:2Chu}
Michael Shulman, \emph{The 2-{C}hu-{D}ialectica construction and the
  polycategory of multivariable adjunctions}, Theory Appl. Categ. \textbf{35}
  (2020), Paper No. 4, 89--136. \MR{4063570}

\bibitem[Spi13]{Spivak:OWD}
David~I. Spivak, \emph{The operad of wiring diagrams: formalizing a graphical
  language for databases, recursion, and plug-and-play circuits}, Preprint,
  \href{https://arxiv.org/abs/1305.0297}{arXiv:1305.0297} [cs.DB], 2013.

\bibitem[Val07]{Vallette:KDP}
Bruno Vallette, \emph{A {K}oszul duality for {PROP}s}, Trans. Amer. Math. Soc.
  \textbf{359} (2007), no.~10, 4865--4943. \MR{2320654 (2008e:18020)}

\bibitem[VSL15]{VagnerSpivakLerman:AODSOWD}
Dmitry Vagner, David~I. Spivak, and Eugene Lerman, \emph{Algebras of open
  dynamical systems on the operad of wiring diagrams}, Theory Appl. Categ.
  \textbf{30} (2015), Paper No. 51, 1793--1822. \MR{3431301}

\bibitem[Wal21]{Walde:2SSIIO}
Tashi Walde, \emph{2-{S}egal spaces as invertible infinity-operads}, Algebr.
  Geom. Topol. \textbf{21} (2021), no.~1, 211--246. \MR{4224740}

\bibitem[Web07]{Weber:F2FPRA}
Mark Weber, \emph{Familial 2-functors and parametric right adjoints}, Theory
  Appl. Categ. \textbf{18} (2007), No. 22, 665--732. \MR{2369114}

\bibitem[Wei11]{Weiss:FODS}
Ittay Weiss, \emph{From operads to dendroidal sets}, Mathematical foundations
  of quantum field theory and perturbative string theory, Proc. Sympos. Pure
  Math., vol.~83, Amer. Math. Soc., Providence, RI, 2011, pp.~31--70.
  \MR{2742425}

\bibitem[YJ15]{YauJohnson:FPAM}
Donald Yau and Mark~W. Johnson, \emph{A foundation for {PROP}s, algebras, and
  modules}, Mathematical Surveys and Monographs, vol. 203, American
  Mathematical Society, Providence, RI, 2015. \MR{3329226}

\end{thebibliography}
\end{document}